%% file: main.tex
\documentclass[10pt,oneside,reqno]{amsart}

\pdfoutput=1

\usepackage{afterpage}
\usepackage{amscd}
\usepackage{amsfonts}
\usepackage{amsmath}
\usepackage{amssymb}
\usepackage{amsthm}
\usepackage{appendix}
\usepackage{stmaryrd}

\usepackage{bbm}
\usepackage{bm}

\usepackage{calc}
\usepackage{caption}
\usepackage{changepage}
\usepackage{color}
\usepackage{comment}

\usepackage{dsfont}

\usepackage{enumitem}
\usepackage{etoolbox}
\usepackage{graphicx}

\usepackage{import}

\usepackage{mathbbol}
\usepackage{mathrsfs}
\usepackage{mathtools}

\usepackage{pifont}

\usepackage{relsize}

\usepackage{scalerel}
\usepackage{stackengine}
\usepackage{stackrel}
\usepackage{subcaption}

\usepackage{tikz} 
\usetikzlibrary{arrows,calc} 
\usepackage{transparent}

\usepackage{wasysym}
\usepackage{wrapfig}

\usepackage{xcolor}
\usepackage{xfrac}
\usepackage{xparse}

\DeclareSymbolFontAlphabet{\amsmathbb}{AMSb}% to have different mathbb -- we need them for greek letters if I'm not mistaken

\usepackage[linktocpage=true]{hyperref}

\mathtoolsset{showonlyrefs,showmanualtags}

\allowdisplaybreaks

\newcommand{\arXiv}[1]{\texttt{arXiv:\href{http://arxiv.org/abs/#1}{#1}}}

\newtheorem{theorem}{Theorem}
\newtheorem{proposition}[theorem]{Proposition}
\newtheorem{corollary}[theorem]{Corollary}
\newtheorem{lemma}[theorem]{Lemma}

\newtheorem{thmA}{Theorem}

\theoremstyle{definition}
\newtheorem{definition}[theorem]{Definition}
\newtheorem{remark}[theorem]{Remark}
\newtheorem{example}[theorem]{Example}

\numberwithin{theorem}{section}
\numberwithin{equation}{section}
\DeclareDocumentCommand{\faktor}{s m O{0.5} m O{-0.5}}{% \newfaktor[*]{#2}[#3]{#4}[#5] -> #2/#4
  \setbox0=\hbox{\ensuremath{#2}}% Store numerator
  \setbox1=\hbox{\ensuremath{\diagup}}% Store slash /
  \setbox2=\hbox{\ensuremath{#4}}% Store denominator
  \raisebox{#3\ht1}{\usebox0}% Numerator
  \mkern-5mu% Slash /
    {\rotatebox{-44}{\rule[#5\ht2]{0.4pt}{-#5\ht2+#3\ht0+\ht0}}}% tilted rule as a slash
  \mkern-4mu%
  \raisebox{#5\ht2}{\usebox2}% Denominator
}
\DeclareDocumentCommand{\doublefaktor}{s m O{0.5} m O{-0.5}}{% \newdoublefaktor[*]{#2}[#3]{#4}[#5] -> #2//=#4
  \setbox0=\hbox{\ensuremath{#2}}% Store numerator
  \setbox1=\hbox{\ensuremath{\diagup}}% Store slash /
  \setbox2=\hbox{\ensuremath{#4}}% Store denominator
  \raisebox{#3\ht1}{\usebox0}% Numerator
  \mkern-5mu% Space between numerator and double slashes
  {\rotatebox{-44}{\rule[#5\ht2]{0.4pt}{-#5\ht2+#3\ht0+\ht0}}}% First tilted rule as part of double slash
  \mkern-14mu% Space between slashes
  {\rotatebox{-44}{\rule[#5\ht2]{0.4pt}{-#5\ht2+#3\ht0+\ht0}}}% Second tilted rule as part of double slash
  \mkern-4mu% Space between slash and denominator
  \raisebox{#5\ht2}{\usebox2}% Denominator
}
\makeatletter
\newcommand{\pushright}[1]{\ifmeasuring@#1\else\omit\hfill$\displaystyle#1$\fi\ignorespaces}
\newcommand{\pushleft}[1]{\ifmeasuring@#1\else\omit$\displaystyle#1$\hfill\fi\ignorespaces}
\makeatother
\input pdf-trans
\newbox\qbox
\def\usecolor#1{\csname\string\color@#1\endcsname\space}
\newcommand\bordercolor[1]{\colsplit{1}{#1}}
\newcommand\fillcolor[1]{\colsplit{0}{#1}}
\newcommand\colsplit[2]{\colorlet{tmpcolor}{#2}\edef\tmp{\usecolor{tmpcolor}}%
  \def\tmpB{}\expandafter\colsplithelp\tmp\relax%
  \ifnum0=#1\relax\edef\fillcol{\tmpB}\else\edef\bordercol{\tmpC}\fi}
\def\colsplithelp#1#2 #3\relax{%
  \edef\tmpB{\tmpB#1#2 }%
  \ifnum `#1>`9\relax\def\tmpC{#3}\else\colsplithelp#3\relax\fi
}
\newcommand\outline[1]{\leavevmode%
  \def\maltext{#1}%
  \setbox\qbox=\hbox{\maltext}%
  \boxgs{Q q 2 Tr \thickness\space w \fillcol\space \bordercol\space}{}%
  \copy\qbox%
}
\newcommand\mathcalbb[2][1]{%
   \stackengine{0pt}{\outline{$\mathcal{#2}$}}{\kern.3pt\outline{$\mathcal{#2}$}}{O}{l}{F}{F}{L}}
\bordercolor{black}
\fillcolor{white}
\def\thickness{0.35}% TO CHANGE THICKNESS OF SHADOW
\newcommand{\GLN}{\operatorname{GL}_N}
\newcommand{\Gl}{\operatorname{GL}}
\newcommand{\Mat}{\operatorname{Mat}_N^*(\Bbbk)}

\newcommand{\sss}{\operatorname{S}(\A_{\natural})}
\newcommand{\SSS}{\amsmathbb{S}}
\newcommand{\SSym}{\operatorname{Sym_{\SSS}}}
\renewcommand{\O}{\mathcalbb{O}}

\newcommand{\tr}{\operatorname{tr}_N}
\newcommand{\1}{\widehat{1}}

\newcommand{\stimes}{\mathbin{\otimes_{\scalebox{0.6}{$\SSS$}}}}
\newcommand{\id}{\operatorname{id}}

\newcommand{\A}{\amsmathbb{A}}

\newcommand{\Der}{\operatorname{Der}}

\newcommand{\br}{\{\!\!\{-,-\}\!\!\}}

\newcommand\dsq[1]{\llbracket#1\rrbracket}

\newcommand{\kk}{\Bbbk}
\newcommand{\ZZ}{\amsmathbb{Z}}
\newcommand{\Rep}{\operatorname{Rep}}

\newcommand{\mbf}[1]{\mathbf{#1}} 
\newcommand{\mult}{\mathrm{m}} 
\title{Double Transposed Poisson Algebras}

\author{Maxime Fairon}
\address[M.F.]{Université Bourgogne Europe, CNRS, IMB UMR 5584, F-21000 Dijon, France}
\email{maxime.fairon@u-bourgogne.fr}

\author{Nikita Safonkin}
\address[N.S.]{Institute of Mathematics, Leipzig University, Augustusplatz 10, 04109 Leipzig, Germany.}
\email{safonkin.nik@gmail.com}

\begin{document}

\begin{abstract}
    We introduce \emph{double transposed Poisson algebras}, a noncommutative analogue of the transposed Poisson algebras of Bai, Bai, Guo and Wu that is compatible with the Kontsevich--Rosenberg principle. We first consider a simplified version which we call \emph{id-adapted} double transposed Poisson algebras and then explore the general definition. We prove that every such structure on a unital associative algebra $\A$ is governed by a single derivation $\A\to\A\otimes\operatorname{S}(\A/[\A,\A])$. Furthermore, this induces a $\GLN$-equivariant transposed Poisson structure on each representation algebra $\A_N=\kk[\Rep_N(\A)]$. We also define $H_0$-transposed Poisson structures, the transposed counterpart of Crawley-Boevey's $H_0$-Poisson structures, and use the trace map to obtain a transposed Poisson structure on the ring of $\GLN$-invariants $\A_N^{\GLN}$.
\end{abstract}

\maketitle

%    \setcounter{tocdepth}{2}
    %\tableofcontents

\setcounter{tocdepth}{2} 

\tableofcontents
    
    \import{}{body}

%%%%%%%%%%%%%%%%%%%%%%%%%%%%%%%%%%%%%%%%%%%%%%%%%%%%%%%%%%%%%%%%%%%%%%%%%%%
%%%%%%%%%%%%%%%%%%%%%%%%%%%%%%%%%%%%%%%%%%%%%%%%%%%%%%%%%%%%%%%%%%%%%%%%%%%
%%%%%%%%%%%%%%%%%%% BIBLIO %%%%%%%%%%%%%%%%%%%%%%%%%%%%%%%%%%%%%%%%%%%%%%%%
%%%%%%%%%%%%%%%%%%%%%%%%%%%%%%%%%%%%%%%%%%%%%%%%%%%%%%%%%%%%%%%%%%%%%%%%%%%

\end{document}

%% file: body.tex
\section{Introduction} \label{Sec:Intro}

A transposed Poisson algebra, as introduced by Bai, Bai, Guo and Wu~\cite{bai2023transposed}, carries the same data as a Poisson algebra (i.e. a commutative associative product $\cdot$ and a Lie bracket $[-,-]$ on a common vector space $A$) but its compatibility axiom is the \emph{transpose} of the Leibniz rule, in which the roles of the two operations are exchanged:
    \begin{equation}
        2z\cdot[x,y]=[z\cdot x,y]+[x,z\cdot y],\hspace{20pt} x,y,z\in A.
    \end{equation}
This transposed Leibniz rule isolates a genuinely different class of algebras from the Poisson case, with its own self-dual operad and a rich supply of polynomial identities. Its prototypical examples arise from a single derivation $D\in\Der(A)$ through the bracket $[a,b]=a\cdot D(b)-D(a)\cdot b$. On a unital algebra every transposed Poisson bracket is of this form~\cite[Thm.~6]{damas2024transposed}.

The subject has been rapidly developing. We refer the reader to the survey~\cite{damas2024transposed}
for a comprehensive account of its growing literature, and to~\cite{ferreira2021half,KK2024}
for the study of transposed Poisson algebra structures on various Lie algebras---for instance, the
construction of nontrivial such structures on the Witt algebra, on oscillator Lie algebras, and on
several families of solvable Lie algebras, together with non-existence results for the Virasoro algebra, the Schr\"odinger
algebra, and the finite-dimensional semisimple Lie algebras.

The present paper is concerned with a noncommutative analogue of transposed Poisson algebras, in the sense of the \emph{Kontsevich--Rosenberg principle}~\cite{kontsevich2000noncommutative}. This principle of noncommutative geometry states that a structure on an associative algebra $\A$ deserves to be called noncommutative-geometric only if it induces the corresponding classical structure on each representation scheme $\Rep_N(\A)$, or equivalently on the (scheme theoretic) coordinate rings $\A_N:=\kk[\Rep_N(\A)]$ for all $N\geq1$ ($\kk$ is the base field). 
In Poisson geometry this principle was first realized by Van den Bergh's \emph{double Poisson brackets}~\cite{van2008double}: a double bracket $\br$ on $\A$ induces an honest Poisson bracket on every $\A_N$. A closely related notion is Crawley-Boevey's \emph{$H_0$-Poisson structure}~\cite{crawley-boevey2011poisson}, a certain class of Lie brackets on $\A_\natural:=\A/[\A,\A]$ that induces Poisson brackets on every invariant ring $\A_N^{\GLN}$. 

Motivated by Van den Bergh's double Poisson algebras, Question~36 of the survey~\cite{damas2024transposed} asks for double transposed Poisson algebras. This question was answered in abstract terms in \cite{safonkin2025double} through the notion of a double algebra over an arbitrary operad. The purpose of the present paper is to make this answer explicit in the case of the transposed Poisson operad. Let us emphasize that our formalism is built on a unital algebra, since the representation algebra $\A_N$ is itself unital. Therefore it is not relevant for us to investigate the noncommutative analogs of transposed $\delta$-Poisson algebras, since these last commutative structures are only nontrivial on unital algebras for $\delta=\frac12$ (the ``usual'' transposed Poisson case), cf. \cite{AKB2026-delta}.

\medskip 

To describe our main results,  fix an associative unital algebra $\A$ over an algebraically closed field $\kk$ of characteristic zero. 
We start with the \emph{$\id$-adapted double transposed Poisson algebra} (Definition~\ref{Def:id-trDPA}): in this situation, $\A$ is equipped with a family of $\kk$-linear operations
\[
  \dsq{-,-}^{(r,s)}\colon\A^{\otimes r}\otimes\A^{\otimes s}\longrightarrow\A^{\otimes(r+s)}\qquad(r,s\geq1)
\]
on the tensor powers of $\A$ (where $\otimes :=\otimes_\kk$), obeying certain graded analogues of the transposed Poisson axioms: cyclic skewsymmetry, a multi-Jacobi identity, and the graded transposed Leibniz rule
\[
  2\,\gamma\otimes\dsq{\alpha,\beta}^{(r,s)}=\dsq{\gamma\otimes\alpha,\beta}^{(t+r,s)}+\tau_{(12)}^{r,t}\,\dsq{\alpha,\gamma\otimes\beta}^{(r,t+s)}
\]
for $\alpha\in \A^{\otimes r}$, $\beta \in \A^{\otimes s}$, $\gamma\in \A^{\otimes t}$, where $\tau_{(12)}^{r,t}$ swaps the tensor blocks of sizes $r$ and $t$. 
Furthermore, additional compatibility conditions invisible at the commutative level are necessary to ensure that $\id$-adapted double transposed Poisson algebras satisfy the Kontsevich--Rosenberg principle, i.e., \(\A_N\) is naturally equipped with a transposed Poisson algebra structure.  

\begin{thmA}[{Proposition~\ref{Pr:DerIdAdapt} \& Theorem~\ref{Thm:RepIdAdapt}}]
    Every $\id$-adapted double transposed Poisson algebra structure on $\A$ has the following form 
    \[
        \dsq{\alpha,\beta}^{(r,s)}=\alpha\otimes D(\beta)-D(\alpha)\otimes\beta, \qquad 
        \alpha\in \A^{\otimes r},\,\, \beta \in \A^{\otimes s}, \,\, r,s\geq 1, 
    \]
    for a unique derivation $D\in\Der(\A)$, extended by the Leibniz rule to each tensor power $\A^{\otimes s}$, $s\geq 2$. 
    
    \noindent Moreover, any such structure canonically induces, for every $N\geq1$, a $\GLN$-equivariant transposed Poisson structure on the representation algebra $\A_N$.
\end{thmA}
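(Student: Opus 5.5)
The plan has two halves: first extract the derivation from the lowest-degree operation, then show that all operations are forced by it and that the resulting structure descends to $\A_N$.

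\textbf{Step 1: define $D$ from $\dsq{-,-}^{(1,1)}$.} Because $\A$ is unital, I would set
\[
  D(a) := \dsq{1,a}^{(1,1)} \in \A^{\otimes 2}.
\]
The components of the $\id$-adapted definition should let this be contracted, or should force it to have the form $1\otimes D(a)$ with $D(a)\in\A$. So I would first check that the compatibility conditions force $\dsq{1,a}^{(1,1)}$ to lie in $1\otimes\A$.

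\textbf{Step 2: recover every bracket from $D$.} Apply the graded transposed Leibniz rule with $\alpha=1$ and $\gamma\in\A^{\otimes t}$:
\[
  2\,\gamma\otimes\dsq{1,\beta}=\dsq{\gamma\otimes 1,\beta}+\tau_{(12)}\dsq{1,\gamma\otimes\beta}.
\]
\begin{itemize}
\item The argument $\gamma\otimes 1$ should be identified with $\gamma$, using the unit compatibility built into the $\id$-adapted axioms.
\item With that identification, and with cyclic skewsymmetry, this mirrors the commutative argument of \cite[Thm.~6]{damas2024transposed}. There one shows $[x,y]=xD(y)-D(x)y$ with $D=[1,-]$.
\item Setting $\beta=1$ first gives $\dsq{1,1}=0$ by skewsymmetry.
\item Next take $\alpha=1$ and general $\beta,\gamma$. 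Comparing the two sides should yield two things: $D$ is a derivation on $\A$, and its action on tensor powers is the Leibniz extension $D(\beta_1\otimes\cdots\otimes\beta_s)=\sum_i\beta_1\otimes\cdots\otimes D(\beta_i)\otimes\cdots\otimes\beta_s$.
\item Finally, write a general $\alpha$ as $\alpha\otimes 1$ and use the Leibniz rule once more, together with skewsymmetry. This gives $\dsq{\alpha,\beta}=\alpha\otimes D(\beta)-D(\alpha)\otimes\beta$.
\end{itemize}

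\textbf{Step 3: uniqueness and the converse.} Uniqueness of $D$ is immediate, since $D$ is recovered as $\dsq{1,-}$. For the converse, I would verify that every derivation defines such a structure. Cyclic skewsymmetry and the transposed Leibniz rule are direct expansions. The multi-Jacobi identity cancels because of the terms $D(\alpha)\otimes D(\beta)$, as in the commutative case.

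\textbf{Step 4: the induced structure on $\A_N$.} I would use that $D$ induces a derivation $D_N$ of $\A_N$, defined on generators by $D_N(a_{ij})=D(a)_{ij}$; this is well defined because $D$ is a derivation. By the commutative theorem, $\{f,g\}=fD_N(g)-D_N(f)g$ is a transposed Poisson bracket on $\A_N$. It remains to match it with the bracket defined from the double operations through the matrix-entry formula. On generators the $(1,1)$ operation gives $\{a_{ij},b_{kl}\}=a_{ij}D(b)_{kl}-D(a)_{ij}b_{kl}$, which agrees with the above after using the paper's index convention for $\A^{\otimes 2}$. $\GLN$-equivariance holds because $D_N$ commutes with the $\GLN$-action on matrix entries.

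\textbf{Main obstacle.} I expect the hardest part to be the bookkeeping of the block permutations $\tau_{(12)}^{r,t}$ together with the unit identification in Step 2. The first thing I would do is work out the cases $r,s,t\le 2$ explicitly, and then induct on $r+s$.
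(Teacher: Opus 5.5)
\textbf{Gap: where the derivation property comes from.} In Step~2 you claim that ``comparing the two sides'' of the transposed Leibniz rule with $\alpha=1$ shows that $D$ is a derivation of $\A$. This inference fails. The axioms you invoke there are \eqref{Eq:Prop1c}, cyclic skewsymmetry and the unit rule, and none of them involves the product of $\A$: they only involve tensor concatenation. Mirroring the commutative argument therefore only yields a Leibniz rule with respect to $\otimes$. Write $\dsq{1,\beta}^{(1,s)}=1\otimes X(\beta)$; this is the axiom \eqref{Eq:Prop1unit}, so there is nothing to check in Step~1. The argument then shows that $X$ acts on $\A^{\otimes n}$ by the extension \eqref{Eq:Dext} of $X|_{\A}$, but says nothing about $X(bc)$ for $b,c\in\A$.

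It cannot say anything. Take any linear $X\colon\A\to\A$ with $X(1)=0$ that is not a derivation, extend it by \eqref{Eq:Dext}, and set $\dsq{\alpha,\beta}^{(r,s)}=\alpha\otimes X(\beta)-X(\alpha)\otimes\beta$. These operations satisfy every axiom of Definition~\ref{Def:id-trDPA} except the multiplication rules \eqref{Eq:Prop1M}, because the verifications of the other axioms in Proposition~\ref{Prop1} only use the tensor Leibniz property and $X(1)=0$.

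The missing ingredient is therefore \eqref{Eq:Prop1M}, the double counterpart of the relation $(ab)_{ij}=\sum_k a_{ik}b_{kj}$ in $\A_N$. The paper uses it at the very start of the proof of Proposition~\ref{Pr:DerIdAdapt}. Once you have the tensor Leibniz rule, it closes the gap in one line:
\[
1\otimes X(bc)=\dsq{1,\mult_{1,2}(b\otimes c)}^{(1,1)}=\mult_{2,3}\dsq{1,b\otimes c}^{(1,2)}=\mult_{2,3}\bigl(1\otimes X(b)\otimes c+1\otimes b\otimes X(c)\bigr)=1\otimes\bigl(X(b)c+bX(c)\bigr).
\]
Your Step~4 needs $D\in\Der(\A)$ to define $D_N$, so as written it inherits this gap.

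\textbf{Two further repairs in Step~2.}
\begin{enumerate}
\item You cannot literally identify $\gamma\otimes 1$ with $\gamma$, since they live in different tensor powers. Instead, \eqref{Eq:Prop1L} and \eqref{Eq:Prop1unit} show that all three terms of your identity carry a factor $1$ in position $t+1$. Deleting that factor gives $\dsq{\gamma,\beta}^{(t,s)}=2\gamma\otimes X(\beta)-X(\gamma\otimes\beta)$.
\item Skewsymmetry applied to this identity only shows that the defect $Y(\beta,\gamma):=X(\beta\otimes\gamma)-X(\beta)\otimes\gamma-\beta\otimes X(\gamma)$ satisfies $Y(\beta,\gamma)=-\tau_{(12)}^{t,s}Y(\gamma,\beta)$. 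You also need the permutation rule \eqref{Eq:Prop1S}. It gives $X(\beta\otimes\gamma)=\tau_{(12)}^{t,s}X(\gamma\otimes\beta)$, hence the opposite sign, so $Y=0$ and then $\dsq{\gamma,\beta}=\gamma\otimes X(\beta)-X(\gamma)\otimes\beta$.
\end{enumerate}

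\textbf{Comparison with the paper once repaired.} With these fixes your route is sound and somewhat more direct than the paper's. The paper computes $\dsq{1,\beta\gamma}^{(1,1)}$ in two ways, inducts on $r$, and recovers $\dsq{\alpha,\beta}$ by summing expressions for $\mult_{1,2}\dsq{1\otimes\alpha,\beta}$ and $\mult_{r+s,r+s+1}\dsq{\alpha,\beta\otimes 1}$. Your argument is essentially the proof of Theorem~\ref{Thm:DerDtPA} transplanted to tensor powers.

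In Step~4, compare the two brackets on all spanning elements $\alpha_{\mbf i\mbf j}$, not only on generators, because a transposed Poisson bracket is not a biderivation. This comparison is immediate from the full formula. Unlike the paper's direct proof of Theorem~\ref{Thm:RepIdAdapt}, your route then needs no separate well-definedness check for \eqref{Eq:concis}.
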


The general definition is more involved and is most naturally formulated in terms of a certain symmetric algebra attached to $\A$ rather than its tensor powers. Set $\A_\natural=\A/[\A,\A]$. The part played by $\A_N$ in the commutative theory is taken over, at the noncommutative level, by the \emph{double coordinate ring}
\[
  \O(\A):=\SSym\big(\A_\natural\oplus\A[-1]\big)=\bigoplus_{n\geq0}\A^{\otimes n}\otimes\sss\otimes\kk[S_n],
\]
where $\sss$ is the ordinary symmetric algebra of the vector space $\A_\natural$, and $\SSym$ denotes the symmetric algebra in the category of diagonal $\SSS$-bimodules (cf. Subsection \ref{ss:Prelim}).
This graded associative algebra first appeared, under the name \emph{Fock algebra}, in the work of Ginzburg and Schedler \cite{ginzburg2010differentialoperatorsbvstructures}. 
Moreover, $\O(\A)$ plays a key role in the approach of the second author \cite{safonkin2025double} for inverting the Kontsevich-Rosenberg principle. 

A \emph{double transposed Poisson algebra} (Definition~\ref{Def:trDPA}) is then an associative unital algebra $\A$ together with an $\SSS$-bimodule map $\dsq{-,-}\colon\O(\A)\stimes\O(\A)\longrightarrow\O(\A)$ that is cyclically twisted-skewsymmetric, satisfies the twisted-Jacobi identity, obeys the following twisted-transposed Leibniz rule
\[
    2\gamma\,\dsq{\alpha,\beta}=\dsq{\gamma\alpha,\beta}+\operatorname{Ad}\!\big((12)^{|\alpha|,|\gamma|}\big)\,\dsq{\alpha,\gamma\beta},\qquad\alpha,\beta,\gamma\in\O(\A),
\]
and satisfies additional compatibility conditions required to induce operations on each $\A_N$.

\begin{thmA}[{Theorem~\ref{Thm:DerDtPA} \& Propositions~\ref{Pr:dtPA-induces-tPA-N}, \ref{Pr:DerOA-delta}, \ref{Pr:dtPA-id-adapted}}]
    Every double transposed Poisson bracket on $\A$ has the following form
    \[
        \dsq{\alpha,\beta}=\alpha\,D(\beta)-D(\alpha)\,\beta,\qquad\alpha,\beta\in\O(\A),
    \]
    for a unique derivation $D$ of $\O(\A)$ that is also an $\SSS$-bimodule map satisfying the compatibility conditions; these are in bijection with ordinary derivations $\delta\colon\A\to\A\otimes\sss$, and the double transposed Poisson bracket restricts to an $\id$-adapted double transposed Poisson bracket precisely when $\delta$ takes values in $\A\otimes\kk\mathds{1}$.

\noindent    Moreover, for every $N\geq1$, a double transposed Poisson algebra $\A$ induces a canonical $\GLN$-equivariant transposed Poisson structure on the representation algebra $\A_N$. 
\end{thmA}

Finally, we develop the transposed analogue of Crawley-Boevey's $H_0$-Poisson structures. An \emph{$H_0$-transposed Poisson structure} (Definition~\ref{Def:trH0P}) is a transposed Poisson algebra structure $\{-,-\}$ on $\sss$ such that, for every $a\in\A$, there exists an element $d_a\in(\Der(\A)\oplus\kk\cdot\id_\A)\otimes\sss$ with $\{\overline a,\overline b\}=\pi(d_a(b))$ for all $b\in\A$, where $\overline a\in\A_\natural$ denotes the class of $a$ and $\pi\colon\A\otimes\sss\to\sss$, $x\otimes f\mapsto\overline x\cdot f$. 

\begin{thmA}[{Theorem~\ref{Pr:H0P-trace-tPA} \& Propositions~\ref{Thm:0-deg-is-H0P}, \ref{Pr:dtPA-N-matches-trace}}]
    Any $H_0$-transposed Poisson structure $\{-,-\}$ on $\A$ induces, for every $N\geq1$, a transposed Poisson structure on the ring of invariants $\A_N^{\GLN}$. The restriction of any double transposed Poisson bracket on $\O(\A)$ to its degree-zero component $\sss=\O(\A)^{(0)}$ is an $H_0$-transposed Poisson structure, and the resulting bracket on $\A_N^{\GLN}$ coincides with the restriction to $\A_N^{\GLN}$ of the $\GLN$-equivariant transposed Poisson structure that $\A$ induces on $\A_N$.
\end{thmA}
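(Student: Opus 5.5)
The statement has three parts, and I would prove them in the order in which they build on each other. First, transfer an $H_0$-transposed Poisson structure to $\A_N^{\GLN}$. Second, show that the degree-zero restriction of a double transposed Poisson bracket is such a structure. Third, compare the two brackets on invariants.

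\emph{Step 1 (trace construction).} Use the trace map $\tr\colon\A_\natural\to\A_N^{\GLN}$, $\overline a\mapsto\operatorname{tr}(X(a))$, where $X(a)$ is the matrix of coordinate functions. It extends to an algebra map $\sss\to\A_N^{\GLN}$. Since $\kk$ has characteristic zero, this map is surjective by Procesi's first fundamental theorem. I would define
\[
\{\tr(\overline a),\tr(\overline b)\}_N:=\tr\big(\{\overline a,\overline b\}\big)
\]
and then extend it to products of traces. The extension is dictated by the transposed Leibniz rule, using the representation $d_a$. Concretely, each $d_a\in(\Der(\A)\oplus\kk\,\id_\A)\otimes\sss$ induces, after applying $\tr$ to the $\sss$-factor, an element of $(\Der(\A_N)^{\GLN}\oplus\kk\,\id)\otimes\A_N^{\GLN}$, and $\tr(\pi(d_a(b)))$ is computed from it.

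The main obstacle is well-definedness, because $\tr$ has a kernel (the trace identities). My approach would be to avoid ill-definedness altogether by producing the bracket from a single derivation. The analogue of \cite[Thm.~6]{damas2024transposed} applied to the unital algebra $\sss$ gives $\{f,g\}=fD(g)-D(f)g$ for a derivation $D$ of $\sss$. I would then show, using the $d_a$, that $D(\overline b)=\pi(d_{1}(b))$ up to normalisation, by taking $a=1$ and writing $\{\overline 1,\overline b\}=D(\overline b)-D(\overline 1)\,\overline b$. This gives $D$ an $\A$-level lift $\tilde d$, namely a derivation $\A\to\A\otimes\sss$ plus a multiple of the identity, so $D$ is induced by a $\GLN$-equivariant derivation of $\A_N$ that preserves invariants. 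The bracket on $\A_N^{\GLN}$ is then $fD_N(g)-D_N(f)g$. This is automatically well defined and transposed Poisson, because this formula always yields one, and it satisfies the defining identity on traces.

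\emph{Step 2 (degree zero).} By Theorem~\ref{Thm:DerDtPA}, $\dsq{\alpha,\beta}=\alpha D(\beta)-D(\alpha)\beta$. The derivation $D$ preserves degree, so it restricts to $\O(\A)^{(0)}=\sss$, and the restricted bracket is transposed Poisson. For $d_a$, I would use the corresponding $\delta\colon\A\to\A\otimes\sss$ from Proposition~\ref{Pr:DerOA-delta}. Computing $\{\overline a,\overline b\}=\overline a\,D(\overline b)-D(\overline a)\,\overline b$ and noting $D(\overline b)=\pi(\delta(b))$, I would set
\[
d_a:=(\overline a\,\delta)-D(\overline a)\,\id_\A,
\]
which lies in $(\Der(\A)\oplus\kk\,\id_\A)\otimes\sss$ and gives the required identity.

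\emph{Step 3 (comparison).} By Proposition~\ref{Pr:dtPA-induces-tPA-N}, the bracket on $\A_N$ is $fD_N(g)-D_N(f)g$, where $D_N$ is induced by $\delta$ and $\GLN$-equivariant. It therefore preserves $\A_N^{\GLN}$, and on $\tr(\overline b)$ it equals $\tr(\pi(\delta(b)))$. That coincides with the derivation used in Step 1 on generators, so the two brackets agree on all of $\A_N^{\GLN}$, since both are of the form $fD(g)-D(f)g$ with derivations agreeing on generators.
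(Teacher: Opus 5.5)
Your Steps 2 and 3 follow the paper; your witness $d_a=\overline a\,\delta-D(\overline a)\,\id_\A$ is exactly the paper's, since $D(\overline a)=\pi(\delta(a))$. Step 1, however, has a genuine gap, and it sits exactly at the well-definedness issue you flagged.

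First, $\overline 1=\overline{1_\A}$ is a degree-one element of $\sss$, not its unit $\mathds{1}$. The correct identity is therefore $\pi(d_{1_\A}(b))=\{\overline{1_\A},\overline b\}=\overline{1_\A}\,D(\overline b)-D(\overline{1_\A})\,\overline b$. You cannot isolate $D(\overline b)$ in $\sss$; you can only do so after applying $\tr$, where $\overline{1_\A}\mapsto N$.

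Second, and more seriously, your lift is a derivation plus an $\id_\A$-part. On $\A_N$ this becomes a derivation plus multiplication by an invariant, which is not a derivation. Your conclusion that $D$ is induced by a $\GLN$-equivariant derivation of $\A_N$ is in fact false in general. Take $D$ to be the Euler derivation of $\sss$, $D(\overline a)=\overline a$. Then $\{\overline a,\overline b\}=0$ on generators, so this is an $H_0$-transposed Poisson structure with $d_a=0$. Yet any derivation $\Delta$ of $\A_N^{\GLN}$ with $\Delta(\tr(\overline b))=\tr(D(\overline b))=\tr(\overline b)$ would give $0=\Delta(N)=\Delta(\tr(\overline{1_\A}))=N$, which is absurd.

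The missing step is the paper's normalisation. Write $d_{1_\A}=\sum_iE_i\otimes f_i+\id_\A\otimes\kappa_{\sss}$ and set $\widehat{d_{1_\A}}=\sum_i(E_i)_N\,\tr(f_i)+\tr(\kappa_{\sss})\cdot(-)$, with no extra identity term added. Tracing the identity above gives $\widehat{d_{1_\A}}(\tr(\overline b))=N\,\tr(D(\overline b))-\tr(D(\overline{1_\A}))\,\tr(\overline b)$. At $b=1_\A$ the right side vanishes, while the left side equals $N\,\tr(\kappa_{\sss})$. Hence $\tr(\kappa_{\sss})=0$, and $\widehat{d_{1_\A}}$ is an honest equivariant derivation.

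The derivation on invariants is then $E=\frac1N\widehat{d_{1_\A}}$. It satisfies $E(\tr(\overline b))=\tr(D(\overline b))-c\,\tr(\overline b)$ with $c=\frac1N\tr(D(\overline{1_\A}))$, not $E(\tr(\overline b))=\tr(D(\overline b))$. The shift by $c$ cancels in $xE(y)-E(x)y$ on pairs of traces, so the defining formula still holds. Alternatively, you could keep your lift $L=\Delta+\mu\cdot(-)$ and observe that $xL(y)-L(x)y=x\Delta(y)-\Delta(x)y$ in a commutative algebra. But that observation must be made explicitly, since it, not the false claim that $L$ is a derivation, is what makes the bracket transposed Poisson.

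The same correction affects Step 3. The Step-1 derivation agrees with $D_N$ on $\tr(\overline b)$ only because $c=0$ for a double bracket, which holds since $D(\overline{1_\A})=\pi(\delta(1_\A))=0$. Alternatively, compare $\{1,-\}$ through $1=\frac1N\tr(\overline{1_\A})$, as the paper does. A minor point on Step 2: you need $\A$ finitely generated to regard $\overline a\,\delta$ as an element of $\Der(\A)\otimes\sss$.
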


The relations between the different structures involved in Theorem C can be summarized as follows: 
\begin{center}
      \begin{tikzpicture}
%%%% top row BACK
 \node   (T0) at (-3,1) {$(\A,\dsq{-,-})$};
 \node   (T1) at (3,1) {$(\A_N,[-,-]_N)$};
 \node  (B0) at (-3,-2.5) {$(\sss,\{-,-\})$};
 \node  (B1) at (3,-0.7) {$(\A_N^{\GLN},\{-,-\}_N^{\mathrm{ind}})$};
 \node  (B2) at (3,-2.5) {$(\A_N^{\GLN},\{-,-\}_N^{\mathrm{tr}})$};
% \node (Eq) at (3,-1.5) {$\mathrel{\rotatebox{90}{$=$}}$}; 
%\draw [double equal sign distance] (B1)  to node[right,font=\small] {Prop.~\ref{Pr:dtPA-N-matches-trace}} (B2);
\path[->,>=angle 90,font=\small]  
   (T0) edge node[above] {Prop.~\ref{Pr:dtPA-induces-tPA-N}} (T1) ;
\path[->,>=angle 90,font=\small]  
   (B0) edge node[below] {Thm.~\ref{Pr:H0P-trace-tPA}} (B2) ;
\path[->,>=angle 90,font=\small]  
   (T0) edge node[left] {Thm.~\ref{Thm:0-deg-is-H0P}} (B0) ;
\path[->,>=angle 90,font=\small]  
   (T1) edge node[right] {Prop.~\ref{Pr:tPA-Red}} (B1) ;
\path[-,thick,font=\small]  
   (B1) edge node[right] {Prop.~\ref{Pr:dtPA-N-matches-trace}} (B2) ;
\path[  -,thick,  transform canvas={xshift=-0.15cm}]
(B1) edge (B2);
   \end{tikzpicture}
\end{center} 

\medskip 

\textbf{Outline of the paper.} Section~\ref{Sec:Action} recalls the definition of transposed Poisson algebras and discusses group actions on them. 
Section~\ref{Sec:Motiv} develops the special case of $\id$-adapted double transposed Poisson algebras in a self-contained manner and shows that these induce transposed Poisson brackets on every $\A_N$. 
Section~\ref{Sec:Gen} contains the general definition of double transposed Poisson algebras: after recalling the diagonal $\SSS$-bimodule formalism needed to handle the double coordinate ring $\O(\A)$, we present the general Definition~\ref{Def:trDPA} and show subsequently how these structures induce transposed Poisson brackets on the representation algebras $\A_N$ and how they are governed by a single derivation. We also identify the $\id$-adapted structures inside the general ones. 
Finally, we introduce $H_0$-transposed Poisson structures (Subsection~\ref{Sec:0-degree}), which can be carried to the invariant rings $\A_N^{\GLN}$ by the trace map, and which can be obtained by restriction to $\sss$ of double transposed Poisson algebra structures.

\textbf{Acknowledgements.} Nikita Safonkin was partially supported by the European Research Council (ERC) under Grant Agreement No.~101041499. 
The work of Maxime Fairon was carried out at IMB which receives support from the EIPHI Graduate School (ANR-17-EURE-0002).

%%%%%%%%%% NEW SECTION %%%%%%%%%
%%%%%%%%%% NEW SECTION %%%%%%%%%
%%%%%%%%%% NEW SECTION %%%%%%%%%
%%%%%%%%%% NEW SECTION %%%%%%%%%
%%%%%%%%%% NEW SECTION %%%%%%%%%
%%%%%%%%%% NEW SECTION %%%%%%%%%
\section{Action and reduction of transposed Poisson algebras} \label{Sec:Action}

We first recall, following~\cite{bai2023transposed}, the basic notions of the commutative theory.

\begin{definition}[\cite{bai2023transposed}] \label{Def:trPA}
    A transposed Poisson algebra is a triple $(A,\cdot,[-,-])$, abbreviated $A$,  where $(A,\cdot)$  is a commutative associative algebra equipped with a Lie algebra structure $[-,-]:A\times A\longrightarrow A$ such that
    \begin{equation} \label{Eq:trPA}
        2z\cdot[x,y]=[z\cdot x,y]+[x,z\cdot y],\hspace{20pt} x,y,z\in A.
    \end{equation}
\end{definition}

\begin{example}[\cite{bai2023transposed}] \label{Ex:baiDer}
 If $A$ is a commutative associative algebra and $D\in \Der(A)$ is a derivation, then 
 \begin{equation} \label{Eq:baiDer}
    [a,b]_D= a\cdot D(b) - D(a) \cdot b , \qquad \forall a,b\in A 
 \end{equation}
defines a transposed Poisson structure on $A$. 
\end{example}

\begin{proposition}[\cite{damas2024transposed},Theorem~6] \label{Pr:tPA-unit}
    If $A$ is a unital commutative associative algebra and $[-,-]$ defines a structure of transposed Poisson algebra, then the Lie bracket is of the form \eqref{Eq:baiDer} with $D=[1,-] \in \Der(A)$.
\end{proposition}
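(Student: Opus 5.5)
Set $D:=[1,-]$ and use the transposed Leibniz rule \eqref{Eq:trPA} with one of the entries equal to the unit $1$.

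\emph{Step 1: the bracket is determined by $D$.} Put $x=1$ and $y=b$, $z=a$ in \eqref{Eq:trPA}:
\[
2a\cdot[1,b]=[a,b]+[1,a\cdot b].
\]
Hence
\[
[a,b]=2a\cdot D(b)-D(ab). \qquad (\ast)
\]
Swapping $a$ and $b$ and using skewsymmetry and commutativity gives
\[
-[a,b]=2b\cdot D(a)-D(ab).
\]
Subtracting the two identities yields
\[
2[a,b]=2a\cdot D(b)-2D(a)\cdot b.
\]
Since the characteristic is $\neq2$, this is exactly \eqref{Eq:baiDer}.

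\emph{Step 2: $D$ is a derivation.} Adding the two identities of Step 1 instead gives
\[
0=2a\cdot D(b)+2b\cdot D(a)-2D(ab),
\]
so $D(ab)=a\cdot D(b)+D(a)\cdot b$. As a sanity check, $D(1)=[1,1]=0$ is consistent with this.

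The only point needing care is the division by $2$, which is available because we work in characteristic zero. Notably, the Jacobi identity is never used: the transposed Leibniz rule together with the unit already forces both the form of the bracket and the derivation property. I expect no real obstacle beyond making this choice of specialization.
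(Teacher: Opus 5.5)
Your argument is correct and complete: specializing \eqref{Eq:trPA} at $x=1$ gives one identity, and swapping the entries via skewsymmetry and commutativity gives a second. Taking their difference and their sum yields \eqref{Eq:baiDer} and the Leibniz rule for $D=[1,-]$, and only division by $2$ is needed. The paper cites this result rather than reproving it, but its proof of Theorem~\ref{Thm:DerDtPA} carries out exactly your computation in the double setting (set $\alpha=\mathds{1}_{\O(\A)}$ in \eqref{f2}, swap via \eqref{f1}, and combine), so your approach is essentially the paper's.
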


We can consider group actions on transposed Poisson algebras. 

\begin{definition}
Let $A$ be a transposed Poisson algebra, $G$ be a group, 
and $G\times A \to A$, $(g,a)\mapsto g\ast a$ be a left action for the algebra structure $(A,\cdot)$. 
The transposed Poisson structure is \emph{$G$-equivariant} if 
\begin{equation} \label{Act-tPA}
    g\ast [a,b]=[g\ast a,g\ast b], \qquad \forall g\in G, \,\, a,b \in A.
\end{equation}
\end{definition}

\begin{proposition} \label{Pr:tPA-Red}
If $A$ admits a $G$-equivariant transposed Poisson structure, then the invariant subalgebra $A^G$ inherits a transposed Poisson structure. 
\end{proposition}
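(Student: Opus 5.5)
The plan is to show that $A^G$ is closed under both operations and then let the axioms descend by restriction. Here $A^G=\{a\in A \mid g\ast a=a \ \forall g\in G\}$.

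First, $A^G$ is a subalgebra for the commutative product. Since $G$ acts by algebra automorphisms of $(A,\cdot)$, for $a,b\in A^G$ we have $g\ast(a\cdot b)=(g\ast a)\cdot(g\ast b)=a\cdot b$. Linearity of each map $a\mapsto g\ast a$ makes $A^G$ a linear subspace. If $A$ is unital, the unit is also fixed by every automorphism, so it lies in $A^G$; this point is not needed for the statement.

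Second, $A^G$ is closed under the bracket, and this is exactly where the equivariance condition \eqref{Act-tPA} enters. For $a,b\in A^G$ and $g\in G$,
\[
g\ast[a,b]=[g\ast a,g\ast b]=[a,b],
\]
so $[a,b]\in A^G$.

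Third, the product and bracket restricted to $A^G$ are still commutative associative, respectively a Lie bracket. The transposed Leibniz rule \eqref{Eq:trPA} also holds there, because these are identities among elements of $A$ that hold for all $x,y,z\in A$, hence in particular for $x,y,z\in A^G$. Therefore $(A^G,\cdot,[-,-])$ is a transposed Poisson algebra.

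There is no real obstacle here. The only step that uses a hypothesis specific to the situation is the bracket-closure step, which is immediate from \eqref{Act-tPA}. It is worth remarking that in the unital case the induced bracket is again of the form \eqref{Eq:baiDer}, with derivation $D=[1,-]$ restricted to $A^G$, by Proposition~\ref{Pr:tPA-unit}. This is consistent because $D$ commutes with the action: $g\ast[1,a]=[g\ast 1,g\ast a]=[1,g\ast a]$, so $D$ preserves $A^G$.
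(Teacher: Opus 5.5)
Your proposal is correct and follows the paper's proof. In both, $A^G$ is closed under the product because $G$ acts by algebra automorphisms, closed under the bracket by \eqref{Act-tPA}, and the axioms, including \eqref{Eq:trPA}, hold on $A^G$ by restriction along the inclusion $A^G\hookrightarrow A$.
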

\begin{proof}
   It is clear that $A^G$ inherits the multiplication of $A$. For its Lie bracket, note that if $f_1,f_2\in A^G$ then for any $g\in G$,  $g\ast [f_1,f_2] = [f_1,f_2]$ by \eqref{Act-tPA}. Hence the Lie bracket $[-,-]$ restricts to $A^G$. The defining condition \eqref{Eq:trPA} is satisfied on $A^G$ since we have an inclusion $A^G \hookrightarrow A$. 
\end{proof}
Note that when a transposed Poisson structure is $G$-equivariant, it is automatically $H$-equivariant for any subgroup $H$ of $G$.

\begin{example}
    Let $A$ be equipped with a transposed Poisson structure $[-,-]_D$ defined from some $D\in \Der(A)$ as in Example \ref{Ex:baiDer}. If a group $G$ acts on $A$ and $D$ is $G$-equivariant 
    (i.e. $D(g\ast a)=g\ast D(a)$ for all $g\in G$ and $a\in A$), then the transposed Poisson structure given by $[-,-]_D$ is $G$-equivariant due to \eqref{Eq:baiDer}. 
    By Proposition \ref{Pr:tPA-Red}, $A^G$ inherits a transposed Poisson structure which is defined by the restriction $D\big|_{A^G}\in \Der(A^G)$ of $D$. 
\end{example}

\begin{example}
For $n\geq 1$ and $\underline{\lambda}\in (\kk^\times)^n$, the oscillator algebra 
$\mathcal{L}^{(n)}_{\underline{\lambda}}$ is the $(2n+2)$-dimensional vector space with basis 
$\{e_-,e_0,e_j,\check e_j\}_{j=1}^n$ equipped with the Lie bracket whose only nonzero relations are obtained from 
\begin{equation}
    [e_-,e_j] = \lambda_j \check e_j, \quad 
    [e_-,\check e_j] = -\lambda_j e_j, \quad 
    [e_j,\check e_j] = e_0, \quad j=1,\ldots,n.
\end{equation}
It is shown in \cite{KK2024} that $\mathcal{L}^{(n)}_{\underline{\lambda}}$ can be equipped with a transposed Poisson structure by considering the following multiplication for arbitrary 
$\gamma,\mu\in \kk$ and $\underline{\alpha},\underline{\beta}\in \kk^n$: 
\begin{align}
   e_-\cdot e_- &= \gamma e_- + \mu e_0 + 2\sum_{k=1}^n \lambda_k (\alpha_k e_k + \beta_k \check e_k), \\
   e_-\cdot e_0 &= \gamma e_0, \quad 
   e_-\cdot e_j = \alpha_j e_0 + \gamma e_j, \quad 
   e_-\cdot \check e_j = \beta_j e_0 + \gamma \check e_j, \\
   e_j\cdot e_j &= \check e_j\cdot \check e_j = -\frac{\gamma}{2\lambda_j}e_0\,.
\end{align}
The multiplication is unital if and only if $\underline{\alpha},\underline{\beta}$ are the zero vector and $\gamma\neq 0$, in which case the unit is $\frac{1}{\gamma}e_--\frac{\mu}{\gamma^2}e_0$.

There is an action of the group $\ZZ_2$ on $\mathcal{L}^{(n)}_{\underline{\lambda}}$ (seen first as a vector space) where its generator acts through 
$e_j \leftrightarrow \check e_j$, $e_-\to -e_-$, $e_0\to -e_0$.  
The above transposed Poisson structure is $\ZZ_2$-equivariant provided that 
$\gamma=\mu=0$ and $\alpha_j=\beta_j$ for all $j$. 
We get a transposed Poisson structure using Proposition \ref{Pr:tPA-Red} on 
$(\mathcal{L}^{(n)}_{\underline{\lambda}})^{\ZZ_2}=\oplus_j \kk (e_j+\check e_j)$ where both the Lie bracket and the multiplication are trivially zero. 

\noindent There is also an action of the group $\ZZ_n$, where a generator is required to act by 
$e_j \to e_{j+1}$, $\check e_j \to \check e_{j+1}$ while it leaves $e_-,e_0$ invariant. 
The above transposed Poisson structure is $\ZZ_n$-equivariant provided that 
all $\lambda_j$ are equal, and similarly for the components of $\underline{\alpha},\underline{\beta}$.
We then get a transposed Poisson structure using Proposition \ref{Pr:tPA-Red} on 
$(\mathcal{L}^{(n)}_{\underline{\lambda}})^{\ZZ_n}\simeq \mathcal{L}^{(1)}_{\lambda_1}$ for the following parameters in the multiplication: $(\gamma,\mu,\sqrt{n}\alpha_1,\sqrt{n}\beta_1)$. 
%% In this second identification, the new e_1 is \frac{1}{\sqrt{n}}\sum_j e_j
\end{example}

%%%%%%%%%% NEW SECTION %%%%%%%%%
%%%%%%%%%% NEW SECTION %%%%%%%%%
%%%%%%%%%% NEW SECTION %%%%%%%%%
%%%%%%%%%% NEW SECTION %%%%%%%%%
%%%%%%%%%% NEW SECTION %%%%%%%%%
%%%%%%%%%% NEW SECTION %%%%%%%%%
\section{Motivation and first definition} \label{Sec:Motiv}

In this section, we build a first definition of double transposed Poisson algebras based on the construction using derivations from Example \ref{Ex:baiDer}. 
It is meant to be pedagogical and inspired by \cite{van2008double}, so that the reader can have Definition \ref{Def:id-trDPA} in mind before encountering the general formalism developed in the next section. 

\subsection{Notation and basic constructions} \label{ss:MotivNot}

From now on, $\A$ is an associative unital algebra $\A$ over an algebraically closed field $\kk$ of characteristic zero. The multiplication $\mult : \A\otimes \A\to \A$ is simply denoted by concatenation. 
For any $r\geq 2$, the $r$-th tensor power (over $\kk$) $\A^{\otimes r}$ is naturally equipped with an associative multiplication given by factorwise multiplication. 
We also define for any $1\leq u<v\leq r$ the map 
$\mult_{u,v}:  \A^{\otimes r}\to \A^{\otimes (r-1)}$ by 
\begin{equation}  \label{Eq:mult-uv}
 \mult_{u,v}(a^1 \otimes \cdots \otimes a^r)
= a^1 \otimes \cdots \otimes a^{u-1}\otimes a^{u+1} \otimes \cdots \otimes a^{v-1} \otimes  a^u a^v \otimes a^{v+1}\otimes \cdots \otimes a^r\,,
\end{equation}
which reduces the number of tensor factors; 
we can increase the number of tensor factors from $\A^{\otimes r}$ to $\A^{\otimes (r+1)}$ for any $1\leq \rho \leq r$ through
\[
b\otimes_\rho (a^1 \otimes \cdots \otimes a^r)
:= a^1 \otimes \cdots \otimes a^{\rho} \otimes b \otimes a^{\rho+1} \otimes \cdots \otimes a^r\,.
\]
We also set $b\otimes_0 (a^1 \otimes \cdots \otimes a^r):=b\otimes a^1 \otimes \cdots \otimes a^r$. 

A derivation is an element of 
$\Der(\A)=\{D\in \operatorname{End}_\kk(\A) \mid D(ab)=a D(b) + D(a) b\}$. 
Given $D\in \Der(\A)$ and $r\geq 1$, we get a derivation of $\A^{\otimes r}$ which we also denote as $D$ and is given by 
\begin{equation} \label{Eq:Dext}
  D(a^1 \otimes \cdots \otimes a^r)  
=\sum_{1\leq s\leq r} a^1 \otimes \cdots \otimes   D(a^s) \otimes \cdots \otimes a^r\,.
\end{equation}
We form the graded vector space $\mbf T \A:=\bigoplus_{r\geq 0}\A^{\otimes r}$ where, for a homogeneous element $\alpha \in \A^{\otimes r}\subset \mbf T\A$, we let $|\alpha|=r$ denote its degree.  
Tensor multiplication is compatible with the grading since 
$|\alpha \otimes \beta|=|\alpha|+|\beta|$. 
We shall also consider the following actions of the symmetric groups $S_2,S_3$ for $\alpha,\beta,\gamma\in \mbf T\A$ of respective degrees $r,s,t\geq 1$: 
\begin{align*}
    &\tau_{(12)}^{r,s}(\alpha\otimes \beta)=\beta \otimes \alpha, \quad
    \tau_{(12)}^{r,s}(\alpha\otimes \beta\otimes \gamma)=\beta \otimes \alpha \otimes \gamma, \quad
    \tau_{(123)}^{r,s,t}(\alpha\otimes \beta\otimes \gamma)=\gamma \otimes \alpha \otimes \beta ,
\end{align*}
and we define similarly the other permutations. 

\medskip 

For an integer $N\geq 1$, the $N$-th representation algebra of $\A$ is the (scheme-theoretic) coordinate ring $\A_N=\kk[\Rep_N(\A)]$ of its $N$-th representation space. 
It can be described as the associative commutative unital algebra generated by elements $a_{ij}$ with $a\in \A$, $1\leq i,j\leq N$, subject to 
\begin{equation} \label{Eq:AN-def}
(\lambda a+\mu b)_{ij} = \lambda \, a_{ij} + \mu\, b_{ij}, \quad 
1_{ij}=\delta_{ij}, \quad 
(ab)_{ij}=\sum_{k=1}^N a_{ik}b_{kj}
\end{equation}
for $a,b\in \A$, $\lambda,\mu \in \kk$ and $1\leq i,j\leq N$. 
(In the middle equality, we use Kronecker's delta function.)
As a vector space, $\A_N$ is spanned by elements of the form 
\begin{equation} \label{Eq:spanAN}
    \alpha_{\mbf i \mbf j} := a_{i_1 j_1}^1 \, a_{i_2 j_2}^2 \cdots a_{i_r j_r}^r
\end{equation}
where $r\geq 1$, $\alpha=a^1 \otimes \cdots \otimes a^r \in \A^{\otimes r}$, and 
$\mbf i=(i_1,\ldots,i_r)$, $\mbf j=(j_1,\ldots,j_r)$ belong to $\{1,\ldots,N\}^{\times r}$. 
It is clear how the first two relations in \eqref{Eq:AN-def} can be written for an element of the form \eqref{Eq:spanAN}. 
To do so for the last relation in \eqref{Eq:AN-def}, consider 
multi-indices  $(\mbf i',\mbf j')$ of length $r-1$. Fix some $1\leq u  \leq r-1$, and for $o\in \{1,\ldots,N\}$ introduce the multi-indices $(\mbf i'_{o},\mbf j'_{o})$ where $o$ is added as an index to $\mbf i'$ in position $u+1$ and to $\mbf j'$ in position $u$, i.e. 
\begin{equation} \label{Eq:iojo}
    \mbf i'_{o}:=(i'_1,\ldots,i_u',o,i'_{u+1},\ldots,i'_{r-1}), \quad 
\mbf j'_{o}:=(j'_1,\ldots,j_{u-1}',o,j'_u,\ldots,i'_{r-1}).
\end{equation}
Then, one has the relation 
\begin{equation} \label{Eq:iojo2}
    (\mult_{u,u+1}\alpha)_{\mbf i',\mbf j'} 
= \sum_{1\leq o\leq N} \alpha_{\mbf i'_o \mbf j'_o} \,.
\end{equation}
Finally, note that any $D\in \Der(\A)$ induces a derivation on $\A_N$, denoted $D_N$, satisfying $D_N(a_{ij})=D(a)_{ij}$. It is given on elements of the form \eqref{Eq:spanAN} by  
\begin{equation} \label{Eq:DerInduce}
 D_N(\alpha_{\mbf i\mbf j})=D(\alpha)_{\mbf i\mbf j},   
\end{equation}
where on the right side we use the extension \eqref{Eq:Dext} of $D$ to $\A^{\otimes r}$. 
There is a natural left action of $\Gl_N(\kk)$ on $\A_N$ given as follows on generators:
\begin{equation} \label{Eq:Act}
    g\ast a_{ij} = \sum_{1\leq u,v \leq N} (g^{-1})_{iu} a_{uv} g_{vj}, \qquad 
g\in \Gl_N(\kk),\,\, a \in \A, \,\, 1\leq i,j \leq N\,.
\end{equation}
Then, the derivation $D_N\in \Der(\A_N)$ induced by $D\in \Der(\A)$ is automatically $\Gl_N(\kk)$-equivariant. 

\subsection{Discovering (\texorpdfstring{$\id$}{id}-adapted) double transposed Poisson algebras}

\begin{lemma}
   Let $D\in \Der(\A)$. Then, for any $N\geq 1$,  the operation 
   $[-,-]_N:\A_N\times \A_N\to \A_N$ satisfying 
for any $\alpha,\beta \in \mbf T\A$ homogeneous and 
$\mbf i, \mbf j\in \{1,\ldots,N\}^{\times |\alpha|}$, 
$\mbf k, \mbf l\in \{1,\ldots,N\}^{\times |\beta|}$, 
\begin{equation} \label{Eq:Lem1}
    [\alpha_{\mbf i \mbf j} , \beta_{\mbf k \mbf l}]_N
= \alpha_{\mbf i \mbf j} \, D(\beta)_{\mbf k \mbf l} 
- D(\alpha)_{\mbf i \mbf j} \, \beta_{\mbf k \mbf l}
\end{equation}
is a Lie bracket on $\A_N$. Furthermore, it 
turns $\A_N$ with its commutative associative product into a  transposed Poisson algebra.  
\end{lemma}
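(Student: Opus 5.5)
The plan is to reduce the statement to Example~\ref{Ex:baiDer}, applied to the commutative algebra $\A_N$ and the induced derivation $D_N\in\Der(\A_N)$. Since $\A_N$ is commutative, associative and unital, the operation
\[
[f,g]_{D_N}=f\cdot D_N(g)-D_N(f)\cdot g,\qquad f,g\in \A_N,
\]
is a Lie bracket satisfying the transposed Leibniz rule \eqref{Eq:trPA}. Thus the whole content of the lemma is that $[-,-]_N$ is well defined and coincides with $[-,-]_{D_N}$.

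\textbf{Step 1: $D_N$ is a well-defined derivation.} We check that the assignment $a_{ij}\mapsto D(a)_{ij}$ extends to a derivation of $\A_N$. Because $\A_N$ is presented by generators and relations \eqref{Eq:AN-def}, it suffices to verify that the Leibniz extension kills each relation.
\begin{itemize}
\item Linearity is preserved because $D$ is $\kk$-linear.
\item The relation $1_{ij}=\delta_{ij}$ is respected because $D(1)=0$, and the constant $\delta_{ij}$ is sent to $0$.
\item For $(ab)_{ij}=\sum_k a_{ik}b_{kj}$, the Leibniz extension applied to the right-hand side gives $\sum_k\bigl(D(a)_{ik}b_{kj}+a_{ik}D(b)_{kj}\bigr)$. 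Using the product relation again, this equals $(D(a)b+aD(b))_{ij}=D(ab)_{ij}$.
\end{itemize}
On a monomial \eqref{Eq:spanAN}, the ordinary Leibniz rule in $\A_N$ then gives exactly $D(\alpha)_{\mbf i\mbf j}$, where $D$ on $\A^{\otimes r}$ is the extension \eqref{Eq:Dext}. This recovers \eqref{Eq:DerInduce}.

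\textbf{Step 2: identification and well-definedness of $[-,-]_N$.} By \eqref{Eq:DerInduce}, the right-hand side of \eqref{Eq:Lem1} equals
\[
\alpha_{\mbf i\mbf j}\,D_N(\beta_{\mbf k\mbf l})-D_N(\alpha_{\mbf i\mbf j})\,\beta_{\mbf k\mbf l}=[\alpha_{\mbf i\mbf j},\beta_{\mbf k\mbf l}]_{D_N}.
\]
The elements $\alpha_{\mbf i\mbf j}$ span $\A_N$, and $[-,-]_{D_N}$ is an honest bilinear map on $\A_N$. Hence there is exactly one bilinear operation satisfying \eqref{Eq:Lem1}, namely $[-,-]_{D_N}$. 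In particular, this settles the only potential obstacle: the values prescribed by \eqref{Eq:Lem1} are compatible with the linear relations among spanning monomials, such as \eqref{Eq:iojo2}.

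\textbf{Step 3: conclusion.} Example~\ref{Ex:baiDer} then gives the Lie bracket property (skew-symmetry is immediate, and the Jacobi identity is the standard computation for brackets of the form $[-,-]_D$) together with \eqref{Eq:trPA}. This proves both assertions of the lemma.

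The main obstacle is Step 1, specifically the compatibility of the Leibniz extension with the product relation. That verification is a short, direct computation.
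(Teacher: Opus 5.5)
Your proposal is correct and follows the same route as the paper, which simply invokes Example~\ref{Ex:baiDer} for the induced derivation $D_N\in\Der(\A_N)$. The only difference is that you explicitly check two things the paper takes for granted when introducing $D_N$ in Subsection~\ref{ss:MotivNot}: that $a_{ij}\mapsto D(a)_{ij}$ respects the defining relations of $\A_N$, and that the spanning monomials determine the bracket uniquely.
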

\begin{proof}
 This is an application of Example \ref{Ex:baiDer} with the derivation $D_N\in \Der(\A_N)$ induced by $D$.    
\end{proof}

\begin{example}
    Fix $d\geq 2$,  and for $\A=\kk\langle x_1,\ldots,x_d\rangle$, $m\in \{1,\ldots,d\}$ let $\partial_m\in \Der(\A)$ be uniquely determined by $\partial_m(x_j)=\delta_{mj}$. 
This entails 
$\partial_m(x_{j_1}\cdots x_{j_\ell})=\sum_{\lambda=1}^\ell \delta_{m j_\lambda} x_{j_1}\cdots x_{j_{\lambda-1}} x_{j_{\lambda+1}}\cdots x_{j_\ell}$ on a monomial. 
Given $\alpha_{\mbf i \mbf j}\in \A_N$ as in \eqref{Eq:spanAN}, one directly finds using \eqref{Eq:Dext}: 
\begin{align*} 
%\partial_m(\alpha)&=\sum_{1\leq \rho \leq r}  a^1 \otimes \cdots \otimes  a^{\rho-1} \otimes \partial_m (a^\rho) \otimes a^{ \rho+1} \otimes \cdots \otimes a^r,  \\
%\text{thus} \quad 
\partial_m(\alpha)_{\mbf i \mbf j} &=    \sum_{1\leq \rho \leq r} \partial_m (a^\rho)_{i_\rho j_\rho} \, \prod_{t\neq \rho} a_{i_t j_t}^t\,.
\end{align*}
Thus, the transposed Poisson structure on $\A_N$ obtained from $\partial_m\in \Der(\A)$ is determined by ($\beta=b_1\otimes \cdots \otimes b_s$)
\begin{equation} 
    [\alpha_{\mbf i \mbf j} , \beta_{\mbf k \mbf l}]_N
= \prod_{1\leq\rho\leq r} a_{i_\rho j_\rho}^\rho \, \sum_{1\leq \varsigma \leq s} \partial_m (b^\varsigma)_{k_\varsigma l_\varsigma} \, \prod_{t\neq \varsigma} b_{k_t l_t}^t 
-\prod_{1\leq \varsigma\leq s} b_{k_\varsigma l_\varsigma}^\varsigma  \,  \sum_{1\leq \rho \leq r} \partial_m (a^\rho)_{i_\rho j_\rho} \, \prod_{t\neq \rho} a_{i_t j_t}^t\,.
\end{equation}
For small degrees, this takes the following explicit form. Writing $\alpha=a$ or $\alpha=a^1\otimes a^2$ and $\beta=b$ or $\beta=b^1\otimes b^2$, the four cases with $|\alpha|,|\beta|\in \{1,2\}$ read
\begin{align*}
[a_{i_1 j_1} , b_{k_1 l_1}]_N
&= a_{i_1 j_1}\, \partial_m(b)_{k_1 l_1}
- b_{k_1 l_1}\, \partial_m(a)_{i_1 j_1}\,, \\
[a_{i_1 j_1} , b^1_{k_1 l_1} b^2_{k_2 l_2}]_N
&= a_{i_1 j_1}\big(\partial_m(b^1)_{k_1 l_1}\, b^2_{k_2 l_2} + b^1_{k_1 l_1}\, \partial_m(b^2)_{k_2 l_2}\big)
- b^1_{k_1 l_1} b^2_{k_2 l_2}\, \partial_m(a)_{i_1 j_1}\,, \\
[a^1_{i_1 j_1} a^2_{i_2 j_2} , b_{k_1 l_1}]_N
&= a^1_{i_1 j_1} a^2_{i_2 j_2}\, \partial_m(b)_{k_1 l_1}
- b_{k_1 l_1}\big(\partial_m(a^1)_{i_1 j_1}\, a^2_{i_2 j_2} + a^1_{i_1 j_1}\, \partial_m(a^2)_{i_2 j_2}\big)\,, \\
[a^1_{i_1 j_1} a^2_{i_2 j_2} , b^1_{k_1 l_1} b^2_{k_2 l_2}]_N
&= a^1_{i_1 j_1} a^2_{i_2 j_2}\big(\partial_m(b^1)_{k_1 l_1}\, b^2_{k_2 l_2} + b^1_{k_1 l_1}\, \partial_m(b^2)_{k_2 l_2}\big) \\
&\quad - b^1_{k_1 l_1} b^2_{k_2 l_2}\big(\partial_m(a^1)_{i_1 j_1}\, a^2_{i_2 j_2} + a^1_{i_1 j_1}\, \partial_m(a^2)_{i_2 j_2}\big)\,.
\end{align*}
\end{example}

Let us rewrite \eqref{Eq:Lem1}. 
Given multi-indices $\mbf i=(i_1,\ldots,i_r)$, $\mbf k=(k_1,\ldots,k_s)$, we let 
\[
\mbf i\sqcup \mbf k=(i_1,\ldots,i_r,k_1,\ldots,k_s) \in \{1,\ldots,N\}^{\times (r+s)}\,.
\]
After introducing the following family of mappings (with $r,s\geq 1$), 
\begin{align} \label{Eq:dsq-rs}
\llbracket-,-\rrbracket^{(r,s)}: \A^{\otimes r}\otimes \A^{\otimes s} \to \A^{\otimes (r+s)}, \qquad 
    \dsq{\alpha,\beta}^{(r,s)}:=\alpha \otimes D(\beta) - D(\alpha) \otimes \beta  
\end{align}
we can concisely rewrite \eqref{Eq:Lem1} as 
\begin{equation} \label{Eq:concis}
    [\alpha_{\mbf i \mbf j} , \beta_{\mbf k \mbf l}]_N = 
\llbracket \alpha , \beta\rrbracket^{(|\alpha|,|\beta|)}_{\mbf i\sqcup \mbf k,\mbf j\sqcup \mbf l} \,.
\end{equation}
In particular, we can compute $[-,-]_N$ on any two elements of the form \eqref{Eq:spanAN} by first computing the index-free operation \eqref{Eq:dsq-rs} and then concatenating the pair of indices. 
So we need to understand how to translate the axioms defining a transposed Poisson algebra in terms of the mappings \eqref{Eq:dsq-rs}. 

\begin{proposition} \label{Prop1}
  The family of operations $\llbracket-,-\rrbracket^{(r,s)}$ \eqref{Eq:dsq-rs} satisfies 
for $\alpha,\beta,\gamma\in \mbf T \A$ homogeneous of respective degrees $r,s,t$:
\begin{align}
&\dsq{\alpha,\beta}^{(r,s)} = - \tau_{(12)}^{s,r} \, \dsq{\beta,\alpha}^{(s,r)}; 
\label{Eq:Prop1a}\\
&\dsq{\alpha,\dsq{\beta,\gamma}^{(s,t)}}^{(r,s+t)}
   + \tau_{(123)}^{s,t,r} \dsq{\beta,\dsq{\gamma,\alpha}^{(t,r)}}^{(s,r+t)}
   + \tau_{(132)}^{t,r,s} \dsq{\gamma,\dsq{\alpha,\beta}^{(r,s)}}^{(t,r+s)}=0; 
   \label{Eq:Prop1b}\\
&2\, \gamma\otimes \dsq{\alpha,\beta}^{(r,s)}
   = \dsq{\gamma \otimes \alpha,\beta}^{(t+r,s)} + \tau_{(12)}^{r,t}\dsq{\alpha,\gamma \otimes\beta}^{(r,t+s)} ; 
   \label{Eq:Prop1c}\\
&2 \, \dsq{\alpha,\beta}^{(r,s)} \otimes \gamma 
   = \tau_{(132)}^{t,r,s}\dsq{\gamma \otimes \alpha,\beta}^{(t+r,s)} + \tau_{(23)}^{t,s}\dsq{\alpha,\gamma \otimes\beta}^{(r,t+s)} .
   \label{Eq:Prop1d}
\end{align}
These are respectively called the cyclic skewsymmetry, the multi-Jacobi identity and the (two) compatibility conditions. Furthermore, one has the compatibility with the unit 
\begin{equation} \label{Eq:Prop1unit} 
    \dsq{1,\beta}^{(1,s)} \in \A^{\otimes s}\cong \kk \otimes \A^{\otimes s} \subset \A^{\otimes (s+1)}\,,
\end{equation}
the tensor $\kk$-linearity rules, for any $0\leq \rho\leq r$ and $0\leq \varsigma \leq s$, 
\begin{equation} \label{Eq:Prop1L} 
    \dsq{1\otimes_\rho \alpha,\beta}^{(r+1,s)} = 1\otimes_\rho \dsq{\alpha,\beta}^{(r,s)}\, , \quad 
\dsq{\alpha,1\otimes_\varsigma \beta}^{(r,s+1)} = 1\otimes_{r+\varsigma} \dsq{\alpha,\beta}^{(r,s)}\,,
\end{equation}
the multiplication rules, for any $1\leq \rho\leq r-1$ and $1\leq \varsigma \leq s-1$,
\begin{equation} \label{Eq:Prop1M} 
    \dsq{\mult_{\rho,\rho+1} \alpha,\beta}^{(r-1,s)} = \mult_{\rho,\rho+1}\dsq{\alpha,\beta}^{(r,s)}\, , \quad 
\dsq{\alpha,\mult_{\varsigma,\varsigma+1} \beta}^{(r,s-1)} = \mult_{r+\varsigma,r+\varsigma+1}\dsq{\alpha,\beta}^{(r,s)}\,,
\end{equation}
and the following rules of permutation within an argument,
\begin{equation} \label{Eq:Prop1S} 
    \dsq{\alpha,\tau_{\sigma}^{m_1,\ldots,m_\ell}\beta}^{(r,s)}
= \tau_{\widehat{\sigma}}^{r,m_1,\ldots,m_\ell}  \dsq{\alpha,\beta}^{(r,s)}\,, \quad
    \dsq{\tau_{\sigma}^{m_1,\ldots,m_\ell}\beta,\alpha}^{(s,r)}
= \tau_{\sigma}^{m_1,\ldots,m_\ell}  \dsq{\beta,\alpha}^{(s,r)}\,,
\end{equation}
where $m_1,\ldots,m_\ell \in \ZZ_{>0}$, $m_1+\ldots+m_\ell=s$, and $\sigma\in S_\ell$ 
is lifted to $\widehat{\sigma}=\id_1\times \sigma\in S_{\ell+1}$.  
\end{proposition}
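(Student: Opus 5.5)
The plan is a direct computation from the definition \eqref{Eq:dsq-rs}. Everything follows from two facts about the extension \eqref{Eq:Dext} of $D$ to $\mbf T\A$: it is a derivation for the tensor product, $D(\alpha\otimes\beta)=D(\alpha)\otimes\beta+\alpha\otimes D(\beta)$, and it commutes with the structural maps. That is, $D$ commutes with every block permutation $\tau_\sigma$ and with every $\mult_{u,v}$, and it satisfies $D(1\otimes_\rho\alpha)=1\otimes_\rho D(\alpha)$ because $D(1)=0$. I would record these facts first as a preliminary lemma. Commutation with $\tau_\sigma$ holds because $D$ acts as a sum over tensor positions. Commutation with $\mult_{u,v}$ is exactly the Leibniz rule $D(a^ua^v)=D(a^u)a^v+a^uD(a^v)$.

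\textbf{Cyclic skewsymmetry and unit.} For \eqref{Eq:Prop1a}, apply $\tau^{s,r}_{(12)}$ to $\beta\otimes D(\alpha)-D(\beta)\otimes\alpha$. For \eqref{Eq:Prop1unit}, $D(1)=0$ gives $\dsq{1,\beta}=1\otimes D(\beta)\in\kk\otimes\A^{\otimes s}$.

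\textbf{Structural rules.} For \eqref{Eq:Prop1L}, \eqref{Eq:Prop1M} and \eqref{Eq:Prop1S}, substitute and move the structural map outside using the preliminary lemma. The only care needed is index bookkeeping. In the second argument, the block $\beta$ sits after the $r$ factors of $\alpha$, so positions shift by $r$ and $\sigma$ is lifted to $\id_1\times\sigma$, with $\alpha$ forming the first block. For example, $\dsq{\alpha,\mult_{\varsigma,\varsigma+1}\beta}$ equals $\alpha\otimes\mult_{\varsigma,\varsigma+1}D(\beta)-D(\alpha)\otimes\mult_{\varsigma,\varsigma+1}\beta$, which is $\mult_{r+\varsigma,r+\varsigma+1}\dsq{\alpha,\beta}$.

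\textbf{Compatibility conditions.} Expanding \eqref{Eq:Prop1c} with the Leibniz rule gives
\[
\dsq{\gamma\otimes\alpha,\beta}=\gamma\otimes\alpha\otimes D\beta-D\gamma\otimes\alpha\otimes\beta-\gamma\otimes D\alpha\otimes\beta .
\]
Similarly, $\dsq{\alpha,\gamma\otimes\beta}=\alpha\otimes D\gamma\otimes\beta+\alpha\otimes\gamma\otimes D\beta-D\alpha\otimes\gamma\otimes\beta$. Applying $\tau^{r,t}_{(12)}$ to the latter, which swaps the $\alpha$ and $\gamma$ blocks, the $D\gamma$ terms cancel. What remains is $2\gamma\otimes(\alpha\otimes D\beta-D\alpha\otimes\beta)$. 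Identity \eqref{Eq:Prop1d} is the same computation with the permutations moving $\gamma$ to the last block. This is precisely the graded analogue of the transposed Leibniz check behind Example \ref{Ex:baiDer}.

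\textbf{Main obstacle: multi-Jacobi.} Write
\[
\dsq{\alpha,\dsq{\beta,\gamma}}=\alpha\otimes D(\beta\otimes D\gamma-D\beta\otimes\gamma)-D\alpha\otimes(\beta\otimes D\gamma-D\beta\otimes\gamma).
\]
Expanding yields six terms of the shapes $\alpha\otimes D\beta\otimes D\gamma$ and $\alpha\otimes\beta\otimes D^2\gamma$ (with signs), together with their permutations. After applying $\tau_{(123)}$ and $\tau_{(132)}$ to the cyclic terms, every summand is written in the fixed order $\alpha,\beta,\gamma$. The $D^2$ terms and the mixed $D\otimes D$ terms then cancel in pairs, exactly as in the verification of the Jacobi identity for $[a,b]_D$ in the commutative case. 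The hard part is checking that the permutation conventions in \eqref{Eq:Prop1b} do restore the $(\alpha,\beta,\gamma)$ block order. I would verify this first on the degree-one case $r=s=t=1$, and then note that block permutations act uniformly on blocks of any size.
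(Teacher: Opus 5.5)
Your proposal is correct and follows the paper's proof. Both are a direct expansion of \eqref{Eq:dsq-rs} using that the extended $D$ is a derivation for $\otimes$, vanishes on $1$, and commutes with $\mult_{\rho,\rho+1}$ and with block permutations; the multi-Jacobi terms then cancel once the stated $\tau_{(123)}$, $\tau_{(132)}$ conventions put every summand back in the block order $\alpha,\beta,\gamma$. The only difference is that the paper checks just the first equality in each of \eqref{Eq:Prop1L}, \eqref{Eq:Prop1M}, \eqref{Eq:Prop1S} and gets the second from \eqref{Eq:Prop1a}, whereas you verify both directly.
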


\begin{remark} \label{Rem:pr1}
Evaluating \eqref{Eq:Prop1b} and \eqref{Eq:Prop1d} at the pair of multi-indices $(\mbf i\sqcup \mbf k\sqcup \mbf u,\mbf j\sqcup \mbf l \sqcup \mbf v)$, we recover thanks to \eqref{Eq:concis} the Jacobi identity and the compatibility condition \eqref{Eq:trPA} under $[-,-]_N$ for the triple $\alpha_{\mbf i \mbf j}, \beta_{\mbf k \mbf l}, \gamma_{\mbf u \mbf v}$. Similarly, evaluating  \eqref{Eq:Prop1a} at  $(\mbf i\sqcup \mbf k,\mbf j\sqcup \mbf l)$ yields the skewsymmetry condition.
\end{remark}

\begin{proof}[Proof of Proposition~\ref{Prop1}]
The first equality \eqref{Eq:Prop1a} is an easy consequence of the definition of the operations \eqref{Eq:dsq-rs}. 
Next, recall that $D$ was extended to $\mbf T \A$ so that $D(\alpha \otimes \beta)=D(\alpha)\otimes \beta +\alpha \otimes D(\beta)$. We compute thanks to \eqref{Eq:dsq-rs}
\begin{align*}
    \dsq{\alpha,\dsq{\beta,\gamma}^{(s,t)}}^{(r,s+t)}
 =&\alpha \otimes D(\dsq{\beta,\gamma}^{(s,t)})
 - D(\alpha)\otimes \dsq{\beta,\gamma}^{(s,t)} \\
 =&\alpha \otimes \beta \otimes D^2(\gamma)-\alpha \otimes D^2(\beta)\otimes \gamma
 - D(\alpha)\otimes\beta \otimes D(\gamma)+D(\alpha)\otimes D(\beta)\otimes \gamma \,.
\end{align*}
One can easily derive \eqref{Eq:Prop1b} by permuting factors in this expression.
After this, note that \eqref{Eq:Prop1c} and \eqref{Eq:Prop1d} are equivalent since they only change by a permutation of tensor factors. To derive the former, we compute thanks to \eqref{Eq:dsq-rs}
\begin{align*}
&\dsq{\gamma \otimes \alpha,\beta}^{(t+r,s)} + \tau_{(12)}^{r,t}\dsq{\alpha,\gamma \otimes\beta}^{(r,t+s)} \\
=& \gamma \otimes \alpha \otimes D(\beta) - D(\gamma \otimes \alpha) \otimes \beta
+ \tau_{(12)}^{r,t} (\alpha \otimes D(\gamma \otimes \beta) - D(\alpha) \otimes \gamma \otimes\beta)\\
=&2 \gamma \otimes \alpha \otimes D(\beta) - 2 \gamma \otimes D(\alpha) \otimes \beta, 
\end{align*}
which is precisely $2\, \gamma\otimes \dsq{\alpha,\beta}^{(r,s)}$. 
The decomposition \eqref{Eq:Prop1unit} is clear from \eqref{Eq:dsq-rs}. 
For \eqref{Eq:Prop1L} and \eqref{Eq:Prop1M}, it suffices each time to check the first equality since the second one is equivalent due to \eqref{Eq:Prop1a}. One has by \eqref{Eq:dsq-rs} 
\begin{align}
 \dsq{1\otimes_\rho \alpha,\beta}^{(r+1,s)}
 =(1\otimes_\rho \alpha) \otimes D(\beta) - D(1\otimes_\rho \alpha) \otimes \beta 
= 1\otimes_\rho (\alpha \otimes D(\beta) - D(\alpha) \otimes \beta) = 
1\otimes_\rho \dsq{\alpha,\beta}^{(r,s)}
\end{align}
where we used that $D(1)=0$ and this yields \eqref{Eq:Prop1L}. Similarly,  
\begin{align}
 \dsq{\mult_{\rho,\rho+1}\alpha,\beta}^{(r-1,s)}
 =(\mult_{\rho,\rho+1}\alpha) \otimes D(\beta) - D(\mult_{\rho,\rho+1} \alpha) \otimes \beta 
= \mult_{\rho,\rho+1} (\alpha \otimes D(\beta) - D(\alpha) \otimes \beta) = 
\mult_{\rho,\rho+1}\dsq{\alpha,\beta}^{(r,s)}
\end{align}
since $D$ and $\mult_{\rho,\rho+1}$ commute, which gives \eqref{Eq:Prop1M}. 

Finally, we need to derive the first equality in \eqref{Eq:Prop1S} since the second one is equivalent due to \eqref{Eq:Prop1a}. 
We put $\beta=\beta^{(1)}\otimes \cdots \otimes \beta^{(\ell)}$, $\beta^{(j)}\in \A^{\otimes m_j}$, so that $\tau_{\sigma}^{m_1,\ldots,m_\ell}$ is simply the permutation $\sigma\in S_\ell$ applied to this factorization in $\ell$ tensor factors. Then, using \eqref{Eq:dsq-rs}, we directly get
\begin{align}
  \dsq{\alpha,\tau_{\sigma}^{m_1,\ldots,m_\ell}\beta}^{(r,s)}
 =& \alpha \otimes D(\tau_\sigma \beta^{(1)}\otimes \cdots \otimes \beta^{(\ell)})
 - D(\alpha) \otimes (\tau_\sigma \beta^{(1)}\otimes \cdots \otimes \beta^{(\ell)}) \\
=& \tau_{\widehat{\sigma}}^{r,m_1,\ldots,m_\ell}  \dsq{\alpha,\beta}^{(r,s)} \,,
\end{align}
which concludes the proof.
\end{proof}

Let us emphasize that, as part of this proof, we used that the 2 equalities in 
\eqref{Eq:Prop1L} (similarly for \eqref{Eq:Prop1M} and \eqref{Eq:Prop1S}) are equivalent under the cyclic skewsymmetry \eqref{Eq:Prop1a}. 
Moreover, \eqref{Eq:Prop1c} and \eqref{Eq:Prop1d} follow from one another by applying a permutation. Using \eqref{Eq:Prop1S}, one can also verify that they are equivalent to 
\begin{align} 
&2\, \gamma\otimes \dsq{\alpha,\beta}^{(r,s)}
   =\tau_{(12)}^{r,t} \dsq{\alpha\otimes \gamma,\beta}^{(r+t,s)} + \tau_{(123)}^{r,s,t}\dsq{\alpha,\beta\otimes \gamma}^{(r,s+t)} ; 
   \label{Eq:equiv1c}\\
&2 \, \dsq{\alpha,\beta}^{(r,s)} \otimes \gamma 
   = \tau_{(23)}^{t,s}\dsq{\alpha\otimes \gamma,\beta}^{(r+t,s)} 
   + \dsq{\alpha,\beta\otimes\gamma}^{(r,s+t)} .
   \label{Eq:equiv1d}
\end{align}

\begin{definition} \label{Def:id-trDPA} 
   An \emph{$\id$-adapted double transposed Poisson algebra} is an associative unital algebra $\A$ 
equipped with a family of $\kk$-linear maps ($r,s\geq 1$)
\begin{align} \label{Eq:id-trDPA}
\llbracket-,-\rrbracket^{(r,s)}: \A^{\otimes r}\otimes \A^{\otimes s} \to \A^{\otimes (r+s)}, 
\end{align}
satisfying the cyclic skewsymmetry \eqref{Eq:Prop1a}, the multi-Jacobi identity \eqref{Eq:Prop1b}, the compatibility conditions \eqref{Eq:Prop1c}-\eqref{Eq:Prop1d}, 
the compatibility with the unit \eqref{Eq:Prop1unit}, 
the tensor $\kk$-linearity rules \eqref{Eq:Prop1L}, the multiplication rules \eqref{Eq:Prop1M}, 
and the permutation rules \eqref{Eq:Prop1S}.  
\end{definition}

Note that \eqref{Eq:Prop1unit} can be reformulated as saying that \eqref{Eq:Prop1L} also holds for $r=0$ or $s=0$ after identifying $\kk 1\subset \A$ with $\kk$.

\begin{example}
  Fix a derivation $D\in \Der(\A)$. Then endowing $\A$ with the family of maps $\llbracket-,-\rrbracket^{(r,s)}$ \eqref{Eq:dsq-rs} defines an $\id$-adapted double transposed Poisson algebra by Proposition \ref{Prop1}. 
\end{example}

\begin{remark}
   The terminology  ``$\id$-adapted'' will become transparent to the reader after understanding the general Definition \ref{Def:trDPA}. We are currently considering the specific case where elements of $\sss$ do not appear in the bracket and where the factor in the group algebra is always the identity element.  
\end{remark}

\subsection{Main properties}

\begin{proposition} \label{Pr:DerIdAdapt}
An $\id$-adapted double transposed Poisson algebra is of the form  \eqref{Eq:dsq-rs} with
$D\in \Der(\A)$ given for any $\alpha\in \A$ by setting 
\begin{equation} \label{Eq:Dalpha}
     D(\alpha)=\mult\dsq{1,\alpha}^{(1,1)}\,.
\end{equation}
\end{proposition}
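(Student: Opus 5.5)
The plan is to extract everything from brackets with the unit in the first slot. By the compatibility with the unit \eqref{Eq:Prop1unit}, for every $s\geq 1$ and $\beta\in\A^{\otimes s}$ there is a unique element $\delta(\beta)\in\A^{\otimes s}$ with $\dsq{1,\beta}^{(1,s)}=1\otimes\delta(\beta)$. This defines linear maps $\delta\colon\A^{\otimes s}\to\A^{\otimes s}$ for all $s$. For $s=1$ we get $\mult\dsq{1,a}^{(1,1)}=\delta(a)$, so $\delta|_{\A}$ is the map $D$ of \eqref{Eq:Dalpha}. It then remains to prove two things: the bracket is expressed through $\delta$, and $\delta$ is the Leibniz extension \eqref{Eq:Dext} of a derivation $D$ of $\A$.

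First, I apply the compatibility condition \eqref{Eq:Prop1c} with $\alpha=1$ (so $r=1$) and arbitrary homogeneous $\gamma,\beta$ of degrees $t,s$:
\[
2\,\gamma\otimes 1\otimes\delta(\beta)=\dsq{\gamma\otimes 1,\beta}^{(t+1,s)}+\tau^{1,t}_{(12)}\big(1\otimes\delta(\gamma\otimes\beta)\big).
\]
By the tensor $\kk$-linearity rule \eqref{Eq:Prop1L} with $\rho=t$, the first term equals $1\otimes_t\dsq{\gamma,\beta}^{(t,s)}$. The second term is $1\otimes_t\delta(\gamma\otimes\beta)$, since the swap moves the unit past the first $t$ factors. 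So all three terms have a $1$ in position $t+1$, and removing it gives
\[
\dsq{\gamma,\beta}^{(t,s)}=2\,\gamma\otimes\delta(\beta)-\delta(\gamma\otimes\beta).
\]
Second, the permutation rule \eqref{Eq:Prop1S} with first argument $1$ gives $\delta(\tau_\sigma\beta)=\tau_\sigma\delta(\beta)$, so $\delta$ commutes with block permutations. Applying cyclic skewsymmetry \eqref{Eq:Prop1a} to the displayed formula then gives
\[
\dsq{\gamma,\beta}^{(t,s)}=-\tau^{s,t}_{(12)}\big(2\,\beta\otimes\delta(\gamma)-\delta(\beta\otimes\gamma)\big)=-2\,\delta(\gamma)\otimes\beta+\delta(\gamma\otimes\beta).
\]
Comparing the two expressions yields $\delta(\gamma\otimes\beta)=\delta(\gamma)\otimes\beta+\gamma\otimes\delta(\beta)$. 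Substituting back gives $\dsq{\gamma,\beta}=\gamma\otimes\delta(\beta)-\delta(\gamma)\otimes\beta$. By induction on the degree, $\delta$ on $\A^{\otimes s}$ is the extension \eqref{Eq:Dext} of $D=\delta|_\A$.

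Third, I show $D\in\Der(\A)$. The multiplication rule \eqref{Eq:Prop1M} with first argument $1$ and $\beta=a\otimes b$ gives
\[
1\otimes D(ab)=\dsq{1,\mult_{1,2}(a\otimes b)}^{(1,1)}=\mult_{2,3}\big(1\otimes\delta(a\otimes b)\big)=1\otimes\big(D(a)b+aD(b)\big),
\]
using the Leibniz formula for $\delta$ just proved. Uniqueness of $D$ is immediate, because \eqref{Eq:Dalpha} recovers it from the bracket. Only unit compatibility, one compatibility condition, skewsymmetry, $\kk$-linearity, permutation and multiplication rules are used; Jacobi is not needed.

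The main obstacle I expect is the index bookkeeping in the first step. One must check carefully that $\tau^{1,t}_{(12)}$ and the insertion $1\otimes_t$ in \eqref{Eq:Prop1L} place the unit in the same slot, and that $\tau^{s,t}_{(12)}$ commutes with $\delta$ in the second step. Otherwise the cancellation of the unit factor, and the comparison of the two expressions for the bracket, would be invalid.
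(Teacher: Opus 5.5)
Your argument is correct, and it follows a different and shorter route than the paper's. The paper works in three stages:
\begin{enumerate}
\item It first shows that $D$ is a derivation of $\A$. It expands $\dsq{1,\beta\gamma}^{(1,1)}=\mult_{2,3}\dsq{1,\beta\otimes\gamma}^{(1,2)}$ in two ways, via \eqref{Eq:Prop1c} and via \eqref{Eq:equiv1d}, and adds the results so that the unknown term $\mult_{2,3}(1\otimes\dsq{\beta,\gamma}^{(1,1)})$ cancels.
\item It then proves by a separate induction on $r$ that $\mult_{1,2}\dsq{1,\alpha}^{(1,r)}$ is the Leibniz extension \eqref{Eq:Dext}. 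Each step of the induction again adds two expansions.
\item Only then does it write $\dsq{\alpha,\beta}^{(r,s)}$ in two ways, each containing the unknown $\dsq{1,\beta\otimes\alpha}^{(1,r+s)}$, and adds them.
\end{enumerate}
You proceed differently. You use \eqref{Eq:Prop1unit} at the outset to define $\delta$ on every tensor power. You derive the single formula $\dsq{\gamma,\beta}=2\gamma\otimes\delta(\beta)-\delta(\gamma\otimes\beta)$. You get its mirror image from skewsymmetry and the permutation rule, rather than from the second compatibility condition; note that the equivalence of that condition with \eqref{Eq:Prop1c} itself rests on \eqref{Eq:Prop1S}. Then you \emph{subtract} the two expressions instead of adding them. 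The unknown $\delta(\gamma\otimes\beta)$ thus produces the tensor Leibniz rule for $\delta$ and the bracket formula at the same time, in all degrees. This removes the need for the induction, and the derivation property on $\A$ becomes a one-line consequence of \eqref{Eq:Prop1M}. The paper's ordering obtains $D\in\Der(\A)$ first, directly in degree one, before handling higher tensor powers. Yours obtains it last, as a specialization of the general tensor Leibniz rule, and is more economical. Both arguments use the same axioms, and neither needs the multi-Jacobi identity.

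The bookkeeping points you flag are fine:
\begin{itemize}
\item $\tau^{1,t}_{(12)}$ moves the unit in $1\otimes\delta(\gamma\otimes\beta)$ to slot $t+1$, which is exactly where $1\otimes_t$ in \eqref{Eq:Prop1L} puts it.
\item The map $1\otimes_t$ is injective, since $\mult_{t,t+1}$ is a left inverse. So cancelling the unit factor is legitimate.
\item \eqref{Eq:Prop1S} with $\ell=2$ and $\sigma=(12)$ gives $\tau^{s,t}_{(12)}\circ\delta=\delta\circ\tau^{s,t}_{(12)}$.
\item Dividing by $2$ uses $\operatorname{char}\kk=0$.
\end{itemize}
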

\begin{proof}
Consider the $\kk$-linear map $D:\A\to \A$ defined by \eqref{Eq:Dalpha}. 
Fix $\beta,\gamma\in \A$.  
Firstly, we derive by successive application of \eqref{Eq:Prop1M}, \eqref{Eq:Prop1c} and \eqref{Eq:Prop1L}, 
\begin{align*}
\dsq{1,\beta\gamma}^{(1,1)}
=\mult_{2,3}\dsq{1,\beta\otimes\gamma}^{(1,2)} 
=& \mult_{2,3}\circ \tau_{(12)}^{1,1}\left(2 \beta\otimes\dsq{1,\gamma}^{(1,1)} 
- \dsq{1\otimes_1 \beta,\gamma}^{(2,1)} \right) \\
=&2 (1\otimes\beta) \dsq{1,\gamma}^{(1,1)} - \mult_{2,3}\,( 1\otimes \dsq{\beta, \gamma}^{(1,1)}) \,.
\end{align*}
And similarly by applying \eqref{Eq:Prop1M}, \eqref{Eq:equiv1d}, \eqref{Eq:Prop1L} and \eqref{Eq:Prop1a},  
\begin{align*}
\dsq{1,\beta\gamma}^{(1,1)}
=\mult_{2,3}\dsq{1,\beta\otimes\gamma}^{(1,2)} 
=& \mult_{2,3}\left(2 \dsq{1,\beta}^{(1,1)} \otimes \gamma 
- \tau_{(23)}^{1,1} \dsq{1\otimes \gamma,\beta}^{(2,1)} \right) \\
=&2 \dsq{1,\beta}^{(1,1)} (1\otimes \gamma) 
- \mult_{2,3}\left(\tau_{(23)}^{1,1} 1\otimes\dsq{\gamma,\beta}^{(1,1)}\right) \\
=&2 \dsq{1,\beta}^{(1,1)} (1\otimes \gamma) 
+\mult_{2,3}\,( 1\otimes \dsq{\beta, \gamma}^{(1,1)}) \,.
\end{align*} 
Summing both expressions, we get $\dsq{1,-}^{(1,1)}\in \A\otimes \Der(\A)$. 
If we further apply the multiplication map, 
we get the following by \eqref{Eq:Prop1unit}
\begin{equation*}
  2\, D(\beta\gamma) =  2\mult \dsq{1,\beta\gamma}^{(1,1)}
= 2\,\beta D(\gamma) + 2\, D(\beta) \gamma.
\end{equation*}
Hence $D\in \Der(\A)$. 
Next, we show that the extension of $D$ to $\A^{\otimes r}$ given by \eqref{Eq:Dext} satisfies 
\begin{equation} \label{Eq:DalphaR}
     D(\alpha)=\mult_{(1,2)}\dsq{1,\alpha}^{(1,r)}\,, \qquad \alpha \in \A^{\otimes r}.
\end{equation}
This is proved by induction, and we have already established the case $r=1$. For $r\geq 2$, write $\alpha=\alpha'\otimes \alpha''$ where $\alpha'\in \A$, $\alpha'' \in \A^{\otimes (r-1)}$. We need to check that 
\[
\mult_{1,2}\dsq{1,\alpha}^{(1,r)} 
= (\mult_{1,2}\dsq{1,\alpha'}^{(1,1)})\otimes \alpha'' + \alpha'\otimes (\mult_{1,2}\dsq{1,\alpha''}^{(1,r-1)} )
\]
since the right-hand side is $D(\alpha')\otimes \alpha'' + \alpha' \otimes D(\alpha'')$ by induction. 
Performing manipulations similar to those at the beginning of the proof, we can show 
\begin{align*}
    \dsq{1,\alpha'\otimes \alpha''}^{(1,r)}&= 2\, \tau_{(12)}^{1,1} (\alpha'\otimes \dsq{1,\alpha''}^{(1,r-1)} )
    - 1 \otimes \dsq{\alpha',\alpha''}^{(1,r-1)}\,, \\
    \dsq{1,\alpha'\otimes \alpha''}^{(1,r)}&= 2\, \dsq{1,\alpha'}^{(1,1)} \otimes \alpha'' 
    + 1 \otimes \dsq{\alpha',\alpha''}^{(1,r-1)}\,.
\end{align*}
Taking the sum of these two expressions and applying $\mult_{1,2}$ yields 
\begin{align*}
\mult_{1,2}\dsq{1,\alpha}^{(1,r)} 
&=\mult_{1,2}\left(\tau_{(12)}^{1,1} (\alpha'\otimes \dsq{1,\alpha''}^{(1,r-1)} ) + \dsq{1,\alpha'}^{(1,1)} \otimes \alpha^+ \right) \\
&= (\mult_{1,2}\dsq{1,\alpha'}^{(1,1)})\otimes \alpha'' + \alpha'\otimes (\mult_{1,2}\dsq{1,\alpha''}^{(1,r-1)} ) 
\end{align*}
where in the second equality we use \eqref{Eq:Prop1unit}. This is the desired identity. 

Finally, it remains to verify \eqref{Eq:dsq-rs}. 
We already have the following as a consequence of \eqref{Eq:DalphaR} and \eqref{Eq:Prop1unit}: 
\begin{equation} \label{Eq:DalphaRbis}
   \dsq{1,\beta}^{(1,s)} = 1\otimes  D(\beta)\,, \qquad \beta \in \A^{\otimes s}.
\end{equation}
We have on the one hand,
\begin{align*}
\dsq{\alpha,\beta}^{(r,s)}&=\mult_{1,2}\dsq{1\otimes \alpha,\beta}^{(r+1,s)} \\
&=\mult_{1,2}\circ \tau_{(23)}^{s,r}\left(2 \, \dsq{1,\beta}^{(1,s)} \otimes \alpha 
- \dsq{1,\beta \otimes \alpha}^{(1,r+s)}\right) \\
&=2 \tau_{(12)}^{s,r} (\mult_{1,2}\dsq{1,\beta}^{(1,s)} \otimes \alpha ) 
- \tau_{(12)}^{s,r} (\mult_{1,2} \dsq{1,\beta \otimes \alpha}^{(1,r+s)}) \\
&= 2  \alpha \otimes D(\beta) - \mult_{r+s,r+s+1}\circ\tau_{(13)}^{1,s,r}  (\dsq{1,\beta \otimes \alpha}^{(1,r+s)})\,.
\end{align*}
On the other hand, we can write 
\begin{align*}
\dsq{\alpha,\beta}^{(r,s)}&=\mult_{r+s,r+s+1}\dsq{\alpha,\beta\otimes 1}^{(r,s+1)} \\
&=\mult_{r+s,r+s+1}\circ \tau_{(12)}^{s,r}\left(2 \, \beta \otimes \dsq{\alpha,1}^{(r,1)}  
- \dsq{\beta \otimes \alpha,1}^{(r+s,1)}\right) \\
&=\mult_{r+s,r+s+1}\circ \tau_{(12)}^{s,r}\left(-2 \, \tau_{(23)}^{1,r}(\beta \otimes \dsq{1,\alpha}^{(1,r)})  
+ \tau_{(132)}^{1,s,r} \dsq{1,\beta \otimes \alpha}^{(1,r+s)}\right) \\
&=-2\mult_{r+s,r+s+1}\circ \tau_{(123)}^{s,1,r}\left(\, \beta \otimes \dsq{1,\alpha}^{(1,r)}\right)  
+\mult_{r+s,r+s+1}\circ \tau_{(13)}^{1,s,r} \left(\dsq{1,\beta \otimes \alpha}^{(1,r+s)}\right) \\
&=-2\, D(\alpha)\otimes \beta 
+\mult_{r+s,r+s+1}\circ \tau_{(13)}^{1,s,r} \left(\dsq{1,\beta \otimes \alpha}^{(1,r+s)}\right) \,.
\end{align*}
Summing both expressions yields \eqref{Eq:dsq-rs}. 
\end{proof}

\begin{theorem} \label{Thm:RepIdAdapt}
   Assume that $(\A,\dsq{-,-})$ is an $\id$-adapted double transposed Poisson algebra. 
Then, for any $N\geq 1$, the $\kk$-bilinear operation $[-,-]_N:\A_N\times \A_N\to \A_N$ uniquely determined by \eqref{Eq:concis} 
together with the commutative associative multiplication turn $\A_N$ into a transposed Poisson algebra. 
\end{theorem}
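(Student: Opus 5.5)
The plan is to reduce Theorem~\ref{Thm:RepIdAdapt} to the lemma preceding Proposition~\ref{Prop1} by means of Proposition~\ref{Pr:DerIdAdapt}.

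\emph{Step 1: identify the bracket.} By Proposition~\ref{Pr:DerIdAdapt}, the given family $\dsq{-,-}^{(r,s)}$ is exactly of the form \eqref{Eq:dsq-rs} for the derivation $D\in\Der(\A)$ defined by \eqref{Eq:Dalpha}, with $D$ extended to tensor powers via \eqref{Eq:Dext}. Consequently the right-hand side of \eqref{Eq:concis} equals
\[
\alpha_{\mbf i\mbf j}\,D(\beta)_{\mbf k\mbf l}-D(\alpha)_{\mbf i\mbf j}\,\beta_{\mbf k\mbf l}
= \alpha_{\mbf i\mbf j}\,D_N(\beta_{\mbf k\mbf l})-D_N(\alpha_{\mbf i\mbf j})\,\beta_{\mbf k\mbf l},
\]
where the equality uses \eqref{Eq:DerInduce}, and $D_N\in\Der(\A_N)$ is the induced derivation.

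\emph{Step 2: well-definedness and uniqueness.} The elements $\alpha_{\mbf i\mbf j}$ span $\A_N$, but they satisfy the relations \eqref{Eq:AN-def}, i.e.\ linearity, the unit relation and \eqref{Eq:iojo2}. So one must check that prescribing \eqref{Eq:concis} on spanning elements yields a well-defined bilinear map. This is the main point to address, and Step 1 settles it. The assignment $(x,y)\mapsto x\,D_N(y)-D_N(x)\,y$ is manifestly a well-defined bilinear map on $\A_N$, and it agrees with \eqref{Eq:concis} on all spanning elements. Hence a bilinear operation satisfying \eqref{Eq:concis} exists, and it is unique because its values on a spanning set are prescribed.

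Alternatively, one could verify directly that the rules \eqref{Eq:Prop1unit}, \eqref{Eq:Prop1L}, \eqref{Eq:Prop1M} and \eqref{Eq:Prop1S} are exactly what make \eqref{Eq:concis} compatible with the relations in \eqref{Eq:AN-def}. Concretely, one would check that inserting a unit, multiplying adjacent factors via \eqref{Eq:iojo2}, and permuting factors all commute with the bracket after re-indexing. Step 1 bypasses this computation.

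\emph{Step 3: the transposed Poisson axioms.} Since $[-,-]_N=[-,-]_{D_N}$ in the notation of \eqref{Eq:baiDer}, Example~\ref{Ex:baiDer} shows that $(\A_N,\cdot,[-,-]_N)$ is a transposed Poisson algebra. As a cross-check, Remark~\ref{Rem:pr1} recovers skewsymmetry, the Jacobi identity and \eqref{Eq:trPA} by evaluating \eqref{Eq:Prop1a}, \eqref{Eq:Prop1b} and \eqref{Eq:Prop1d} at concatenated multi-indices. Either route completes the proof. In addition, since $D_N$ is $\GLN$-equivariant, the resulting structure is $\GLN$-equivariant.
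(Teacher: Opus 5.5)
Your argument is correct, but it follows a different route from the paper's. The paper does not use Proposition~\ref{Pr:DerIdAdapt} here. It works directly from the axioms of Definition~\ref{Def:id-trDPA}:
\begin{itemize}
\item it checks that \eqref{Eq:concis} respects the relations \eqref{Eq:AN-def} in the left argument, namely linearity, the unit relation via \eqref{Eq:Prop1L} and \eqref{Eq:Prop1unit}, and the product relation via \eqref{Eq:Prop1M} and \eqref{Eq:iojo2};
\item it passes to the right argument by skewsymmetry;
\item it reads off the Lie and transposed Leibniz identities from Remark~\ref{Rem:pr1}.
\end{itemize}

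You instead first invoke the classification of Proposition~\ref{Pr:DerIdAdapt} to get $[-,-]_N=[-,-]_{D_N}$. Well-definedness is then inherited from $D_N$, which only needs a check on the generators of the defining ideal using $D(1)=0$ and the Leibniz rule. The axioms then follow from Example~\ref{Ex:baiDer}. In effect you combine the Lemma preceding Proposition~\ref{Prop1} with the Corollary the paper states right after this theorem.

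Your route has three advantages:
\begin{itemize}
\item it is shorter;
\item it does not require controlling all linear relations among the spanning elements $\alpha_{\mbf i\mbf j}$ --- for example the reordering of commuting factors, which is where \eqref{Eq:Prop1S} enters and which the paper's direct check does not spell out;
\item it shows in passing that the multi-Jacobi identity \eqref{Eq:Prop1b} is not needed for the conclusion, since Proposition~\ref{Pr:DerIdAdapt} never uses it.
\end{itemize}

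The paper's route, in turn, is independent of any classification. It exhibits each axiom of Definition~\ref{Def:id-trDPA} as precisely what is required for descent to $\A_N$. This is the Kontsevich--Rosenberg template that is reused for Proposition~\ref{Pr:dtPA-induces-tPA-N}, and it would still work in settings where no derivation classification is available.
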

 
\begin{proof}
    Assuming that the operation $[-,-]_N$ is well-defined, we have seen in Remark \ref{Rem:pr1} that it is a Lie bracket satisfying the defining condition \eqref{Eq:trPA}. 
    Thus, it suffices to check that this operation is compatible with the relations \eqref{Eq:AN-def} in $\A_N$.
So let us take $\alpha,\alpha',\beta,\gamma\in \mbf T \A$ homogeneous of respective degrees $r,r,s,t$ and 
corresponding pairs of multi-indices $(\mbf i,\mbf j),(\mbf k,\mbf l),(\mbf u,\mbf v)$ of length $r,s,t$. 
Firstly, given $\lambda,\mu \in \kk$, we have 
\begin{align}
   [(\lambda \alpha+\mu \alpha')_{\mbf i\mbf j},\beta_{\mbf k\mbf l}]_N 
=  \dsq{\lambda \alpha + \mu \alpha' , \beta }^{r,s}_{\mbf i\sqcup \mbf k,\mbf j\sqcup \mbf l} 
&= \lambda  \dsq{\alpha, \beta }^{r,s}_{\mbf i\sqcup \mbf k,\mbf j\sqcup \mbf l} 
+ \mu \dsq{\alpha' , \beta }^{r,s}_{\mbf i\sqcup \mbf k,\mbf j\sqcup \mbf l} \\
&=\lambda [ \alpha_{\mbf i\mbf j},\beta_{\mbf k,\mbf l}]_N 
+\mu [ \alpha'_{\mbf i\mbf j},\beta_{\mbf k,\mbf l}]_N 
= [\lambda \alpha_{\mbf i\mbf j}+\mu \alpha'_{\mbf i\mbf j},\beta_{\mbf k \mbf l}]_N 
\end{align}
so the operation $[-,-]_N$ is compatible with the first relation in \eqref{Eq:AN-def} in its left argument. By skewsymmetry, this is also true for the right argument.

Secondly, one has for $1\leq p,q\leq N$ by \eqref{Eq:Prop1L},  
\begin{align}
   [\alpha_{\mbf i\mbf j} 1_{pq},\beta_{\mbf k\mbf l}]_N 
=  \dsq{1\otimes_r \alpha, \beta }^{r+1,s}_{\mbf i\sqcup p \sqcup \mbf k,\mbf j\sqcup q \sqcup \mbf l} 
&= 1_{pq} \, \dsq{\alpha, \beta }^{r,s}_{\mbf i\sqcup \mbf k,\mbf j\sqcup \mbf l}  
= \delta_{pq} \, [\alpha_{\mbf i\mbf j},\beta_{\mbf k\mbf l}]_N  
= [ \delta_{pq} \, \alpha_{\mbf i\mbf j},\beta_{\mbf k\mbf l}]_N  \,,
\end{align}
and this result also holds when $\alpha_{\mbf i\mbf j}$ is removed due to \eqref{Eq:Prop1unit}. 
Thus, the operation $[-,-]_N$ is compatible with the second relation in \eqref{Eq:AN-def} in its left argument, hence in both arguments. 

Thirdly, picking multi-indices  $(\mbf i',\mbf j')$ of length $r-1$, we compute  for any $1\leq u  \leq r-1$ 
by \eqref{Eq:Prop1L} and \eqref{Eq:Prop1M}
\begin{align}
 [(\mult_{u,u+1}\alpha)_{\mbf i' \mbf j'} ,\beta_{\mbf k\mbf l}]_N 
&=\dsq{ \mult_{u,u+1}\alpha,\beta }^{(r-1,s)}_{\mbf i'\sqcup \mbf k,\mbf j'\sqcup \mbf l}   
=(\mult_{u,u+1}\dsq{ \alpha,\beta }^{(r,s)})_{\mbf i'\sqcup \mbf k,\mbf j'\sqcup \mbf l}  \,.
\end{align} 
Using the multi-indices \eqref{Eq:iojo} of length $r$ depending on $u$ and $o\in \{1,\ldots,N\}$, we get thanks to \eqref{Eq:iojo2}
\[
(\mult_{u,u+1}\dsq{ \alpha,\beta }^{(r,s)})_{\mbf i'\sqcup \mbf k,\mbf j'\sqcup \mbf l} 
= \sum_{1\leq o\leq N} \dsq{ \alpha,\beta }^{(r,s)}_{\mbf i'_o\sqcup \mbf k\,,\,\mbf j'_o\sqcup \mbf l}
= \sum_{1\leq o\leq N} [\alpha_{\mbf i'_o \mbf j'_o} ,\beta_{\mbf k\mbf l}]_N 
=  \Big[\sum_{1\leq o\leq N} \alpha_{\mbf i'_o \mbf j'_o} ,\beta_{\mbf k\mbf l}\Big]_N \,.
\]
Thus, the operation $[-,-]_N$ is compatible with the last relation in \eqref{Eq:AN-def} (in the general form \eqref{Eq:iojo2}) in its left argument, hence it is compatible in both arguments.
\end{proof}

Since $\A_N$ is unital, we know that the Lie bracket given by Theorem~\ref{Thm:RepIdAdapt} is defined by the derivation $[1,-]_N$, cf. Proposition~\ref{Pr:tPA-unit}. In fact, it is determined for each $N$ by the exact same derivation on $\A$ after uniquely extending it to a derivation on $\A_N$ satisfying \eqref{Eq:DerInduce}.  

\begin{corollary}
Let $\A_N$ be equipped with a transposed Poisson algebra structure obtained as in Theorem~\ref{Thm:RepIdAdapt}. 
Then its Lie bracket is defined according to \eqref{Eq:baiDer} by $D_N\in \Der(\A_N)$ which is the lift of the unique derivation $D\in \Der(\A)$ determined by \eqref{Eq:Dalpha}.
\end{corollary}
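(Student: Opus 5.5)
We combine Proposition~\ref{Pr:DerIdAdapt} with the explicit description \eqref{Eq:concis} of $[-,-]_N$. By Proposition~\ref{Pr:DerIdAdapt}, the given $\id$-adapted double transposed Poisson structure satisfies
\[
\dsq{\alpha,\beta}^{(r,s)}=\alpha\otimes D(\beta)-D(\alpha)\otimes\beta
\]
for all homogeneous $\alpha,\beta\in\mbf T\A$. Here $D\in\Der(\A)$ is the derivation \eqref{Eq:Dalpha}, extended to tensor powers via \eqref{Eq:Dext}.

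First, evaluate this identity at the multi-indices $(\mbf i\sqcup\mbf k,\mbf j\sqcup\mbf l)$. Since evaluation of a tensor at concatenated indices factors as a product, we have
\[
(\alpha\otimes\gamma)_{\mbf i\sqcup\mbf k,\mbf j\sqcup\mbf l}=\alpha_{\mbf i\mbf j}\,\gamma_{\mbf k\mbf l}.
\]
Together with \eqref{Eq:concis}, this gives
\[
[\alpha_{\mbf i\mbf j},\beta_{\mbf k\mbf l}]_N=\alpha_{\mbf i\mbf j}\,D(\beta)_{\mbf k\mbf l}-D(\alpha)_{\mbf i\mbf j}\,\beta_{\mbf k\mbf l}.
\]

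Second, invoke \eqref{Eq:DerInduce}: the derivation $D_N\in\Der(\A_N)$ induced by $D$ satisfies $D_N(\alpha_{\mbf i\mbf j})=D(\alpha)_{\mbf i\mbf j}$. Substituting, we get
\[
[x,y]_N=x\,D_N(y)-D_N(x)\,y
\]
for all spanning elements $x,y$ of the form \eqref{Eq:spanAN}. Both sides are bilinear, so the identity extends to all of $\A_N$. This is exactly the bracket \eqref{Eq:baiDer} attached to $D_N$.

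As a consistency check, Proposition~\ref{Pr:tPA-unit} says the defining derivation must be $[1,-]_N$. Since $D(1)=0$ implies $D_N(1)=0$, we indeed get $[1,y]_N=D_N(y)$. This also gives uniqueness: on a unital algebra, the bracket \eqref{Eq:baiDer} determines its derivation.

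There is no real obstacle. The one point to handle carefully is the existence and well-definedness of $D_N$, i.e. that $D$ respects the relations \eqref{Eq:AN-def}. This holds because $D$ is a derivation with $D(1)=0$, and it was already used in Subsection~\ref{ss:MotivNot}.
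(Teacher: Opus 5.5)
Your proof is correct and follows essentially the same route as the paper's: you invoke Proposition~\ref{Pr:DerIdAdapt} to write the brackets in the form \eqref{Eq:dsq-rs}, evaluate at concatenated multi-indices via \eqref{Eq:concis}, and then use \eqref{Eq:DerInduce} to identify the result with the bracket \eqref{Eq:baiDer} defined by $D_N$. Your added remarks on $[1,-]_N=D_N$ and on the well-definedness of $D_N$ are harmless supplements that the paper leaves implicit.
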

\begin{proof}
The transposed Poisson bracket comes from an $\id$-adapted double structure on $\A$ by assumption. 
Thus, Proposition \ref{Pr:DerIdAdapt} entails that the operations $\dsq{-,-}^{(r,s)}$ on $\A$ are of the form  \eqref{Eq:dsq-rs}  
obtained (by extension) from the derivation  $D\in \Der(\A)$ satisfying \eqref{Eq:Dalpha}. 
Combining these observations with \eqref{Eq:concis}, we can write 
\begin{equation} 
  [\alpha_{\mbf i\mbf j},\beta_{\mbf k\mbf l}]_N 
=\llbracket \alpha , \beta\rrbracket^{(r,s)}_{\mbf i\sqcup \mbf k,\mbf j\sqcup \mbf l}
=\alpha_{\mbf i\mbf j} \, D(\beta)_{\mbf k\mbf l} 
- D(\alpha)_{\mbf i\mbf j} \, \beta_{\mbf k\mbf l}
=\alpha_{\mbf i\mbf j} \, D_N(\beta_{\mbf k\mbf l}) 
- D_N(\alpha_{\mbf i\mbf j}) \, \beta_{\mbf k\mbf l}\,,
\end{equation}
for pairs of multi-indices $(\mbf i,\mbf j),(\mbf k,\mbf l)$ of length $r,s$.
Hence the Lie bracket is defined from  $D_N\in \Der(\A_N)$ naturally induced by $D\in \Der(\A)$. 
\end{proof}

\begin{remark}
The induced derivation $D_N\in \Der(\A_N)$ is $\GLN$-equivariant for the action \eqref{Eq:Act} by construction, thus the transposed Poisson algebra structure on $\A_N$ obtained in Theorem~\ref{Thm:RepIdAdapt}
is also $\GLN$-equivariant. By Proposition \ref{Pr:tPA-Red}, this structure can be restricted to $\A_N^{\GLN}$.  
These observations will be generalized in Subsection~\ref{ss:dtPA-Inv}. 
\end{remark}

%%%%%%%%%% NEW SECTION %%%%%%%%%
%%%%%%%%%% NEW SECTION %%%%%%%%%
%%%%%%%%%% NEW SECTION %%%%%%%%%
%%%%%%%%%% NEW SECTION %%%%%%%%%
%%%%%%%%%% NEW SECTION %%%%%%%%%
%%%%%%%%%% NEW SECTION %%%%%%%%%
\section{General definition} \label{Sec:Gen}

In this section, we start by gathering the notions from \cite{safonkin2025double} needed to define double transposed Poisson algebras (cf. Definition \ref{Def:trDPA}), whose structures are subsequently studied. In particular, we shall see how to induce transposed Poisson algebra structures on $N$-th representation algebras and their $\Gl_N(\kk)$-invariant subrings. 

\subsection{Preliminaries} \label{ss:Prelim}

We embed the symmetric group $S_n$ in $S_{n+1}$ as its subgroup fixing the last node. 
More generally, for any $n, m\in \mathbb{Z}_{\geq 1}$, we consider the homomorphism of \textit{concatenation of permutations}
\begin{align*}
    S_n\times S_{m}\hookrightarrow S_{n+m}, \quad (w,\tau)\mapsto w\times \tau, \quad \text{where }
    (w\times \tau)(j)= 
\left\{ 
\begin{array}{ll}
     w(j)& 1\leq j \leq n ,  \\
     \tau(j-n)+n& n+1\leq j \leq n+m  .
\end{array}
\right.
\end{align*}
Given $\tau\in S_m$ and $k_1,\ldots,k_m\in\mathbb{Z}_{\geq 0}$, we denote by $\tau^{k_1,\ldots,k_m}\in S_{k_1+\ldots+k_m}$ the permutation that splits the $k_1+\ldots+k_m$ nodes in $m$ blocks of respective lengths $k_1,\ldots,k_m$, and permutes these blocks according to $\tau$. 
For example, $(12)^{n,m}(j)=m+j$ if $1\leq j \leq n$ and $(12)^{n,m}(j)=j-n$ if $n+1\leq j \leq n+m$. 

We also introduce the operation of \textit{canonical projection} \cite{kerov2004harmonic} 
\begin{align*}
    S_{n+1}\longrightarrow S_n, \quad u\mapsto u_\ast, \quad \text{where }
    u_\ast(j)= 
\left\{ 
\begin{array}{ll}
     u(j+1)-1& \text{if } u(j+1)\neq 1 ,  \\
     u^2(j+1)-1&\text{if } u(j+1)=1 .
\end{array}
\right.
\end{align*} 
If $u=\operatorname{id}_1\times w\in S_{n+1}$, we simply get $u_*=w\in S_n$. 
If $u(1)\neq 1$, this operation amounts to `jumping over' the first node by sending $u^{-1}(1)$ to $u(1)$ before relabelling the nodes $\{2,\ldots,n+1\}$ as $\{1,\ldots,n\}$.  
This map is motivated by the following result. 
\begin{proposition}[\cite{kerov2004harmonic}]\label{prop9}
Consider the two-sided action of $S_n$ on $S_{n+1}$ via $S_n\simeq\operatorname{id}_1\times S_n\subset S_{n+1}$.
Then, the map $S_{n+1}\longrightarrow S_n$, $u\mapsto u_*$ is a (two-sided) $S_n$-equivariant map. 
Moreover, if $n\geq 4$, this is the only map $S_{n+1}\longrightarrow S_n$ with this property.
\end{proposition}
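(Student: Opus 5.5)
The plan is to first give a cycle-level description of $u\mapsto u_*$, then check two-sided equivariance directly from the defining formula, and finally prove uniqueness by a double-coset argument. For the description: $u_*$ is obtained by deleting node $1$ from the cycle decomposition of $u$ (so $u^{-1}(1)$ is sent to $u(1)$) and relabelling $k\mapsto k-1$ on $\{2,\ldots,n+1\}$. Indeed, for a node $j+1$ with $u(j+1)\neq 1$ the image $u(j+1)$ is kept. If $u(j+1)=1$, the next node in the cycle is $u(1)=u^2(j+1)$.

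\emph{Equivariance.} Write $w=\operatorname{id}_1\times w'$ and $v=\operatorname{id}_1\times v'$ with $w',v'\in S_n$, so that $w(k)-1=w'(k-1)$ for $k\geq 2$. It suffices to prove $(wu)_*=w'u_*$ and $(uv)_*=u_*v'$ separately.

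For left multiplication, note that $wu(j+1)=1$ if and only if $u(j+1)=1$, because $w$ fixes $1$.
\begin{itemize}
\item In the first case of the definition, $(wu)_*(j)=w(u(j+1))-1=w'(u_*(j))$.
\item In the second case, $(wu)^2(j+1)=wu(1)$, so $(wu)_*(j)=w'(u(1)-1)=w'(u_*(j))$.
\end{itemize}

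For right multiplication, $(uv)(j+1)=u(v'(j)+1)$.
\begin{itemize}
\item If this is $\neq 1$, we get $(uv)_*(j)=u(v'(j)+1)-1=u_*(v'(j))$.
\item If it equals $1$, then $(uv)^2(j+1)=uv(1)=u(1)$, and again $(uv)_*(j)=u(1)-1=u_*(v'(j))$.
\end{itemize}
This case analysis is routine.

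\emph{Uniqueness for $n\geq 4$.} Let $f\colon S_{n+1}\to S_n$ be two-sided equivariant. Then $f$ is determined by its values on representatives of the double cosets $S_n\backslash S_{n+1}/S_n$. There are exactly two such double cosets: $S_n$ itself (where $u(1)=1$), and the set of $u$ with $u(1)\neq1$. The latter contains $t=(12)$, since any $u$ with $u(1)=a\neq 1$ can be written as $w t v$ with $w,v$ fixing $1$.

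For the first double coset, equivariance gives $f(\operatorname{id})=f(w\,\operatorname{id}\,w^{-1})=w' f(\operatorname{id}) w'^{-1}$ for all $w'\in S_n$. Hence $f(\operatorname{id})$ is central in $S_n$, so it is trivial for $n\geq 3$.

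For $t$, I will compute the pairs $(w,v)$ with $wtv=t$. Such pairs force $v$ to fix $t(1)=2$ as well as $1$, and then $w=tv^{-1}t=v^{-1}$. This holds because $t$ commutes with permutations fixing $1$ and $2$. Consequently $y:=f(t)$ commutes with every element of the copy of $S_{n-1}\subset S_n$ fixing the label $1$ (which is node $2$).
\begin{itemize}
\item An element of $S_n$ commuting with this $S_{n-1}$ preserves its unique fixed point, so it fixes $1$.
\item It is then central in $S_{n-1}$, hence trivial once $n-1\geq 3$, i.e. $n\geq 4$. So $y=\operatorname{id}$.
\end{itemize}

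Finally, I check directly that $t_*=\operatorname{id}$: since $t(2)=1$, we get $t_*(1)=t^2(2)-1=1$, and all other points are fixed. Likewise $\operatorname{id}_*=\operatorname{id}$. Hence $f$ agrees with $u\mapsto u_*$ on both representatives, and therefore everywhere. The only delicate point is the stabilizer computation for $t$, and it is exactly there that the hypothesis $n\geq4$ enters.
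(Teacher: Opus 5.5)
Your proof is correct and complete. The paper gives no argument of its own for this statement; it imports it from \cite{kerov2004harmonic}, so there is no in-paper proof to compare against. Your self-contained verification works as follows.

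\emph{Equivariance.} It is the two-case check on the defining formula for $u_*$. The only input is that elements of $\operatorname{id}_1\times S_n$ fix node $1$. Hence $wu$ and $u$ send the same node to $1$, and in the second case one has $(uv)^2(j+1)=u(1)$.

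\emph{Uniqueness.} This is an orbit--stabilizer argument on the two double cosets $S_n$ and $S_n(12)S_n$.
\begin{enumerate}
\item The value at $\operatorname{id}$ is forced by $Z(S_n)=1$, which needs $n\geq 3$.
\item The value at $(12)$ is forced by your stabilizer computation: the pairs fixing $(12)$ are exactly $(x^{-1},x)$ with $x$ fixing nodes $1$ and $2$. This reduces the question to the triviality of the centralizer of $\operatorname{Stab}_{S_n}(1)\cong S_{n-1}$ in $S_n$, which is where $n\geq 4$ enters.
\end{enumerate}

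Your stabilizer computation also shows the bound is sharp. For $n=3$ the centralizer of $\operatorname{Stab}_{S_3}(1)$ contains $(23)$. So sending $w(12)v\mapsto w'(23)v'$ on the nontrivial double coset, and $w\mapsto w'$ on $S_3$, gives a second well-defined two-sided equivariant map $S_4\to S_3$. It is well defined because $(23)$ commutes with every $x'\in\operatorname{Stab}_{S_3}(1)$.
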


\begin{definition}
    A \textit{diagonal $\SSS$-bimodule} $M$ is a graded vector space $M=\bigoplus_{n\geq 0}M^{(n)}$, where each $M^{(n)}$ is an $S_n$-bimodule. The degree of a homogeneous element $m\in M^{(n)}$ is denoted by $|m|=n$. A \textit{morphism of diagonal $\SSS$-bimodules} $f:M\longrightarrow N$ is a collection of $S_n$-bimodule homomorphisms $f_n:M^{(n)}\longrightarrow N^{(n)}$. The adjoint $\SSS$-module of $M$ is the same graded vector space $M$ endowed with the adjoint action by 
\[
\operatorname{Ad}(\sigma)(m):=\sigma\cdot m\cdot\sigma^{-1}, \qquad \sigma\in S_n,\,\,m\in M^{(n)}.
\]
\end{definition}
Diagonal $\SSS$-bimodules form a symmetric monoidal category with the tensor product $\stimes$ is defined as follows: given diagonal $\SSS$-bimodules $M$ and $N$, the component of degree $n\geq 0$ of $M\stimes N$ is given by 
\begin{align}
%&M\stimes N:=\bigoplus\limits_{n\geq 0}(M\stimes N)^{(n)} \,, \qquad \text{where } \\
 &   (M\stimes N)^{(n)}=\bigoplus\limits_{i+j=n}\Bbbk[S_n]\otimes_{S_{i}\times S_{j}}\big(M^{(i)}\otimes N^{(j)}\big)\otimes_{S_{i}\times S_{j}}\Bbbk[S_n].
\end{align}
Here, $S_{i}\times S_{j}$ is regarded as a subgroup of $S_n$ via concatenation of permutations.

\medskip 

Let $\A$ be a unital associative algebra over $\Bbbk$. 
We form the vector space $\A_{\natural}:=\A/[\A,\A]$ and write 
the associated linear map as $\A\to \A_\natural$, $a\mapsto \overline{a}$. 
We set
\begin{align}
      \O(\A):=\SSym(\A_{\natural}\oplus \A[-1])=\bigoplus\limits_{n\geq 0}\A^{\otimes n}\otimes \sss\otimes \Bbbk[S_n].
\end{align}
The graded vector space $\O(\A)$ is naturally an $\SSS$-bimodule and an associative algebra for 
\begin{align*}
 \sigma\cdot \big(a\otimes f\otimes u\big)\cdot \tau:=&\,\sigma(a)\otimes f\otimes \sigma\cdot u\cdot \tau\,, \\
 (a\otimes f\otimes u)(b\otimes g\otimes v):=& (a\otimes b)\otimes (f\cdot g)\otimes u\times v, 
\end{align*}
where  $a\in\A^{\otimes n}$, $b\in\A^{\otimes m}$, $f,g\in\sss$, $u,\sigma,\tau\in S_n$, $v\in S_m$. 
This turns $\O(\A)$ into a commutative algebra in the category of $\SSS$-modules with respect to the adjoint $\SSS$-module structure.
Denoting by $\mathds{1}$ the unit of $\sss$, 
$\O(\A)$ is unital for 
\begin{equation} \label{Eq:Unit-OA}
    \mathds{1}_{\O(\A)}:= \mathds{1} \otimes \id_0 \in\O(\A)^{(0)} . 
\end{equation}

The algebra $\O(\A)$ first appeared in \cite{ginzburg2010differentialoperatorsbvstructures} under the name \textit{Fock algebra}. In \cite{safonkin2025double}, it was called \textit{double coordinate ring} because it naturally replaces $\A_N$ (cf. Subsection \ref{ss:MotivNot}) in noncommutative constructions compatible with the Kontsevich-Rosenberg principle.

    Introduce a graded linear map $\pi:\O(\A)\longrightarrow \O(\A)$ of degree $-1$ by $\pi:\O(\A)^{(n)}\longrightarrow \O(\A)^{(n-1)}$
\begin{equation}
    \pi(a\otimes f\otimes u)=\begin{cases}
        (a_2\otimes\ldots\otimes a_n)\otimes \overline{a_1}\cdot f\otimes u_*, &\text{if}\ u(1)=1,\\[5pt]
        m_{1,u(1)}(a) \otimes f\otimes u_*, &\text{if}\ u(1)>1,
    \end{cases}
\end{equation}

where 
\begin{itemize}
    \item $u\mapsto u_*$ is the canonical projection;
    
    \item $a=a_1\otimes\ldots\otimes a_n\in \A^{\otimes n}$, $f\in\sss$, and $u\in S_n$;
    
    \item the map $m_{1,k}:\A^{\otimes n}\rightarrow \A^{\otimes n-1}$ multiplies the $k$-th tensor factors by the first one on the left, cf. \eqref{Eq:mult-uv}.
\end{itemize}

For instance, $\pi(a\otimes f\otimes \operatorname{id}_1)=1\otimes\overline{a}\cdot f\otimes\operatorname{id}_0$ and $\pi(1\otimes f\otimes\operatorname{id}_0)=0$ for $a\in \A$, $f\in\sss$.

Denote by $\widehat{1}:\O(\A)\longrightarrow\O(\A)$ the multiplication operator by $1\otimes\mathds{1}\otimes\operatorname{id}_1\in\O(\A)^{(1)}$ on the left, where $1\in \A$, $\mathds{1}\in\sss$, and $\id_1\in S_1$ are the units. Explicitly, one has
\begin{equation}
    \widehat{1}(\alpha)=(1\otimes a)\otimes f\otimes (\operatorname{id}_1\times u)\in\O(\A)^{(n+1)},    
\end{equation}
where $\alpha=a\otimes f\otimes u\in\O(\A)^{(n)}$. 

We will need to produce elements of $\A_N$ out of $\O(\A)$. Assume that two tuples of indices ranging from $1$ to $N$ are given, say $\mathbf{i}=(i_1,\ldots,i_n)$ and $\mathbf{j}=(j_1,\ldots,j_n)$. Take any $a_1,\ldots, a_n\in \A$, a permutation $u\in S_n$, and any $f_1,\ldots, f_m\in \A_{\natural}$. Then the element $\alpha=(a_1\otimes\ldots\otimes a_n) \otimes f_1\cdot\ldots\cdot f_m\otimes u$ belongs to $\O(\A)^{(n)}$ and the element of the coordinate ring $\A_N$ corresponding to $\mathbf{i}$, $\mathbf{j}$, and $\alpha$, denoted by $\alpha_{\mathbf{i}\mathbf{j}}$, is by definition
\begin{equation}\label{f28}
    \alpha_{\mathbf{i}\mathbf{j}}=(a_1)_{i_{u^{-1}(1)}j_1}\ldots (a_n)_{i_{u^{-1}(n)}j_n}\tr(f_1)\ldots\tr(f_m)\in\A_N.
\end{equation}

Here $\tr(f)=\sum_{j=1}^N F_{jj}$ for $F\in \A$ such that $\overline{F}=f\in \A_\natural$ (this does not depend on the chosen lift $F$ of $f$).
When $m=1$, $f=\mathds{1}$ (resp. $f=\overline{1}$) and $u=\id_{n}$, \eqref{f28} is exactly (resp. is $N$ times) the expression \eqref{Eq:spanAN}.
   
Extend this definition linearly to arbitrary elements of $\Mat^{\otimes n}$: if $X=\sum\limits_{\mathbf{i},\mathbf{j}\in[1,N]^n} X_{\mathbf{i}\mathbf{j}}E^*_{\mathbf{i}\mathbf{j}}$, where $E^*_{\mathbf{i}\mathbf{j}}:=E^*_{i_1j_1}\otimes\ldots\otimes E^*_{i_nj_n}$ and $X_{\mathbf{i}\mathbf{j}}\in \kk$, then for any $\alpha\in\O(\A)^{(n)}$ we set
\begin{align}
    \left(\alpha\middle|X\right):=\sum\limits_{\mathbf{i},\mathbf{j}\in[1,N]^n} X_{\mathbf{i}\mathbf{j}}\, \alpha_{\mathbf{i}\mathbf{j}} \, \in \A_N.
\end{align}

\subsection{Transposed Poisson and double transposed Poisson algebras}

Fix  an associative algebra $\A$. 
An $\SSS$-bimodule map $\llbracket-,-\rrbracket:\O(\A)\stimes\O(\A)\longrightarrow \O(\A)$ is a linear map whose restriction 
\[
\O(\A)^{(n)}\otimes \O(\A)^{(m)}\longrightarrow \O(\A)^{(n+m)}, \quad \text{for any } n,m\in \ZZ_{\geq 0}, 
\]
is a morphism of $(S_n\times S_{m})$-bimodules, where the target is endowed with the $(S_n\times S_{m})$-bimodule structure obtained by concatenation of permutations $S_n\times S_{m}\longrightarrow S_{n+m}$. 

\begin{definition}  \label{Def:trDPA}
    A \textit{double transposed Poisson bracket} on $\A$ is an $\SSS$-bimodule map $\llbracket-,-\rrbracket:\O(\A)\stimes\O(\A)\longrightarrow \O(\A)$
    such that for any homogeneous elements $\alpha,\beta,\gamma\in\O(\A)$ one has
    \begin{align}
        \label{f1} &\llbracket\beta,\alpha\rrbracket=-\operatorname{Ad}((12)^{|\alpha|,|\beta|})\llbracket\alpha,\beta\rrbracket,\\[5pt]
        \label{f-jacobi} &\bigl\llbracket \alpha,\llbracket\beta,\gamma\rrbracket\bigr\rrbracket+\operatorname{Ad}((123)^{|\beta|,|\gamma|,|\alpha|}) \bigl\llbracket \beta,\llbracket \gamma,\alpha\rrbracket\bigr\rrbracket +\operatorname{Ad}((132)^{|\gamma|,|\alpha|,|\beta|}) \bigl\llbracket \gamma,\llbracket \alpha,\beta\rrbracket\bigr\rrbracket=0 ,\\[5pt]
        \label{f2} &2\gamma\llbracket\alpha,\beta\rrbracket=\llbracket\gamma \alpha,\beta\rrbracket+\operatorname{Ad}((12)^{|\alpha|,|\gamma|})\llbracket\alpha,\gamma \beta\rrbracket , 
    \end{align}
    and
    \begin{align}
        \label{f3} \llbracket\pi(\alpha),\beta\rrbracket=\pi\llbracket\alpha,\beta\rrbracket,\hspace{30pt} \llbracket\1(\alpha),\beta\rrbracket=\1\llbracket\alpha,\beta\rrbracket.
    \end{align}
We then say that $(\A,\llbracket-,-\rrbracket)$ is a \textit{double transposed Poisson algebra}.
\end{definition}

\begin{remark}
    Any double transposed Poisson bracket on $\A$ is uniquely determined by a collection of maps $\A\otimes \A^{\otimes n}\longrightarrow \A^{\otimes n+1}\otimes \sss\otimes \kk[S_{n+1}]$ due to \eqref{f2} and \eqref{f3}. 
\end{remark}

The following result is a consequence of Proposition 5.8 from \cite{safonkin2025double}, see the definition of double algebras over an operad there (Definition 5.10) and the paragraph below it.

\begin{proposition} \label{Pr:dtPA-induces-tPA-N}
    If $\A$ is a double transposed Poisson algebra, then $\A_N$ is canonically a transposed Poisson algebra with the $\GLN$-equivariant bracket $[-,-]_N$ given by
    \begin{equation} \label{Eq:induced-bracket-AN}
        [(\alpha\vert X),(\beta\vert Y)]_N:=\big(\llbracket\alpha,\beta\rrbracket\vert X\otimes Y\big).
    \end{equation}
\end{proposition}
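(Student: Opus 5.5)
The plan mirrors the proof of Theorem~\ref{Thm:RepIdAdapt}, with the index-free operations replaced by the pairing $(\alpha\vert X)$. There are three steps: show that \eqref{Eq:induced-bracket-AN} gives a well-defined bilinear operation on $\A_N$; check the transposed Poisson axioms by evaluating \eqref{f1}--\eqref{f2} on tensors $X\otimes Y\otimes Z$; and then prove $\GLN$-equivariance.

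\textbf{Step 1 (well-definedness).} The elements $(\alpha\vert X)$ span $\A_N$, since already the $\alpha_{\mbf i\mbf j}$ with $u=\id$ and $f=\mathds{1}$ do. I would show that the kernel of the linear map $\bigoplus_n \O(\A)^{(n)}\otimes \Mat^{\otimes n}\to\A_N$, $\alpha\otimes X\mapsto(\alpha\vert X)$, is generated by relations of three types:
\begin{itemize}
\item $S_n$-balancing relations $(\sigma\cdot\alpha\cdot\tau\vert X)=(\alpha\vert \tau\cdot X\cdot \sigma)$, for a suitable action on index tuples read off from \eqref{f28};
\item the unit relation $(\1\alpha\vert E^*_{pq}\otimes X)=\delta_{pq}(\alpha\vert X)$;
\item the contraction relation $(\pi\alpha\vert X)=\sum_o(\alpha\vert E^*_{oo}\otimes X)$, suitably twisted by $u$.
\end{itemize}
The contraction relation simultaneously encodes the matrix-multiplication relation in \eqref{Eq:AN-def} (case $u(1)>1$) and the trace $\tr$ (case $u(1)=1$). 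This is exactly what $\pi$ was designed for, via the canonical projection $u\mapsto u_*$ and Proposition~\ref{prop9}.

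Given this description, well-definedness in the first argument follows from three facts: the bracket is an $\SSS$-bimodule map, which handles the balancing relations; $\dsq{\1\alpha,\beta}=\1\dsq{\alpha,\beta}$; and $\dsq{\pi\alpha,\beta}=\pi\dsq{\alpha,\beta}$ by \eqref{f3}. In each case, pairing against $X\otimes Y$ reproduces the corresponding relation in $\A_N$. Well-definedness in the second argument then follows from the first argument together with \eqref{f1}. Here $\operatorname{Ad}((12)^{|\alpha|,|\beta|})$ paired with $Y\otimes X$ is the same as the unpermuted element paired with $X\otimes Y$, by the balancing relation.

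\textbf{Step 2 (axioms).} Pair \eqref{f1} with $X\otimes Y$ to get skew-symmetry. Pair \eqref{f-jacobi} with $X\otimes Y\otimes Z$ to get the Jacobi identity: the operators $\operatorname{Ad}$ of the cyclic block permutations are absorbed by permuting the tensor factors of $X\otimes Y\otimes Z$, as in Remark~\ref{Rem:pr1}. For \eqref{Eq:trPA}, use the multiplicativity $(\gamma\vert Z)(\alpha\vert X)=(\gamma\alpha\vert Z\otimes X)$, which follows from the product on $\O(\A)$ and \eqref{f28}, and pair \eqref{f2} with $Z\otimes X\otimes Y$.

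\textbf{Step 3 (equivariance).} The action of $g\in\GLN$ satisfies $g\ast(\alpha\vert X)=(\alpha\vert g^{\otimes n}\cdot X\cdot (g^{-1})^{\otimes n})$, because each $a_{ij}$ transforms by conjugation and traces are invariant. Equivariance of $[-,-]_N$ is then immediate from \eqref{Eq:induced-bracket-AN}, since $g^{\otimes(n+m)}=g^{\otimes n}\otimes g^{\otimes m}$.

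The main obstacle is Step~1: identifying the relations among the $(\alpha\vert X)$ and checking that the permutation bookkeeping in \eqref{f28} and in $\pi$ (the two cases $u(1)=1$ and $u(1)>1$) matches the contraction of indices. I would first verify this on generators with $u=\id$ and $f=\mathds{1}$, where the argument reduces to the proof of Theorem~\ref{Thm:RepIdAdapt}. I would then extend to general $u$ and $f$ using $S_n$-equivariance of $u\mapsto u_*$. Alternatively, one can simply invoke Proposition~5.8 of \cite{safonkin2025double}, which packages this verification for double algebras over an arbitrary operad, applied to the transposed Poisson operad.
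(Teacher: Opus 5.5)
Your proposal is correct and follows essentially the same route as the paper's ``down-to-earth'' argument: well-definedness and $\GLN$-equivariance come from the bracket being an $\SSS$-bimodule map compatible with $\pi$ and $\1$ as in \eqref{f3}, and the transposed Poisson axioms are obtained by pairing \eqref{f1}, \eqref{f-jacobi} and \eqref{f2} with tensor products of matrices, with $\operatorname{Ad}$ absorbed by permuting matrix factors. The only difference is one of detail: the paper cites \cite[Cor.~5.2]{safonkin2025double} for well-definedness and equivariance, and gives the operadic statement \cite[Prop.~5.8]{safonkin2025double} as an alternative, whereas you propose to prove these directly, describing the relations among the $(\alpha\vert X)$ as the balancing, unit and contraction relations (with $\pi$ contracting the first index pair) and checking equivariance by hand.
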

\begin{proof}
    In the operadic language,
    Definition~\ref{Def:trDPA} states exactly that a double transposed Poisson
    algebra is an operad homomorphism $\mathcal{P}\to\mathcal{E}nd_\A$ from the
    operad $\mathcal{P}$ of transposed Poisson algebras to
    \[
        \mathcal{E}nd_\A:=\Big(\operatorname{Hom}^{\mathrm{adm}}_{\SSS}\big(\O(\A)^{\otimes_{\SSS} n},\O(\A)\big)\Big)_{n\geq 0},
    \]
    where $\operatorname{Hom}^{\mathrm{adm}}_{\SSS}$ denotes the set of admissible
    $\SSS$-bimodule homomorphisms, see \cite[Def.~5.5]{safonkin2025double}.
    Composing it with the natural operad homomorphism
    $\mathcal{E}nd_\A\to\mathcal{E}nd^{\mathrm{red}}_\A$ and with the homomorphism
    from the second item of \cite[Prop.~5.8]{safonkin2025double} yields the claim.

    A more down-to-earth explanation is the following. By \cite[Cor.~5.2]{safonkin2025double}, the formula~\eqref{Eq:induced-bracket-AN}
    defines a well-defined $\GLN$-equivariant linear map $[-,-]_N$ on $\A_N$ for
    every $N\geq 1$, since $\llbracket-,-\rrbracket$ is an $\SSS$-bimodule map
    compatible with $\pi$ and $\1$ by~\eqref{f3}. Under the pairing $(-\vert-)$ the
    adjoint action $\operatorname{Ad}$ becomes the permutation of the matrix
    factors and the product of $\O(\A)$ becomes the product of $\A_N$; hence the
    skewsymmetry, the Jacobi identity, and the transposed Leibniz rule of
    $[-,-]_N$ follow immediately from~\eqref{f1}, \eqref{f-jacobi}, and~\eqref{f2},
    respectively.
\end{proof}

Next, we can get the analog of Propositions~\ref{Pr:tPA-unit} and~\ref{Pr:DerIdAdapt} for the double coordinate ring $\O(\A)$: every double transposed Poisson bracket on $\A$ can be defined using a single derivation of $\O(\A)$ which is an $\SSS$-bimodule homomorphism compatible with $\pi$ and $\1$. Recall that $\O(\A)$ has a unit given in \eqref{Eq:Unit-OA}. 

\begin{theorem} \label{Thm:DerDtPA}
   Let $\A$ be an associative algebra equipped with a double transposed Poisson algebra structure $\llbracket-,-\rrbracket$ in the sense of Definition~\ref{Def:trDPA}.
   Then the linear map
\begin{equation} \label{Eq:Dmap}
        D:\O(\A)\longrightarrow\O(\A),\qquad D(\alpha):=\llbracket\mathds{1}_{\O(\A)},\alpha\rrbracket,
\end{equation}
is a degree-zero derivation of $\O(\A)$ which is an $\SSS$-bimodule homomorphism satisfying the compatibility conditions
\begin{equation} \label{Eq:DerCompat}
        D\circ\pi=\pi\circ D,\qquad D\circ\1=\1\circ D.
\end{equation}
The bracket is recovered from $D$ via
\begin{equation} \label{Eq:BracketFromDer}
        \llbracket\alpha,\beta\rrbracket=\alpha\, D(\beta)-D(\alpha)\,\beta ,
\end{equation}
for any $\alpha,\beta\in\O(\A)$.
\end{theorem}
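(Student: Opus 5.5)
The plan is to mimic Proposition~\ref{Pr:tPA-unit} inside $\O(\A)$, using that $\mathds{1}_{\O(\A)}$ has degree $0$. The first step is to specialise the twisted-transposed Leibniz rule \eqref{f2} to $\alpha=\mathds{1}_{\O(\A)}$. Since $|\mathds{1}_{\O(\A)}|=0$, the permutation $(12)^{0,|\gamma|}$ is the identity, so $\operatorname{Ad}$ acts trivially and we obtain
\begin{equation}
2\gamma\,D(\beta)=\llbracket\gamma,\beta\rrbracket+D(\gamma\beta),
\qquad\text{i.e.}\qquad \llbracket\gamma,\beta\rrbracket=2\gamma D(\beta)-D(\gamma\beta).
\end{equation}
Next we use the skewsymmetry \eqref{f1} with the roles exchanged:
\begin{equation}
\llbracket\beta,\gamma\rrbracket=2\beta D(\gamma)-D(\beta\gamma).
\end{equation}
Applying $-\operatorname{Ad}((12)^{|\gamma|,|\beta|})$ to this identity gives $\llbracket\gamma,\beta\rrbracket$. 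To compare the two expressions we use that $\O(\A)$ is commutative in the adjoint $\SSS$-module structure, namely $\operatorname{Ad}((12)^{|\beta|,|\gamma|})(\beta\gamma)=\gamma\beta$. We also need that $D$ is an $\SSS$-bimodule map, hence commutes with $\operatorname{Ad}$. This holds because $\llbracket-,-\rrbracket$ is an $(S_0\times S_n)$-bimodule map, and $\mathds{1}_{\O(\A)}$ is invariant. As a byproduct, $D$ preserves degree, since it is of degree $0+n$.

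Combining these facts yields $\llbracket\gamma,\beta\rrbracket=-2D(\beta)\gamma+D(\gamma\beta)$, where the product $D(\beta)\gamma$ is rewritten via adjoint commutativity. Adding this to the first identity gives
\begin{equation}
2\llbracket\gamma,\beta\rrbracket=2\gamma D(\beta)-2D(\beta)\gamma .
\end{equation}
Here we must check that $D(\beta)\gamma$ corresponds to $D(\gamma)\beta$ in the right way. To do so, I will redo the computation symmetrically: derive $\llbracket\gamma,\beta\rrbracket=D(\gamma\beta)-2D(\gamma)\beta$ by applying \eqref{f2} with the unit placed in the first slot of the second argument, i.e.\ $\beta=\mathds{1}$, and then using \eqref{f1}. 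Averaging the two formulas gives \eqref{Eq:BracketFromDer}. Subtracting them instead gives $D(\gamma\beta)=\gamma D(\beta)+D(\gamma)\beta$, which is the derivation property. This also shows that $D(\mathds{1}_{\O(\A)})=0$, consistent with skewsymmetry.

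It remains to prove the compatibilities \eqref{Eq:DerCompat}, and I expect this to be the main obstacle. The bracket is only assumed compatible with $\pi$ and $\1$ in the first argument, \eqref{f3}, whereas $D$ applies them to the second argument. The plan is to transfer \eqref{f3} to the second argument through \eqref{f1}. We have $D(\pi\beta)=\llbracket\mathds{1},\pi\beta\rrbracket=-\operatorname{Ad}(\cdot)\llbracket\pi\beta,\mathds{1}\rrbracket=-\operatorname{Ad}(\cdot)\,\pi\llbracket\beta,\mathds{1}\rrbracket$. Since the unit has degree $0$, the block permutations are trivial, and this reduces to $\pi\llbracket\mathds{1},\beta\rrbracket=\pi D(\beta)$. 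The same argument handles $\1$. The delicate point is to make sure that $\operatorname{Ad}((12)^{0,n})$ is genuinely the identity, and that no hidden twist by the canonical projection $u\mapsto u_*$ appears. This is exactly where working with a degree-zero unit simplifies matters, and I would verify it directly from the definitions of $\pi$ and $\1$ on elements $a\otimes f\otimes u$.
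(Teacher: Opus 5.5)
Your proposal is correct and follows the paper's proof: specialise \eqref{f2} at $\alpha=\mathds{1}_{\O(\A)}$, then obtain the companion identity $\llbracket\gamma,\beta\rrbracket=D(\gamma\beta)-2D(\gamma)\beta$ (the paper's \eqref{Eq:DerDtPA-pf-2} with $\beta\leftrightarrow\gamma$) from skewsymmetry, adjoint commutativity and $\operatorname{Ad}$-equivariance of $D$, add/subtract the two identities, and get \eqref{Eq:DerCompat} by sandwiching \eqref{f3} between two uses of \eqref{f1}. The permutations there are the trivial ones $(12)^{n,0}=(12)^{0,n}=\id_n$, and no hidden $u\mapsto u_*$ twist appears, since \eqref{f3} is an equality of elements.

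There is one slip: your intermediate term $-2D(\beta)\gamma$ is wrong. Applying $-\operatorname{Ad}((12)^{|\beta|,|\gamma|})$ to $\llbracket\beta,\gamma\rrbracket=2\beta D(\gamma)-D(\beta\gamma)$ turns $\beta D(\gamma)$ into $D(\gamma)\beta$, so your first route already gives the companion identity. The detour through $\beta=\mathds{1}$ in \eqref{f2} is not needed. It is also circular if you rewrite $\operatorname{Ad}((12)^{|\alpha|,|\gamma|})\llbracket\alpha,\gamma\rrbracket$ as $-\llbracket\gamma,\alpha\rrbracket$; it works only if you undo that twist with the inverse block permutation.
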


\begin{proof}
   Since $|\mathds{1}_{\O(\A)}|=0$ and $\llbracket-,-\rrbracket$ is $\SSS$-bilinear, $D$ preserves the grading and is an $\SSS$-bimodule homomorphism.

Applying \eqref{f2} with $\alpha=\mathds{1}_{\O(\A)}$ and arbitrary homogeneous $\beta,\gamma\in\O(\A)$, and using $\gamma\cdot\mathds{1}_{\O(\A)}=\gamma$ together with the fact that the permutation $(12)^{0,|\gamma|}$ is the identity, we obtain
\begin{equation} \label{Eq:DerDtPA-pf-1}
        2\gamma\, D(\beta)=\llbracket\gamma,\beta\rrbracket+D(\gamma\beta).
\end{equation}
Apply $\operatorname{Ad}\bigl((12)^{|\gamma|,|\beta|}\bigr)$ to both sides of \eqref{Eq:DerDtPA-pf-1}. The left-hand side becomes $2\,D(\beta)\,\gamma$, since the permutation $(12)^{|\gamma|,|\beta|}$ swaps the two blocks of $\gamma\, D(\beta)$ which are of sizes $|\gamma|$ and $|D(\beta)|=|\beta|$. By the cyclic skewsymmetry~\eqref{f1}, one has $\operatorname{Ad}\bigl((12)^{|\gamma|,|\beta|}\bigr)\llbracket\gamma,\beta\rrbracket=-\llbracket\beta,\gamma\rrbracket$. Finally, since $D$ is an $\SSS$-bimodule homomorphism and the graded commutativity in $\O(\A)$ gives $\operatorname{Ad}\bigl((12)^{|\gamma|,|\beta|}\bigr)(\gamma\beta)=\beta\gamma$, we get $\operatorname{Ad}\bigl((12)^{|\gamma|,|\beta|}\bigr)D(\gamma\beta)=D(\beta\gamma)$. We thus arrive at
\begin{equation} \label{Eq:DerDtPA-pf-2}
        2\,D(\beta)\,\gamma=-\llbracket\beta,\gamma\rrbracket+D(\beta\gamma).
\end{equation}
Exchanging the elements $\gamma\leftrightarrow\beta$ in \eqref{Eq:DerDtPA-pf-1} yields
\begin{equation} \label{Eq:DerDtPA-pf-3}
        2\beta\, D(\gamma)=\llbracket\beta,\gamma\rrbracket+D(\beta\gamma).
\end{equation}
Summing \eqref{Eq:DerDtPA-pf-2} and \eqref{Eq:DerDtPA-pf-3} yields the Leibniz rule
\begin{equation*}
        D(\beta\gamma)=\beta\, D(\gamma)+D(\beta)\,\gamma,
\end{equation*}
so $D$ is a derivation of the algebra $\O(\A)$. Plugging the Leibniz rule into \eqref{Eq:DerDtPA-pf-3} gives the formula \eqref{Eq:BracketFromDer}.

It remains to verify the compatibility conditions \eqref{Eq:DerCompat}. For any homogeneous $\alpha\in\O(\A)$, applying the cyclic skewsymmetry~\eqref{f1} twice (with trivial permutation $(12)^{|\alpha|,0}=\operatorname{id}$, as $|\mathds{1}_{\O(\A)}|=0$) and the first equality in \eqref{f3} once, we get
\begin{equation*}
        D\pi(\alpha)=\llbracket\mathds{1}_{\O(\A)},\pi(\alpha)\rrbracket=-\llbracket\pi(\alpha),\mathds{1}_{\O(\A)}\rrbracket=-\pi\llbracket\alpha,\mathds{1}_{\O(\A)}\rrbracket=\pi\llbracket\mathds{1}_{\O(\A)},\alpha\rrbracket=\pi D(\alpha).
\end{equation*}
The identity $D\1=\1 D$ is established by the same argument using the second equality in \eqref{f3}.
\end{proof}

Theorem~\ref{Thm:DerDtPA} reduces classifying double transposed Poisson algebra structures on $\A$ to classifying derivations of $\O(\A)$ subject to the compatibility conditions. We show next that these derivations are governed by much simpler data: each one is determined by an ordinary derivation of $\A$ valued in $\A\otimes\sss$. To make this precise, embed $\A$ into $\O(\A)^{(1)}$ via the $\kk$-linear injection
\begin{equation} \label{Eq:A-hat-embed}
    \A\hookrightarrow\O(\A)^{(1)},\qquad a\longmapsto \widehat a:=a\otimes\mathds{1}\otimes\id_1,
\end{equation}
and equip $\O(\A)^{(1)}=\A\otimes\sss$ with the $\A$-bimodule structure in which $\A$ acts on the first tensor factor only:
\begin{equation} \label{Eq:A-bimod-on-OA1}
    b\cdot(c\otimes f)\cdot d:=(bcd)\otimes f,\qquad b,c,d\in\A,\ f\in\sss.
\end{equation}
When $\A$ is finitely generated as a $\kk$-algebra, $\Der(\A,\A\otimes\sss)\simeq\Der(\A,\A)\otimes_\kk\sss$.

\begin{proposition} \label{Pr:DerOA-delta}
    There is a bijective correspondence between derivations $D:\O(\A)\longrightarrow\O(\A)$ of the algebra $\O(\A)$ which are $\SSS$-bimodule homomorphisms and which satisfy the compatibility conditions
        \begin{equation} \label{Eq:two-compat}
            D\circ\pi=\pi\circ D,\qquad D\circ\1=\1\circ D,
        \end{equation}
        and derivations $\delta:\A\longrightarrow\A\otimes\sss$ from $\A$ to the $\A$-bimodule $\A\otimes\sss$ (cf. \eqref{Eq:A-bimod-on-OA1}), i.e., $\kk$-linear maps satisfying
        \begin{equation} \label{Eq:delta-Leibniz}
            \delta(ab)=a\cdot\delta(b)+\delta(a)\cdot b,\qquad a,b\in\A.
        \end{equation}
Explicitly, this is defined by the assignment $D\longmapsto \delta_D$, where $\delta_D(a):=D(\widehat a)$ for $a\in\A$.
\end{proposition}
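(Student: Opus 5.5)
We have to show that $D\mapsto\delta_D$ is well defined, injective and surjective. The strategy is to note that $\O(\A)$ is generated, as an algebra with $\SSS$-bimodule structure, by very little data. It is generated by the elements $\widehat a\in\O(\A)^{(1)}$ ($a\in\A$) together with the degree-zero part $\sss$, and the latter is reached from $\O(\A)^{(1)}$ through $\pi$, since $\pi(\widehat a)=\overline a\in\sss$ (more precisely $1\otimes\overline a\otimes\id_0$). Concretely, a general element $(a_1\otimes\cdots\otimes a_n)\otimes f_1\cdots f_m\otimes u$ equals
\[
\widehat{a_1}\cdots\widehat{a_n}\cdot\pi(\widehat{F_1})\cdots\pi(\widehat{F_m})\cdot u,
\]
with the $F_i$ lifts of the $f_i$. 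A derivation which is an $\SSS$-bimodule map commuting with $\pi$ is therefore determined by its values on the $\widehat a$, and this gives injectivity at once.

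\emph{Well-definedness.} For a given $D$, degree preservation gives $D(\widehat a)\in\O(\A)^{(1)}=\A\otimes\sss$. To get the Leibniz rule \eqref{Eq:delta-Leibniz}, I would use $\pi$ on $\O(\A)^{(2)}$. For the transposition $u=(12)$, the definition of $\pi$ gives $\pi(\widehat a\,\widehat b\cdot(12))=\mult_{1,2}$-type product, namely $\widehat{ab}$ up to the convention on the order of multiplication (this order has to be checked against the definition of $m_{1,k}$). Applying $D\pi=\pi D$ and the Leibniz rule of $D$ in $\O(\A)$ yields $\delta(ab)=\pi\bigl((D\widehat a)\widehat b(12)+\widehat a (D\widehat b)(12)\bigr)$. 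It remains to check that $\pi$ of such an element, with one factor $c\otimes f$, is $(cb)\otimes f$ or $(ac)\otimes f$. This follows because the $\sss$-factor just rides along. Along the way I would also check that $\1$-compatibility forces $D(1\otimes\mathds 1\otimes\id_1)=0$, consistent with $\delta(1)=0$.

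\emph{Surjectivity.} Given $\delta$, I would first define $D$ on $\sss$ as the unique derivation with $D(\overline a)=\pi(\delta(a))$, i.e. $c\otimes f\mapsto\overline c f$. This is well defined on $\A_\natural$ because $\pi\delta([a,b])=\overline{a\delta(b)-\delta(b)a}+\overline{\delta(a)b-b\delta(a)}=0$: traces of commutators vanish, with the $\sss$-coefficient factored out. Then on a general element I would set
\[
D\bigl((a_1\otimes\cdots\otimes a_n)\otimes f\otimes u\bigr)=\sum_k (a_1\otimes\cdots\otimes\delta(a_k)\otimes\cdots\otimes a_n)\otimes f\otimes u+(a_1\otimes\cdots\otimes a_n)\otimes D(f)\otimes u,
\]
where the $\sss$-component of $\delta(a_k)$ is absorbed into the middle factor. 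This $D$ is visibly an $\SSS$-bimodule map (it is permutation-equivariant on the left and ignores $u$) and a derivation of the product. It commutes with $\1$ because $\delta(1)=0$, and its value on $\widehat a$ is $\delta(a)$.

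\emph{Main obstacle.} The hard step is verifying $D\pi=\pi D$ on general elements. This requires a case split according to $u(1)=1$ or $u(1)>1$. In the first case, $\pi$ moves $a_1$ into the trace factor, and compatibility is exactly the definition $D(\overline{a_1})=\pi\delta(a_1)$. In the second case, $\pi$ multiplies $a_1$ into the $u(1)$-th slot, and compatibility reduces to the Leibniz rule \eqref{Eq:delta-Leibniz} for $a_1a_{u(1)}$, plus the fact that $\sss$-coefficients commute past $m_{1,u(1)}$. The canonical projection $u_*$ is the same on both sides, so it causes no trouble.
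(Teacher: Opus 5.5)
Your proposal is correct and follows essentially the same route as the paper. You obtain the Leibniz rule for $\delta_D$ from $\pi(\widehat a\,\widehat b\cdot(12))=\widehat{ab}$; the multiplication order you flag checks out, because $m_{1,k}$ multiplies the $k$-th factor by the first one on the left, so $m_{1,2}(a\otimes b)=ab$. Injectivity comes from generating $\O(\A)$ by the $\widehat a$, their images under $\pi$ and the right $S_n$-action, and surjectivity comes from the same explicit formula for $D_\delta$, with the case split on $u(1)$ for $\pi$-compatibility and $\delta(1)=0$ for compatibility with $\widehat{1}$.
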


\begin{proof}
    The product in $\O(\A)$ gives $\widehat a\,\widehat b=(a\otimes b)\otimes\mathds{1}\otimes\id_2\in\O(\A)^{(2)}$ for any $a,b\in\A$. The definition of $\pi$ yields
    \begin{equation} \label{Eq:hat-prod-pi}
        \pi\bigl(\widehat a\,\widehat b\cdot(12)\bigr)
        =\pi\bigl((a\otimes b)\otimes\mathds{1}\otimes(12)\bigr)
        =m_{1,2}(a\otimes b)\otimes\mathds{1}\otimes\id_1=ab\otimes\mathds{1}\otimes\id_1=\widehat{ab}.
    \end{equation}
    The same computation, with $\mathds{1}$ replaced by any $h\in\sss$ and $(a,b)$ replaced by any $(c,d)\in\A\times\A$, gives
    \begin{equation} \label{Eq:pi-on-deg-2}
        \pi\bigl((c\otimes d)\otimes h\otimes(12)\bigr)=cd\otimes h\otimes\id_1\,\in\,\A\otimes\sss=\O(\A)^{(1)}.
    \end{equation}

    \medskip

    Apply $D$ to \eqref{Eq:hat-prod-pi}. The compatibility $D\circ\pi=\pi\circ D$, the $\SSS$-equivariance of $D$, together with the Leibniz rule, yield
    \begin{equation*}
        \delta_D(ab)=D(\widehat{ab})=\pi\bigl(D(\widehat a\,\widehat b)\cdot(12)\bigr)
        =\pi\bigl(\bigl(\delta_D(a)\,\widehat b+\widehat a\,\delta_D(b)\bigr)\cdot(12)\bigr).
    \end{equation*}
    Write $\delta_D(a)=\sum_i a'_i\otimes f_i$ and $\delta_D(b)=\sum_j b'_j\otimes g_j$ with $a'_i,b'_j\in\A$ and $f_i,g_j\in\sss$ (the trivial factor $\id_1\in S_1$ is implicit). Computing the products in $\O(\A)$,
    \begin{align*}
        \delta_D(a)\,\widehat b\cdot(12)&=\textstyle\sum_i (a'_i\otimes b)\otimes f_i\otimes(12), \\
        \widehat a\,\delta_D(b)\cdot(12)&=\textstyle\sum_j (a\otimes b'_j)\otimes g_j\otimes(12),
    \end{align*}
    and applying \eqref{Eq:pi-on-deg-2} summand-by-summand gives
    \begin{equation*}
        \delta_D(ab)=\sum_i(a'_ib)\otimes f_i\;+\;\sum_j(ab'_j)\otimes g_j=\delta_D(a)\cdot b+a\cdot\delta_D(b),
    \end{equation*}
    where the last equality is the definition \eqref{Eq:A-bimod-on-OA1} of the $\A$-bimodule structure on $\A\otimes\sss$. This is precisely the Leibniz rule \eqref{Eq:delta-Leibniz} for $\delta_D$.

    We show how to recover the values of $D$ on each homogeneous component $\O(\A)^{(n)}$ from $\delta_D$.

    \begin{itemize}[leftmargin=3ex]
        \item \emph{Restriction to $\O(\A)^{(0)}=\sss$.} For each $a\in\A$, the definition of $\pi$ on $\O(\A)^{(1)}$ gives $\pi(\widehat a)=\mathds{1}\otimes\overline a\otimes\id_0=\overline a\in\A_\natural\subset\sss$. Combining this with $D\circ\pi=\pi\circ D$, we obtain
        \begin{equation} \label{Eq:D-on-Anat-from-delta}
            D(\overline a)=D(\pi(\widehat a))=\pi(\delta_D(a))\qquad\text{for all }a\in\A.
        \end{equation}
        Since $\sss$ is generated as a $\kk$-algebra by the set $\{\overline a:a\in\A\}$ and the restriction $D|_{\sss}$ is a derivation of $\sss$, the values \eqref{Eq:D-on-Anat-from-delta} on these generators determine $D|_{\sss}$ through the Leibniz rule.

        \item \emph{Restriction to $\O(\A)^{(1)}=\A\otimes\sss$.} Every element of $\O(\A)^{(1)}$ is a $\kk$-linear combination of products $f\,\widehat a$ with $f\in\sss$ and $a\in\A$. By the Leibniz rule,
        \begin{equation*}
            D(f\,\widehat a)=D(f)\,\widehat a+f\,\delta_D(a),
        \end{equation*}
        so $D|_{\O(\A)^{(1)}}$ is determined by $D|_{\sss}$ (computed in the previous item) and $\delta_D$.

        \item \emph{Restriction to $\O(\A)^{(n)}$ for $n\geq 2$.} Every element of $\O(\A)^{(n)}$ is a $\kk$-linear combination of elements of the form $f\,\widehat{a_1}\cdots\widehat{a_n}\cdot u$ with $a_1,\ldots,a_n\in\A$, $f\in\sss$, and $u\in S_n$, because
        \begin{equation*}
            f\,\widehat{a_1}\,\widehat{a_2}\cdots\widehat{a_n}\cdot u=(a_1\otimes a_2\otimes\ldots\otimes a_n)\otimes f\otimes u.
        \end{equation*}
        Iterating the Leibniz rule yields
        \begin{equation*}
            D\bigl(f\,\widehat{a_1}\cdots\widehat{a_n}\cdot u\bigr)=D(f)\,\widehat{a_1}\cdots\widehat{a_n}\cdot u+\sum_{k=1}^{n}f\,\widehat{a_1}\cdots\widehat{a_{k-1}}\,\delta_D(a_k)\,\widehat{a_{k+1}}\cdots\widehat{a_n}\cdot u,
        \end{equation*}
        which expresses $D$ on $\O(\A)^{(n)}$ in terms of $D|_{\sss}$ and $\delta_D$.
    \end{itemize}

    Conversely, given a derivation $\delta:\A\to\A\otimes\sss$, we construct $D_\delta:\O(\A)\to\O(\A)$ satisfying $\delta_{D_\delta}=\delta$ and verify that $D_\delta$ has the properties listed in the proposition. Throughout we use the Sweedler-style notation
    \begin{equation} \label{Eq:sweedler-delta}
        \delta(a)=a'\otimes \overline{a''}\in\A\otimes\sss\qquad(a'\in\A,\ \overline{a''}\in\sss),
    \end{equation}
    with summation over an implicit index suppressed. 
    For the bimodule structure \eqref{Eq:A-bimod-on-OA1}, we can write $b\cdot\delta(c)=bc'\otimes \overline{c''}$ and $\delta(c)\cdot b=c'b\otimes \overline{c''}$.

    The $\kk$-linear map $\A\to\sss$, $a\mapsto\overline{a'}\cdot \overline{a''}$, descends to a $\kk$-linear map $\A_\natural=\A/[\A,\A]\to\sss$. Indeed, for $b,c\in\A$, the Leibniz rule \eqref{Eq:delta-Leibniz} and the cyclic identity $\overline{xy}=\overline{yx}$ in $\A_\natural$ (for $x,y\in \A$) give
    \begin{equation*}
        \pi\bigl(\delta(bc)-\delta(cb)\bigr)
        =\pi\bigl(bc'\otimes \overline{c''}+b'c\otimes \overline{b''}-cb'\otimes \overline{b''}-c'b\otimes \overline{c''}\bigr)
        =(\overline{bc'}-\overline{c'b})\cdot \overline{c''}+(\overline{b'c}-\overline{cb'})\cdot \overline{b''}=0.
    \end{equation*}
    This induced map $\A_\natural\to\sss$ extends uniquely to a $\kk$-derivation $\widetilde D_\delta:\sss\to\sss$ characterized by
    \begin{equation} \label{Eq:Dtilde-on-Anat}
        \widetilde D_\delta(\overline a)=\overline{a'}\cdot\overline{a''},\qquad a\in\A.
    \end{equation}

    Define a $\kk$-linear map $D_\delta:\O(\A)\to\O(\A)$ on each homogeneous component $\O(\A)^{(n)}$ by
    \begin{equation}
        \begin{aligned}
           \label{Eq:Ddelta-formula}
        D_\delta\bigl((a_1\otimes\ldots\otimes a_n)\otimes f\otimes u\bigr)
        \;:=&\;\sum_{k=1}^{n}\bigl(a_1\otimes\ldots\otimes a_{k-1}\otimes a_k'\otimes a_{k+1}\otimes\ldots\otimes a_n\bigr)\otimes\bigl(\overline{a_k''}\cdot f\bigr)\otimes u \\
        &+(a_1\otimes\ldots\otimes a_n)\otimes\widetilde D_\delta(f)\otimes u. 
        \end{aligned}
    \end{equation}
    The right-hand side is $\kk$-multilinear in $a_1,\ldots,a_n$ and $\kk$-linear in $f$, so \eqref{Eq:Ddelta-formula} defines a $\kk$-linear map of degree zero. Setting $n=1$, $a_1=a$, $f=\mathds{1}$, $u=\id_1$ and using $\widetilde D_\delta(\mathds{1})=0$ gives
    \begin{equation*}
        D_\delta(\widehat a)=a'\otimes \overline{a''}\otimes\id_1=\delta(a),
    \end{equation*}
    so $\delta_{D_\delta}=\delta$. It remains to verify that $D_\delta$ is an $\SSS$-bimodule derivation of $\O(\A)$ compatible with both $\pi$ and $\1$.

    For $\alpha=(a_1\otimes\ldots\otimes a_n)\otimes f\otimes u$ and $\beta=(b_1\otimes\ldots\otimes b_m)\otimes g\otimes v$ one has
    \begin{equation*}
        \alpha\beta=(a_1\otimes\ldots\otimes a_n\otimes b_1\otimes\ldots\otimes b_m)\otimes(f\cdot g)\otimes(u\times v).
    \end{equation*}
Apply \eqref{Eq:Ddelta-formula} to $\alpha\beta\in\O(\A)^{(n+m)}$. The substitutions at positions $k\in\{1,\ldots,n\}$ act on the $a$-block, and the resulting summands factor as $\bigl(\text{$k$-th summand of $D_\delta(\alpha)$}\bigr)\cdot\beta$; the substitutions at positions $k\in\{n+1,\ldots,n+m\}$ act on the $b$-block and factor as $\alpha\cdot\bigl(\text{$(k-n)$-th summand of $D_\delta(\beta)$}\bigr)$. Finally, the Leibniz rule $\widetilde D_\delta(fg)=\widetilde D_\delta(f)\,g+f\,\widetilde D_\delta(g)$ for the derivation $\widetilde D_\delta$ splits the $\widetilde D_\delta$-summand of $D_\delta(\alpha\beta)$ as $\bigl(\widetilde D_\delta\text{-summand of }D_\delta(\alpha)\bigr)\cdot\beta+\alpha\cdot\bigl(\widetilde D_\delta\text{-summand of }D_\delta(\beta)\bigr)$. Summing these three contributions yields $D_\delta(\alpha\beta)=D_\delta(\alpha)\,\beta+\alpha\,D_\delta(\beta)$. Right $S_n$-equivariance is immediate from \eqref{Eq:Ddelta-formula}: the permutation $u$ appears only in the last tensor slot. For left $S_n$-equivariance, observe that under $\sigma\in S_n$ the entry of $\sigma\cdot\alpha$ at position $\sigma(k)$ equals $a_k$, so the $\sigma(k)$-th summand of $D_\delta(\sigma\cdot\alpha)$ has $a_k'$ at position $\sigma(k)$ and $a_k''$ in the $\sss$-slot; 
%reindexing by $k'=\sigma(k)$, 
this is precisely $\sigma\cdot\bigl(\text{$k$-th summand of }D_\delta(\alpha)\bigr)$.

    Let us prove $D_\delta\circ\pi=\pi\circ D_\delta$. On $\O(\A)^{(0)}=\sss$ both sides vanish, since $\pi$ vanishes in degree zero and $D_\delta$ preserves the grading; we may therefore assume $n\geq 1$ and verify the identity on $\alpha=(a_1\otimes\ldots\otimes a_n)\otimes f\otimes u\in\O(\A)^{(n)}$ by case analysis on $u(1)$.

    \textit{Case $u(1)=1$.} In that case 
    \begin{equation*}
        \pi(\alpha)=(a_2\otimes\ldots\otimes a_n)\otimes(\overline{a_1}\cdot f)\otimes u_*.
    \end{equation*}
Applying \eqref{Eq:Ddelta-formula} with \eqref{Eq:Dtilde-on-Anat} and the Leibniz rule for $\widetilde D_\delta$ (so that $\widetilde D_\delta(\overline{a_1}\cdot f)=\overline{a_1'}\cdot \overline{a_1''}\cdot f+\overline{a_1}\cdot \widetilde D_\delta(f)$) gives
    \begin{align*}
        D_\delta(\pi(\alpha))
        &=\sum_{k=2}^{n}(a_2\otimes\ldots\otimes a_k'\otimes\ldots\otimes a_n)\otimes(\overline{a_k''}\cdot \overline{a_1}\cdot f)\otimes u_* \\
        &\quad+(a_2\otimes\ldots\otimes a_n)\otimes(\overline{a_1'}\cdot \overline{a_1''}\cdot f)\otimes u_* \\
        &\quad+(a_2\otimes\ldots\otimes a_n)\otimes(\overline{a_1}\cdot \widetilde D_\delta(f))\otimes u_*.
    \end{align*}
    Each summand of $D_\delta(\alpha)$ in \eqref{Eq:Ddelta-formula} still has $u(1)=1$, so $\pi$ acts on it via the first case of its definition. The $k=1$ summand contributes the middle term above, the $k\geq 2$ summands contribute the first sum, and the $\widetilde D_\delta(f)$-summand contributes the last term. Thus $\pi(D_\delta(\alpha))=D_\delta(\pi(\alpha))$.

    \textit{Case $u(1)=j>1$.} Then
    \begin{equation*}
        \pi(\alpha)=(a_2\otimes\ldots\otimes a_{j-1}\otimes a_1a_j\otimes a_{j+1}\otimes\ldots\otimes a_n)\otimes f\otimes u_*.
    \end{equation*}
    Apply \eqref{Eq:Ddelta-formula} to $\pi(\alpha)$: at position $j-1$, the substitution uses $\delta(a_1a_j)=a_1a_j'\otimes \overline{a_j''}+a_1'a_j\otimes \overline{a_1''}$ from the Leibniz rule for $\delta$, producing two terms; at each position $k-1$ with $k\in\{2,\ldots,j-1\}\cup\{j+1,\ldots,n\}$, the substitution uses $\delta(a_k)=a_k'\otimes \overline{a_k''}$. Conversely, each of the $n+1$ summands of $D_\delta(\alpha)$ has $u(1)=j>1$, so $\pi$ acts on it via $m_{1,j}$ on the $\A^{\otimes n}$-slot. Computing $\pi(D_\delta(\alpha))$ summand by summand:
    \begin{itemize}
        \item the $k=1$ summand maps to $(a_2\otimes\ldots\otimes a_{j-1}\otimes a_1'a_j\otimes a_{j+1}\otimes\ldots\otimes a_n)\otimes \overline{a_1''}\cdot f\otimes u_*$;
        \item the $k=j$ summand maps to $(a_2\otimes\ldots\otimes a_{j-1}\otimes a_1a_j'\otimes a_{j+1}\otimes\ldots\otimes a_n)\otimes \overline{a_j''}\cdot f\otimes u_*$;
        \item for $k\in\{2,\ldots,j-1,j+1,\ldots,n\}$, the $k$-th summand maps to the element of $\O(\A)^{(n-1)}$ obtained from $m_{1,j}(a_1\otimes\ldots\otimes a_n)$ by replacing $a_k$ at position $k-1$ with $a_k'$, with $\sss$-slot $\overline{a_k''}\cdot f$ and $S_{n-1}$-slot $u_*$;
        \item the $\widetilde D_\delta(f)$-summand maps to $m_{1,j}(a_1\otimes\ldots\otimes a_n)\otimes\widetilde D_\delta(f)\otimes u_*$.
    \end{itemize}
    By the Leibniz expansion $\delta(a_1a_j)=a_1a_j'\otimes \overline{a_j''}+a_1'a_j\otimes \overline{a_1''}$, the first two items form together the substitution at position $j-1$ in $D_\delta(\pi(\alpha))$; for $k\neq 1,j$, the third item is the substitution at position $k-1$ in $D_\delta(\pi(\alpha))$; the fourth is the $\widetilde D_\delta(f)$-summand of $D_\delta(\pi(\alpha))$. Hence $\pi(D_\delta(\alpha))=D_\delta(\pi(\alpha))$ in this case as well.

    Finally, let us prove that $D_\delta\circ\1=\1\circ D_\delta$. The Leibniz rule \eqref{Eq:delta-Leibniz} applied to $1\cdot 1$ gives $\delta(1)=2\delta(1)$, so $\delta(1)=0$. By \eqref{Eq:Ddelta-formula}, $D_\delta(\widehat 1)=\delta(1)=0$, and the Leibniz rule for $D_\delta$ then yields, for any $\alpha\in\O(\A)$,
    \begin{equation*}
        D_\delta(\widehat 1\cdot\alpha)=D_\delta(\widehat 1)\cdot\alpha+\widehat 1\cdot D_\delta(\alpha)=\widehat 1\cdot D_\delta(\alpha),
    \end{equation*}
    i.e., $D_\delta\circ\1=\1\circ D_\delta$.
\end{proof}

\medskip

We now identify the $\id$-adapted double transposed Poisson algebras of Definition~\ref{Def:id-trDPA} as the special case of Definition~\ref{Def:trDPA} for which the underlying derivation $\delta$ from Proposition~\ref{Pr:DerOA-delta} takes values in $\A\otimes\kk\mathds{1}\subset\A\otimes\sss$.

Extend the embedding \eqref{Eq:A-hat-embed} to all tensor powers by setting, for $\alpha=a_1\otimes\ldots\otimes a_n\in\A^{\otimes n}$,
\begin{equation} \label{Eq:hat-tensor}
    \widehat\alpha:=\alpha\otimes\mathds{1}\otimes\id_n\;\in\;\O(\A)^{(n)},\qquad n\geq 0.
\end{equation}
The assignment $\alpha\mapsto\widehat\alpha$ defines an injective homomorphism of associative unital $\kk$-algebras $\mbf T\A\hookrightarrow\O(\A)$, with image
\begin{equation*}
    \widehat{\mbf T\A}:=\bigoplus_{n\geq 0}\widehat{\A^{\otimes n}}=\bigoplus_{n\geq 0}\A^{\otimes n}\otimes\kk\mathds{1}\otimes\kk\id_n\subset\O(\A).
\end{equation*}

\begin{proposition} \label{Pr:dtPA-id-adapted}
    Let $\A$ be an associative unital algebra equipped with a double transposed Poisson algebra structure $\dsq{-,-}$ in the sense of Definition~\ref{Def:trDPA}, and let $\delta:\A\to\A\otimes\sss$ be the associated derivation supplied by Theorem~\ref{Thm:DerDtPA} and Proposition~\ref{Pr:DerOA-delta}. The following conditions are equivalent.
    \begin{enumerate}[label=\upshape(\roman*),leftmargin=3ex]
        \item The bracket restricts to $\widehat{\mbf T\A}$, i.e., $\dsq{\widehat\alpha,\widehat\beta}\in\widehat{\A^{\otimes(r+s)}}$ for all $\alpha\in\A^{\otimes r}$ and $\beta\in\A^{\otimes s}$ with $r,s\geq 0$.
        \item $\delta(\A)\subset\A\otimes\kk\mathds{1}$.
        \item There exists a unique $D\in\Der(\A)$ such that $\delta(a)=D(a)\otimes\mathds{1}$ for all $a\in\A$.
    \end{enumerate}
    When these equivalent conditions hold, by \textup{(i)} the bracket $\dsq{-,-}$ on $\O(\A)$ preserves the subalgebra $\widehat{\mbf T\A}$. Under the identification of $\mbf T\A$ with $\widehat{\mbf T\A}$, this restriction yields a family of $\kk$-bilinear operations
    \begin{equation} \label{Eq:id-from-double}
       \dsq{-,-}^{(r,s)}\colon\A^{\otimes r}\otimes\A^{\otimes s}\longrightarrow\A^{\otimes (r+s)} \qquad (r,s\geq 1),
    \end{equation}
    which is the  $\id$-adapted double transposed Poisson algebra structure on $\A$ attached to $D$ via \eqref{Eq:dsq-rs}; explicitly,
    \begin{equation*}
        \dsq{\alpha,\beta}^{(r,s)}=\alpha\otimes D(\beta)-D(\alpha)\otimes\beta,\qquad \alpha\in\A^{\otimes r},\ \beta\in\A^{\otimes s},
    \end{equation*}
    where on the right $D$ is extended to $\A^{\otimes n}$ via \eqref{Eq:Dext}. Conversely, every  $\id$-adapted double transposed Poisson algebra structure on $\A$ arises in this way from a unique double transposed Poisson algebra structure on $\A$ satisfying the equivalent conditions above.
\end{proposition}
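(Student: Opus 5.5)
The plan is to exploit the explicit description of the bracket supplied by Theorem~\ref{Thm:DerDtPA} and Proposition~\ref{Pr:DerOA-delta}. Write $D$ for the derivation of $\O(\A)$ attached to $\dsq{-,-}$ and $\delta=\delta_D$. By \eqref{Eq:BracketFromDer}, $\dsq{\widehat\alpha,\widehat\beta}=\widehat\alpha\,D(\widehat\beta)-D(\widehat\alpha)\,\widehat\beta$, and by the Leibniz rule $D(\widehat{a_1\otimes\cdots\otimes a_n})=\sum_k \widehat{a_1}\cdots\delta(a_k)\cdots\widehat{a_n}$.

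\emph{(ii)$\Rightarrow$(iii).} Write $\delta(a)=D(a)\otimes\mathds{1}$. This defines $D$ uniquely because $\A\to\A\otimes\kk\mathds{1}$, $x\mapsto x\otimes\mathds{1}$, is injective. The Leibniz rule \eqref{Eq:delta-Leibniz}, read through the bimodule structure \eqref{Eq:A-bimod-on-OA1}, gives $D\in\Der(\A)$.

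\emph{(iii)$\Rightarrow$(i).} Since $\widetilde D_\delta(\overline a)=\overline{D(a)}\cdot\overline 1$, we are not in $\kk\mathds{1}$ in general. The point, however, is that $\widehat\alpha$ has $f=\mathds{1}$, so only the terms $\widehat{a_1}\cdots\widehat{D(a_k)}\cdots\widehat{a_n}$ appear. Hence $D(\widehat\alpha)=\widehat{D(\alpha)}$ with $D$ extended by \eqref{Eq:Dext}, and $\dsq{\widehat\alpha,\widehat\beta}=\widehat{\alpha\otimes D(\beta)-D(\alpha)\otimes\beta}\in\widehat{\A^{\otimes(r+s)}}$. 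This computation simultaneously yields the explicit formula in the statement.

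\emph{(i)$\Rightarrow$(ii).} Take $\alpha=1\in\A^{\otimes 0}$, so that $\widehat\alpha=\mathds{1}_{\O(\A)}$, and $\beta=a$ with $s=1$. Then $\delta(a)=D(\widehat a)=\dsq{\mathds{1}_{\O(\A)},\widehat a}\in\widehat{\A}=\A\otimes\kk\mathds{1}\otimes\id_1$. This is the reason (i) includes $r,s\geq0$.

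For the restricted family, Proposition~\ref{Prop1} shows that the family \eqref{Eq:dsq-rs} built from $D\in\Der(\A)$ is an $\id$-adapted double transposed Poisson algebra. For the converse, Proposition~\ref{Pr:DerIdAdapt} attaches a unique $D\in\Der(\A)$ to any $\id$-adapted structure. We then set $\delta(a):=D(a)\otimes\mathds{1}$, which is a derivation $\A\to\A\otimes\sss$, and apply Proposition~\ref{Pr:DerOA-delta} to obtain a compatible $\SSS$-bimodule derivation $D_\delta$ of $\O(\A)$. Define $\dsq{\alpha,\beta}:=\alpha D_\delta(\beta)-D_\delta(\alpha)\beta$; by the computation above it restricts to the given family.

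The main obstacle is checking that this bracket satisfies Definition~\ref{Def:trDPA}.

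\emph{\eqref{f1}.} This follows from graded commutativity: $\operatorname{Ad}((12)^{|\alpha|,|\beta|})(\alpha D(\beta))=D(\beta)\alpha$, using that $D$ is an $\SSS$-bimodule map of degree $0$.

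\emph{\eqref{f-jacobi}.} Expand into the four $D^2$-type terms as in the proof of Proposition~\ref{Prop1}. Then use $\operatorname{Ad}$ of the cyclic block permutations together with graded commutativity to match the terms, which cancel pairwise.

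\emph{\eqref{f2}.} This is the same two-line computation as in Proposition~\ref{Prop1}, with $\operatorname{Ad}((12)^{|\alpha|,|\gamma|})(\alpha\gamma D(\beta))=\gamma\alpha D(\beta)$.

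\emph{\eqref{f3}.} For the second identity, use $D_\delta(\1\alpha)=\1 D_\delta(\alpha)$ and $\1(\alpha)=\widehat1\,\alpha$, which give $\dsq{\widehat1\alpha,\beta}=\widehat1\dsq{\alpha,\beta}$. The $\pi$ identity is the delicate one, since $\pi$ is not an algebra map. Here I would check that $\pi(\alpha\gamma)=\pi(\alpha)\gamma$ whenever $|\alpha|\geq1$. This holds because $\pi$ only touches the first tensor slot and $u$, $u\times v$ restricted to the first block, and $(u\times v)_*=u_*\times v$ when $u\in S_n$ with $n\geq1$. With this, $\pi(\alpha D\beta-D\alpha\,\beta)=\pi(\alpha)D\beta-\pi(D\alpha)\beta$, and $\pi D=D\pi$ concludes.

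Uniqueness of the double structure follows from the bijection in Proposition~\ref{Pr:DerOA-delta} together with Theorem~\ref{Thm:DerDtPA}.
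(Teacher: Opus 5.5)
Your proof is correct. For the equivalences and the explicit formula it follows the paper. The paper goes through the auxiliary statement that $D_\delta$ preserves $\widehat{\mbf T\A}$, but the content is the same: test (i) on $\mathds{1}_{\O(\A)}$ and $\widehat a$, then use $\widetilde D_\delta(\mathds{1})=0$ to get $D_\delta(\widehat\alpha)=\widehat{D(\alpha)}$.

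The real difference is in the converse. The paper forms $x\,D_\delta(y)-D_\delta(x)\,y$ and only checks that it restricts to the given $\id$-adapted family. It tacitly uses that every compatible $\SSS$-bimodule derivation of $\O(\A)$ defines a double transposed Poisson bracket, which is a converse of Theorem~\ref{Thm:DerDtPA} that the paper does not verify. You do verify it:
\begin{itemize}
\item \eqref{f1}, the Jacobi identity and \eqref{f2} follow from graded commutativity $\operatorname{Ad}((12)^{|x|,|y|})(xy)=yx$ together with $D_\delta$ commuting with $\operatorname{Ad}$.
\item The $\widehat 1$-part of \eqref{f3} follows from $D_\delta(\widehat 1)=0$.
\item The $\pi$-part follows from $\pi(\alpha\gamma)=\pi(\alpha)\gamma$ for $|\alpha|\geq1$. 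This lemma is right: $(u\times v)(1)=u(1)$, $m_{1,u(1)}$ acts inside the first block, and $(u\times v)_*=u_*\times v$.
\end{itemize}
So your treatment of the converse is more complete than the paper's.

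Three small points.

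\begin{itemize}
\item You should state explicitly that the $\pi$-identity in \eqref{f3} is only imposed for $|\alpha|\geq1$. At $|\alpha|=0$ it fails for every nonzero bracket. Indeed $\pi(\mathds{1}_{\O(\A)})=0$, while $\pi\dsq{\mathds{1}_{\O(\A)},\beta}=\pi D(\beta)$. Since $\pi D\bigl((1\otimes c)\otimes\mathds{1}\otimes(12)\bigr)=\delta(c)$, the literal reading would force $\delta=0$. Even in the $\id$-adapted case, $\pi\dsq{\mathds{1}_{\O(\A)},\widehat b}=\overline{D(b)}$ is nonzero in general. Your lemma covers exactly the meaningful range, and this is the only reading under which the statement holds.
\item Definition~\ref{Def:trDPA} also requires the bracket to be an $\SSS$-bimodule map. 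This is immediate from $D_\delta$ being one and the product being $(S_n\times S_m)$-equivariant, but it should be said.
\item There is a slip: $\widetilde D_\delta(\overline a)=\overline{D(a)}$, not $\overline{D(a)}\cdot\overline 1$. The $\sss$-factor of $\delta(a)$ is the unit $\mathds{1}$ of $\sss$, not the class of $1\in\A$ in $\A_\natural$. Your conclusion that this element need not lie in $\kk\mathds{1}$ is unaffected.
\end{itemize}
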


\begin{proof}
    The equivalence \textup{(iii)}$\Leftrightarrow$\textup{(ii)} is clear. 

Next, we start by showing that \textup{(i)} is equivalent to 
\begin{enumerate}
    \item[\textup{(i')}] The derivation $D_\delta$ obtained in Theorem \ref{Thm:DerDtPA} (which is defined by $\delta$ through \eqref{Eq:Ddelta-formula}) preserves $\widehat{\mbf T\A}$.
\end{enumerate}

Recall from Theorem~\ref{Thm:DerDtPA} that $\dsq{x,y}=x\,D_\delta(y)-D_\delta(x)\,y$ for $x,y\in\O(\A)$. If $D_\delta$ preserves $\widehat{\mbf T\A}$ then so does the bracket, since $\widehat{\mbf T\A}$ is a subalgebra. 
Conversely, $\mathds{1}_{\O(\A)}\in\widehat{\mbf T\A}$ and the identity $\dsq{\mathds{1}_{\O(\A)},y}=D_\delta(y)$ shows that $D_\delta(\widehat{\mbf T\A})\subset\widehat{\mbf T\A}$.  

We now show \textup{(i')}$\Leftrightarrow$\textup{(ii)}: specializing \eqref{Eq:Ddelta-formula} with $f=\mathds{1}$, $u=\id_n$ and using $\widetilde D_\delta(\mathds{1})=0$, we obtain (with $\delta(a_k)=:a_k'\otimes \overline{a_k''}$ as in \eqref{Eq:sweedler-delta})
    \begin{equation} \label{Eq:Ddelta-on-id}
       D_\delta(\widehat\alpha)=\sum_{k=1}^{n}(a_1\otimes\ldots\otimes a_k'\otimes\ldots\otimes a_n)\otimes \overline{a_k''}\otimes\id_n
    \end{equation}
    for $\alpha=a_1\otimes\ldots\otimes a_n$. Setting $n=1$ gives $D_\delta(\widehat a)=\delta(a)\otimes\id_1$, so \textup{(i')} forces $\delta(a)\in\A\otimes\kk\mathds{1}$ for all $a\in\A$. Conversely, if \textup{(ii)} holds then every $\overline{a_k''}$ in \eqref{Eq:Ddelta-on-id} lies in $\kk\mathds{1}$, and $D_\delta$ preserves $\widehat{\mbf T\A}$.

    Suppose now that the equivalent conditions hold and write $\delta(a)=D(a)\otimes\mathds{1}$ as in \textup{(iii)}. Then \eqref{Eq:Ddelta-on-id} reads $D_\delta(\widehat\alpha)=\widehat{D(\alpha)}$ for every $\alpha\in\A^{\otimes n}$, where on the right $D$ acts on $\A^{\otimes n}$ via \eqref{Eq:Dext}. Since $\widehat{(-)}$ is an algebra map, the bracket on $\widehat{\mbf T\A}$ takes the form
    \begin{equation*}
       \dsq{\widehat\alpha,\widehat\beta}=\widehat\alpha\,\widehat{D(\beta)}-\widehat{D(\alpha)}\,\widehat\beta=\widehat{\big(\alpha\otimes D(\beta)-D(\alpha)\otimes\beta\big)},
    \end{equation*}
    which under \eqref{Eq:hat-tensor} is precisely \eqref{Eq:dsq-rs}. The resulting family $\dsq{-,-}^{(r,s)}$ is an $\id$-adapted double transposed Poisson algebra structure on $\A$ by Proposition~\ref{Prop1}. 

    Conversely, given an $\id$-adapted structure on $\A$, Proposition~\ref{Pr:DerIdAdapt} extracts a unique $D\in\Der(\A)$ with $\dsq{\alpha,\beta}^{(r,s)}=\alpha\otimes D(\beta)-D(\alpha)\otimes\beta$. Setting $\delta(a):=D(a)\otimes\mathds{1}$ defines a derivation $\delta:\A\to\A\otimes\sss$ to which Proposition~\ref{Pr:DerOA-delta} associates a derivation $D_\delta$ of $\O(\A)$; the calculation $\dsq{\widehat\alpha,\widehat\beta}=\widehat{\alpha\otimes D(\beta)-D(\alpha)\otimes\beta}$ established above shows that the restriction of this bracket to $\widehat{\mbf T\A}$ recovers the given $\id$-adapted brackets. Uniqueness then follows by a chain of forced choices: Proposition~\ref{Pr:DerIdAdapt} extracts $D$ uniquely from the $\id$-adapted bracket, condition~\textup{(iii)} forces $\delta=D\otimes\mathds{1}$, and Proposition~\ref{Pr:DerOA-delta} recovers the double transposed Poisson bracket on $\A$ uniquely from $\delta$.
\end{proof}

\subsection{\texorpdfstring{$H_0$}{H0}-transposed Poisson structures} \label{Sec:0-degree}

Let $\A$ be a finitely generated associative $\kk$-algebra. The restriction of a double transposed Poisson bracket $\dsq{-,-}$ on $\A$ (Definition~\ref{Def:trDPA}) to the degree-zero component $\sss=\O(\A)^{(0)}$ produces a bracket $\{-,-\}$ on $\sss$ that satisfies the transposed Poisson axioms together with a further constraint coming from the $\pi$-compatibility at mixed degree, in the spirit of Crawley-Boevey's $H_0$-Poisson structures \cite{crawley-boevey2011poisson}. We package this idea into the following definition.

\begin{definition} \label{Def:trH0P}
    An \emph{$H_0$-transposed Poisson structure} on an associative $\kk$-algebra $\A$ is a transposed Poisson algebra structure $\{-,-\}$ on $\sss$ such that for every $a\in\A$ there exists $d_a\in(\Der(\A)\oplus\kk\cdot\mathrm{id}_\A)\otimes \sss$ with
    \begin{equation} \label{Eq:trH0P}
        \{\overline a,\overline b\}=\pi\bigl(d_a(b)\bigr)\qquad\text{for all }b\in\A,
    \end{equation}
    where $\pi\colon\A\otimes\sss\to\sss$, $x\otimes f\mapsto\overline x\cdot f$.
\end{definition}

Note that \eqref{Eq:trH0P} only depends on the class $\overline{b} \in \A_\natural$ of $b$. 
Equation~\eqref{Eq:trH0P} is the natural transposed analog of Crawley-Boevey's defining property for $H_0$-Poisson structures \cite{crawley-boevey2011poisson}.

\begin{proposition} \label{Thm:0-deg-is-H0P}
    The restriction to $\sss\otimes\sss$ of any double transposed Poisson algebra structure $\dsq{-,-}$ on $\A$ (Definition~\ref{Def:trDPA}) is an $H_0$-transposed Poisson structure on $\A$ with the associated $d_a$ given by
    \begin{equation} \label{Eq:trH0P-witness-dtPA}
        d_a:=\delta\cdot\overline a\;-\;\mathrm{id}_\A\otimes\pi(\delta(a))\;\in\;(\Der(\A)\oplus\kk\cdot\mathrm{id}_\A)\otimes_\kk\sss,
    \end{equation}
    where $\delta\colon\A\to\A\otimes\sss$ is the derivation of Proposition~\ref{Pr:DerOA-delta} and $\delta\cdot\overline a$ denotes $\delta$ post-multiplied on the $\sss$-factor by $\overline a$. 
\end{proposition}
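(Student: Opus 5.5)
The plan is to read off everything from the derivation description of the bracket. By Theorem~\ref{Thm:DerDtPA}, $\dsq{x,y}=x\,D(y)-D(x)\,y$ with $D=\dsq{\mathds{1}_{\O(\A)},-}$. By Proposition~\ref{Pr:DerOA-delta}, $D$ is governed by the derivation $\delta\colon\A\to\A\otimes\sss$, $\delta(a)=D(\widehat a)$. The proof then has two parts: first show that the degree-zero restriction is a transposed Poisson structure on $\sss$, then verify the witness formula \eqref{Eq:trH0P-witness-dtPA} by a direct computation on generators $\overline a,\overline b$.

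\emph{Transposed Poisson structure on $\sss$.} Since $\dsq{-,-}$ maps $\O(\A)^{(n)}\otimes\O(\A)^{(m)}$ to $\O(\A)^{(n+m)}$, it restricts to a bilinear operation $\{-,-\}$ on $\O(\A)^{(0)}=\sss$. For $|\alpha|=|\beta|=|\gamma|=0$, all permutations $(12)^{0,0}$, $(123)^{0,0,0}$, $(132)^{0,0,0}$ lie in $S_0$ and are trivial, so $\operatorname{Ad}$ acts as the identity. Hence \eqref{f1}, \eqref{f-jacobi} and \eqref{f2} become plain skewsymmetry, the Jacobi identity and the transposed Leibniz rule \eqref{Eq:trPA}. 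The product on $\O(\A)^{(0)}$ is the product of the commutative algebra $\sss$, so $(\sss,\cdot,\{-,-\})$ is a transposed Poisson algebra. (Alternatively, this follows from Example~\ref{Ex:baiDer} applied to the derivation $D|_{\sss}$.)

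\emph{The witness $d_a$.} On $\sss$ we have $\{f,g\}=f\,D(g)-D(f)\,g$. By \eqref{Eq:D-on-Anat-from-delta}, $D(\overline b)=\pi(\delta(b))$, where on $\O(\A)^{(1)}=\A\otimes\sss$ the map $\pi$ is exactly $x\otimes f\mapsto\overline x\cdot f$, which is the $\pi$ of Definition~\ref{Def:trH0P}. Therefore
\[
\{\overline a,\overline b\}=\overline a\cdot\pi(\delta(b))-\pi(\delta(a))\cdot\overline b .
\]
On the other hand, with $d_a=\delta\cdot\overline a-\mathrm{id}_\A\otimes\pi(\delta(a))$ we get $d_a(b)=\delta(b)\cdot\overline a-b\otimes\pi(\delta(a))$. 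Since $\pi$ is $\sss$-linear in the second factor,
\[
\pi(d_a(b))=\pi(\delta(b))\,\overline a-\overline b\,\pi(\delta(a)).
\]
By commutativity of $\sss$, this equals $\{\overline a,\overline b\}$, which is \eqref{Eq:trH0P}. (Note that $\{-,-\}$ on all of $\sss$ is determined by its values on the generators $\overline a$ through the transposed Poisson structure, but only generators are required in \eqref{Eq:trH0P}.)

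\emph{Membership of $d_a$.} It remains to check that $d_a\in(\Der(\A)\oplus\kk\,\mathrm{id}_\A)\otimes\sss$. The term $\mathrm{id}_\A\otimes\pi(\delta(a))$ clearly lies in $\kk\,\mathrm{id}_\A\otimes\sss$. For the other term, $\delta\in\Der(\A,\A\otimes\sss)$, and since $\A$ is assumed finitely generated we have $\Der(\A,\A\otimes\sss)\simeq\Der(\A)\otimes\sss$. Right multiplication by $\overline a$ on the $\sss$-factor preserves this space. This identification is the only non-formal point in the argument. I would justify it by noting that a derivation is determined by its values on finitely many generators, so only finitely many elements of a basis of $\sss$ occur in $\delta$; expanding in that basis gives finitely many coefficient maps $\A\to\A$, each of which is a derivation of $\A$.
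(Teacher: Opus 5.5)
Your proof is correct and follows the paper's argument step for step: the $\operatorname{Ad}$-twists are trivial in degree zero, which gives the transposed Poisson axioms on $\sss$, and $\{\overline a,\overline b\}=\overline a\,\pi(\delta(b))-\pi(\delta(a))\,\overline b$ (from Theorem~\ref{Thm:DerDtPA} and $D(\overline b)=\pi(\delta(b))$) is then rewritten as $\pi(d_a(b))$; your coefficient-derivation argument for $d_a\in(\Der(\A)\oplus\kk\cdot\mathrm{id}_{\A})\otimes\sss$ usefully spells out the identification $\Der(\A,\A\otimes\sss)\simeq\Der(\A)\otimes\sss$, which the paper only asserts for finitely generated $\A$. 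One minor correction to your unused parenthetical: the transposed Poisson axioms alone do not determine a bracket on $\sss$ from its values on the $\overline a$, because $\mathds{1}\in\sss$ is not in the span of the generators (for instance, on $\kk[t]$ the brackets coming from $D=0$ and $D=t\,\partial_t$ agree on generators, both giving $\{t,t\}=0$, but differ on $\{\mathds{1},t\}$).
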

\begin{proof}
    The $S_n$-Ad-twists in Definition~\ref{Def:trDPA} act trivially on tensor products of degree-zero elements, so the cyclic skewsymmetry~\eqref{f1}, the Jacobi identity, and the compatibility~\eqref{f2} reduce, for $f,g,h\in\sss$, to
    \begin{align*}
        &\{g,f\}=-\{f,g\},\\
        &\{f,\{g,h\}\}+\{g,\{h,f\}\}+\{h,\{f,g\}\}=0,\\
        &2h\{f,g\}=\{hf,g\}+\{f,hg\},
    \end{align*}
    namely the skewsymmetry, Jacobi identity, and transposed Poisson compatibility~\eqref{Eq:trPA}. 

    Restricting~\eqref{Eq:BracketFromDer} to $\sss\otimes\sss$ gives, with $D:=D_\delta|_{\sss}\in\Der(\sss)$,
    \begin{equation} \label{Eq:0-deg-bracket-from-delta}
        \{\overline a,\overline b\}=\overline a\,D(\overline b)-D(\overline a)\,\overline b.
    \end{equation}
    Equation~\eqref{Eq:D-on-Anat-from-delta} of Proposition~\ref{Pr:DerOA-delta} yields $D(\overline a)=\pi(\delta(a))$ on generators $\overline a\in\A_\natural$ of $\sss$. Substituting and rewriting each summand inside $\pi$,
    \begin{equation*}
        \{\overline a,\overline b\}=\overline a\,\pi(\delta(b))-\pi(\delta(a))\,\overline b=\pi\bigl(\delta(b)\cdot\overline a\bigr)-\pi\bigl(b\otimes\pi(\delta(a))\bigr)=\pi(d_a(b)),
    \end{equation*}
    which completes the proof.
\end{proof}

\begin{theorem}[\cite{procesi1976}] \label{Thm:trace-procesi}
    $\A_N^{\GLN}$ is generated as a $\kk$-algebra by $\tr(\overline a)$ for $a\in\A$.
\end{theorem}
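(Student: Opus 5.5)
The plan is to reduce to the free algebra, where the statement is the first fundamental theorem of invariant theory for $\GLN$ acting on tuples of matrices, and then descend to an arbitrary $\A$ using linear reductivity of $\GLN$ in characteristic zero. This is the approach of \cite{procesi1976}.

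\textbf{Step 1: reduce to the free case.} Choose a set $X$ of generators of $\A$ and a surjection $F:=\kk\langle X\rangle\twoheadrightarrow\A$ of unital algebras.
\begin{itemize}
\item By the presentation \eqref{Eq:AN-def}, $F_N$ is the polynomial ring in the variables $x_{ij}$, for $x\in X$ and $1\leq i,j\leq N$.
\item The map $F\to\A$ induces a surjection $F_N\twoheadrightarrow\A_N$ sending $w_{ij}$ to $\bar w_{ij}$. It is $\GLN$-equivariant for \eqref{Eq:Act}, so its kernel $J$ is a $\GLN$-stable ideal.
\item The action on $F_N$ is rational and locally finite, since it preserves the span of monomials of bounded multidegree. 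Because $\kk$ has characteristic zero, $\GLN$ is linearly reductive.
\item Hence there is a Reynolds operator $F_N\to F_N^{\GLN}$ compatible with equivariant quotients. In particular $F_N^{\GLN}\to(F_N/J)^{\GLN}=\A_N^{\GLN}$ is surjective.
\end{itemize}
Since $\tr(\overline{w})\in F_N$ maps to $\tr(\overline{\bar w})\in\A_N$, it suffices to prove the theorem for $F$. It is enough to show that $F_N^{\GLN}$ is spanned by products $\tr(\overline{w_1})\cdots\tr(\overline{w_k})$ with $w_1,\ldots,w_k$ monomials in $X$.

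\textbf{Step 2: reduce to multilinear invariants.} Conjugation preserves the multidegree in the variables $x\in X$, so every invariant is a sum of multihomogeneous invariants. I then polarize.
\begin{itemize}
\item A multihomogeneous invariant of degree $d_x$ in each variable $x$ comes from a multilinear invariant function of $n=\sum_x d_x$ matrix arguments, by full polarization.
\item Conversely, one recovers it by restitution, i.e.\ by setting the arguments equal and dividing by $\prod_x d_x!$. This division uses characteristic zero.
\end{itemize}
So the problem reduces to describing the multilinear $\GLN$-invariant functions on $\mathrm{End}(V)^{\times n}$, where $V=\kk^N$.

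\textbf{Step 3 (the main obstacle): the multilinear invariants.} The multilinear invariant functions form the space $\big((\mathrm{End}(V)^{\otimes n})^*\big)^{\GLN}$. Using the trace pairing, this space is identified with $\mathrm{End}(V^{\otimes n})^{\GLN}$.
\begin{itemize}
\item By Schur--Weyl duality (the double centralizer theorem, valid in characteristic zero), $\mathrm{End}(V^{\otimes n})^{\GLN}$ is spanned by the images of the permutations $\sigma\in S_n$.
\item Compute the function attached to $\sigma$. For a cycle decomposition $\sigma=(i_1\cdots i_p)(j_1\cdots j_q)\cdots$, it is
\[
(M_1,\ldots,M_n)\longmapsto\operatorname{Tr}(M_{i_1}\cdots M_{i_p})\operatorname{Tr}(M_{j_1}\cdots M_{j_q})\cdots .
\]
\item After restitution, each factor $\operatorname{Tr}$ becomes $\tr(\overline{w})$ for a monomial $w\in F$. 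Indeed $(w)_{ij}$ is the corresponding matrix product entry, by the last relation in \eqref{Eq:AN-def}.
\end{itemize}
Combining Steps 1--3 gives generation of $\A_N^{\GLN}$ by the elements $\tr(\overline a)$.

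The delicate point is the careful bookkeeping between the tensor identification and the matrix entries, which is where the Schur--Weyl input is used. I would invoke that duality as known rather than reprove it.
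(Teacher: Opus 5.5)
Your proposal is correct and is the standard argument. Exactness of $\GLN$-invariants (linear reductivity in characteristic zero) reduces the statement to the free algebra, where polarization and Schur--Weyl duality identify the multilinear invariants with products of traces of monomials; the paper does not prove the statement itself but only cites \cite{procesi1976}, and your route is the classical one behind that citation.
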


The following theorem describes a transposed Poisson structure on $\A_N^{\GLN}$ from an $H_0$-transposed Poisson structure on $\A$.

\begin{theorem} \label{Pr:H0P-trace-tPA}
    Let $\A$ be a unital associative algebra equipped with an $H_0$-transposed Poisson structure $\{-,-\}$ in the sense of Definition~\ref{Def:trH0P}. Then, the operation
    \begin{equation} \label{Eq:trace-formula}
\{-,-\}_N^{\mathrm{tr}}\colon\A_N^{\GLN}\times\A_N^{\GLN} \longrightarrow\A_N^{\GLN},\qquad \{\tr(\overline a),\tr(\overline b)\}_N^{\mathrm{tr}}=\tr\bigl(\{\overline a,\overline b\}\bigr), 
    \end{equation}
    defines a transposed Poisson structure on $\A_N^{\mathrm{GL}_N}$.
\end{theorem}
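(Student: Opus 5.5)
The plan is to make the trace map an algebra homomorphism, push the bracket forward along it, and reduce everything to a single derivation, as in Proposition~\ref{Pr:tPA-unit}. The trace extends to a surjective homomorphism of commutative unital algebras $\tr\colon\sss\to\A_N^{\GLN}$ with $\tr(\mathds{1})=1$ and $\tr(\overline a)=\sum_j a_{jj}$; surjectivity is Theorem~\ref{Thm:trace-procesi}. Formula \eqref{Eq:trace-formula} on generators then has to be read as
\[
\{\tr(f),\tr(g)\}_N^{\mathrm{tr}}:=\tr\{f,g\},\qquad f,g\in\sss .
\]
If this is well defined, i.e.\ if $\{\ker\tr,\sss\}\subset\ker\tr$, then skewsymmetry, Jacobi and the transposed Leibniz rule \eqref{Eq:trPA} descend from $\sss$ because $\tr$ is a surjective algebra map. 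So the whole proof reduces to showing that $\ker\tr$ is a Lie ideal.

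To prove this, I will use that $\sss$ is unital. By Proposition~\ref{Pr:tPA-unit}, $\{f,g\}=f\,D(g)-D(f)\,g$ with $D=\{\mathds{1},-\}\in\Der(\sss)$. Because $\ker\tr$ is an ideal for the product, it suffices to show $D(\ker\tr)\subset\ker\tr$. For that it is enough to find a derivation $E$ of $\A_N^{\GLN}$ with $\tr\circ D=E\circ\tr$ on the generators $\overline b$. Both $\tr\circ D$ and $E\circ\tr$ are $\tr$-derivations $\sss\to\A_N^{\GLN}$, so agreement on generators gives agreement everywhere, and then $\tr\circ D$ kills $\ker\tr$.

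To build $E$, I will apply the $H_0$ condition \eqref{Eq:trH0P} to a fixed $a$, writing $d_a=\sum_i(\delta_i+\lambda_i\,\mathrm{id}_\A)\otimes f_i$. Each $\delta_i\in\Der(\A)$ induces a $\GLN$-equivariant derivation $(\delta_i)_N$ of $\A_N$, which restricts to $\A_N^{\GLN}$ and satisfies $\tr(\overline{\delta_i(b)})=(\delta_i)_N(\tr\overline b)$. Hence
\[
\tr\{\overline a,\overline b\}=\Theta_a\bigl(\tr\overline b\bigr),\qquad \Theta_a:=\sum_i\tr(f_i)\bigl((\delta_i)_N+\lambda_i\bigr).
\]
Taking $a=1$ and using $\tr(\overline 1)=N$, which is invertible in characteristic zero, the identity $\{\overline1,\overline b\}=\overline1\,D(\overline b)-D(\overline1)\,\overline b$ gives
\[
\tr D(\overline b)=\tfrac1N\bigl(\Theta_1+\tr D(\overline1)\bigr)(\tr\overline b).
\]
The derivation part $\tfrac1N\sum_i\tr(f_i)(\delta_i)_N$ is a genuine derivation of $\A_N^{\GLN}$. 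The remaining piece is multiplication by the invariant constant $\mu=\tfrac1N\bigl(\sum_i\lambda_i\tr(f_i)+\tr D(\overline1)\bigr)$, acting on the generators $\tr\overline b$.

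The main obstacle is this scalar $\mathrm{id}_\A$-component. A map that multiplies every generator by $\mu$ is not a derivation, so I must either show it is harmless or absorb it. My first attempt will be to extend the $\kk\cdot\mathrm{id}_\A$ summand as the grading-type derivation of $\sss$ that multiplies $\overline b$ by $\mu$ and $\mathds{1}$ by $0$. I will then check directly whether it is compatible with $\tr$, i.e.\ whether $\tr\circ(\text{this derivation})$ kills $\ker\tr$; the relevant relations are the Procesi relations (Cayley--Hamilton identities), which are homogeneous in the $\A$-entries once $\tr(\overline1)=N$ is allowed as a scalar. If that fails in general, I will use $b=1$ together with the transposed Leibniz rule to constrain $\tr D(\overline1)$ against $\sum_i\lambda_i\tr(f_i)$ and try to show $\mu=0$. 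Once the descent of $D$ is settled, the transposed Poisson axioms on $\A_N^{\GLN}$ follow from those on $\sss$ by surjectivity of $\tr$, as explained above.
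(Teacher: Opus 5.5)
Your reduction has a genuine gap. You set out to prove that $\ker\tr$ is a Lie ideal of $(\sss,\{-,-\})$, i.e.\ that $\tr\colon\sss\to\A_N^{\GLN}$ intertwines the brackets on all of $\sss$, including on $\mathds{1}$. That statement is false in general, so neither of your fallbacks can succeed.

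Your own computation shows where it breaks. Put $b=1_\A$ in $\tr\{\overline{1_\A},\overline b\}=\Theta_1(\tr(\overline b))$ and use $\{\overline{1_\A},\overline{1_\A}\}=0$. This gives $\sum_i\lambda_i\tr(f_i)=0$, hence $\mu=\frac1N\tr D(\overline{1_\A})$. On the other hand, any derivation $E$ with $\tr\circ D=E\circ\tr$ would give $\tr D(\overline{1_\A})=E(N)=0$. Nothing in Definition~\ref{Def:trH0P} forces this.

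For a concrete counterexample, let $D$ be the Euler derivation of $\sss$, $D(\overline b)=\overline b$; already $\A=\kk$, $\sss=\kk[\overline{1_\A}]$ will do. Then $\{f,g\}=fD(g)-D(f)g$ is transposed Poisson and vanishes on $\A_\natural\times\A_\natural$. So it is an $H_0$-transposed Poisson structure with $d_a=0$. Yet $\overline{1_\A}-N\mathds{1}\in\ker\tr$, while $\tr\{\overline{1_\A}-N\mathds{1},\overline{1_\A}\}=-N\tr(\overline{1_\A})=-N^2\neq0$.

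The same element defeats your first fallback. The $\tr$-derivation $\overline b\mapsto\mu\,\tr(\overline b)$ takes the value $N\mu$ on $\overline{1_\A}-N\mathds{1}$. The relations in $\ker\tr$ are not homogeneous precisely because $\tr(\overline{1_\A})=N\tr(\mathds{1})$.

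The missing idea is that \eqref{Eq:trace-formula} only prescribes the bracket on the generators $\tr(\overline a)$, $a\in\A$, not on $\tr(\mathds{1})=1$. On those generators the scalar part is harmless because it cancels inside a bracket, not because it descends. Take the derivation part you already isolated, $E:=\frac1N\sum_i\tr(f_i)(\delta_i)_N$ restricted to $\A_N^{\GLN}$. Define $\{x,y\}_N^{\mathrm{tr}}:=xE(y)-E(x)y$, which is transposed Poisson by Example~\ref{Ex:baiDer}. Since $E(\tr(\overline b))=\tr D(\overline b)-\mu\,\tr(\overline b)$, you get
\[
\{\tr(\overline a),\tr(\overline b)\}_N^{\mathrm{tr}}=\tr(\overline a)\,\tr D(\overline b)-\tr D(\overline a)\,\tr(\overline b)=\tr\{\overline a,\overline b\},
\]
because the two $\mu$-terms cancel. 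This is exactly the paper's proof, with its constant $c$ equal to your $\mu$.

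This bracket satisfies $\{1,\tr(\overline b)\}_N^{\mathrm{tr}}=\tr D(\overline b)-\mu\,\tr(\overline b)$ rather than $\tr\{\mathds{1},\overline b\}$. That is another way to see your stronger reading cannot hold. Your descent argument does go through when $\tr D(\overline{1_\A})=0$. This covers the $H_0$ structures of Proposition~\ref{Thm:0-deg-is-H0P}, where $D(\overline{1_\A})=\pi(\delta(1_\A))=0$.
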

\begin{proof}
    Proposition~\ref{Pr:tPA-unit} yields a unique derivation $D\in\Der(\sss)$, $D:=\{\mathds{1}_{\sss},-\}$, with
    \begin{equation} \label{Eq:trace-pf-D}
        \{f,g\}=f\,D(g)-D(f)\,g\qquad(f,g\in\sss).
    \end{equation}
    Suppose a transposed Poisson bracket $\{-,-\}_N^{\mathrm{tr}}$ on $\A_N^{\GLN}$ satisfies~\eqref{Eq:trace-formula}. Proposition~\ref{Pr:tPA-unit} applied to $\A_N^{\GLN}$ writes it as $\{x,y\}_N^{\mathrm{tr}}=xE(y)-E(x)y$ for a unique $E=\{1,-\}_N^{\mathrm{tr}}\in\Der_\kk(\A_N^{\GLN})$. Setting $a=1_\A$\footnote{Throughout the proofs of Theorem~\ref{Pr:H0P-trace-tPA} and Proposition~\ref{Pr:dtPA-N-matches-trace} we write $1_\A$ for the unit of $\A$, to avoid confusion with the units of the other algebras in play, such as $\mathds{1}\in\sss$ and $1\in\A_N^{\GLN}$.} in~\eqref{Eq:trace-formula} we obtain
    \begin{equation} \label{Eq:trace-pf-E-on-trace}
        E\bigl(\tr(\overline b)\bigr)=\tr\bigl(D(\overline b)\bigr)-c\,\tr(\overline b),\qquad c:=\tfrac{1}{N}\tr\bigl(D(\overline{1_\A})\bigr),\qquad b\in\A.
    \end{equation}
    Theorem~\ref{Thm:trace-procesi} shows that $\A_N^{\GLN}$ is generated by $\{\tr(\overline b)\}_{b\in\A}$, so a $\kk$-derivation of $\A_N^{\GLN}$ is determined by its values on these generators. The theorem reduces to producing $E\in\Der_\kk(\A_N^{\GLN})$ satisfying~\eqref{Eq:trace-pf-E-on-trace}; the bracket $xE(y)-E(x)y$ is then transposed Poisson by Example~\ref{Ex:baiDer}, and reversing the substitution above returns~\eqref{Eq:trace-formula}.

    \medskip
    Pick $d_{1_\A}\in(\Der(\A)\oplus\kk\cdot\id_\A)\otimes_\kk\sss$ as in Definition~\ref{Def:trH0P} for $a=1_\A$ the unit of $\A$, and write it as a finite sum
    \[
        d_{1_\A}=\sum_{i\in I}E_i\otimes f_i\;+\;\id_\A\otimes\kappa_{\sss},\qquad E_i\in\Der(\A),\ f_i,\kappa_{\sss}\in\sss.
    \]
    Each $E_i$ lifts to the unique $(E_i)_N\in\Der(\A_N)$ with $(E_i)_N(a_{kl})=(E_i(a))_{kl}$. Set $\kappa:=\tr(\kappa_{\sss})\in\A_N^{\GLN}$ and define
    \[
        \widehat{d_{1_\A}}\colon\A_N\longrightarrow\A_N,\qquad \widehat{d_{1_\A}}(x):=\sum_{i\in I}(E_i)_N(x)\,\tr(f_i)+\kappa\,x,
    \]
    which is $\GLN$-equivariant, so $\widehat{d_{1_\A}}$ restricts to a $\kk$-linear endomorphism of $\A_N^{\GLN}$.
    For each $E_i\in\Der(\A)$, $(E_i)_N(\tr(\overline b))=\sum_{k=1}^N(E_i(b))_{kk}=\tr(\overline{E_i(b)})$, so substituting into the definition of $\widehat{d_{1_\A}}$,
    \[
        \widehat{d_{1_\A}}\bigl(\tr(\overline b)\bigr)=\sum_{i\in I}\tr\bigl(\overline{E_i(b)}\bigr)\tr(f_i)+\kappa\,\tr(\overline b)=\tr\bigl(\pi(d_{1_\A}(b))\bigr)=\tr\bigl(\{\overline{1_\A},\overline b\}\bigr),
    \]
    where the second equality uses $\pi(c\otimes f)=\overline c\cdot f$ from Definition~\ref{Def:trH0P} and the third is~\eqref{Eq:trH0P}. By~\eqref{Eq:trace-pf-D} and the multiplicativity of $\tr$,
    \begin{equation} \label{Eq:trace-pf-d1A-on-trace}
        \widehat{d_{1_\A}}\bigl(\tr(\overline b)\bigr)=N\,\tr\bigl(D(\overline b)\bigr)-\tr\bigl(D(\overline{1_\A})\bigr)\,\tr(\overline b),\qquad b\in\A.
    \end{equation}

    Specialize~\eqref{Eq:trace-pf-d1A-on-trace} to $b=1_\A$: the right-hand side vanishes, so $\widehat{d_{1_\A}}(N)=0$. On the other hand $(E_i)_N(N)=0$ for $E_i\in\Der(\A)$, so by $\kk$-linearity $\widehat{d_{1_\A}}(N)=N\kappa$. Hence $\kappa=0$ and $\widehat{d_{1_\A}}\in\Der_\kk(\A_N)$; by equivariance it restricts to a derivation of $\A_N^{\GLN}$.

    Set $E:=\tfrac{1}{N}\widehat{d_{1_\A}}\big|_{\A_N^{\GLN}}\in\Der_\kk(\A_N^{\GLN})$. Identity~\eqref{Eq:trace-pf-d1A-on-trace} then reads
    \[
        E\bigl(\tr(\overline b)\bigr)=\tr\bigl(D(\overline b)\bigr)-c\,\tr(\overline b),
    \]
    which is~\eqref{Eq:trace-pf-E-on-trace} as desired.
\end{proof}

\subsection{Double transposed Poisson algebras and invariants} \label{ss:dtPA-Inv}

A double transposed Poisson bracket on $\A$ produces a transposed Poisson bracket on the invariant subalgebra $\A_N^{\GLN}$ in two different ways, and we show that they coincide. Throughout, let $\A$ be a unital associative algebra equipped with a double transposed Poisson algebra structure $\dsq{-,-}$ in the sense of Definition~\ref{Def:trDPA}.

First, Proposition~\ref{Pr:dtPA-induces-tPA-N} produces a $\GLN$-equivariant transposed Poisson bracket $[-,-]_N$ on $\A_N$, and Proposition~\ref{Pr:tPA-Red} restricts it to a transposed Poisson bracket
\[
    \{-,-\}_N^{\mathrm{ind}}\colon\A_N^{\GLN}\times\A_N^{\GLN}\longrightarrow\A_N^{\GLN}.
\]

Second, by Proposition~\ref{Thm:0-deg-is-H0P}, the restriction of $\dsq{-,-}$ to $\sss\otimes\sss$ is an $H_0$-transposed Poisson bracket $\{-,-\}$ on $\sss=\O(\A)^{(0)}$. Theorem~\ref{Pr:H0P-trace-tPA} associates with it the transposed Poisson bracket $\{-,-\}_N^{\mathrm{tr}}$
\eqref{Eq:trace-formula}. 

\begin{proposition} \label{Pr:dtPA-N-matches-trace}
    The two transposed Poisson brackets $\{-,-\}_N^{\mathrm{ind}}$ and $\{-,-\}_N^{\mathrm{tr}}$ on $\A_N^{\GLN}$ coincide.
\end{proposition}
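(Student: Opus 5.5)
The plan is to show that both brackets satisfy the defining formula \eqref{Eq:trace-formula} on the trace generators, and then to argue that this formula determines a transposed Poisson bracket on $\A_N^{\GLN}$ uniquely.

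\textbf{Step 1: $\{-,-\}_N^{\mathrm{ind}}$ satisfies the trace formula.} I will compute $\{-,-\}_N^{\mathrm{ind}}$ on generators $\tr(\overline a)$, $\tr(\overline b)$. Take $n=0$ in \eqref{f28}. For $f=f_1\cdots f_m\in\sss=\O(\A)^{(0)}$ and the empty multi-indices, the pairing gives $(f\,|\,1)=\tr(f_1)\cdots\tr(f_m)$, where $1\in\Mat^{\otimes 0}=\kk$. In particular $(\overline a\,|\,1)=\tr(\overline a)$ and $(\mathds{1}\,|\,1)=1$. I will extend $\tr$ multiplicatively to all of $\sss$, as is implicit in the statement of Theorem~\ref{Pr:H0P-trace-tPA}. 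With this convention, $(f\,|\,1)=\tr(f)$ for every $f\in\sss$.

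By Proposition~\ref{Pr:dtPA-induces-tPA-N}, formula \eqref{Eq:induced-bracket-AN} gives
\[
\{\tr(\overline a),\tr(\overline b)\}_N^{\mathrm{ind}}=[(\overline a|1),(\overline b|1)]_N=\big(\dsq{\overline a,\overline b}\,\big|\,1\otimes 1\big)=\tr\big(\{\overline a,\overline b\}\big).
\]
The last equality holds because $\dsq{\overline a,\overline b}$ lies in degree zero and equals the restricted bracket $\{\overline a,\overline b\}$ of Proposition~\ref{Thm:0-deg-is-H0P}. Hence $\{-,-\}_N^{\mathrm{ind}}$ satisfies \eqref{Eq:trace-formula}.

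\textbf{Step 2: uniqueness.} Let $B$ be any transposed Poisson bracket on the unital algebra $\A_N^{\GLN}$ satisfying \eqref{Eq:trace-formula}. By Proposition~\ref{Pr:tPA-unit}, $B(x,y)=xE_B(y)-E_B(x)y$ with $E_B=B(1,-)$ a derivation. Since $1=\tfrac1N\tr(\overline{1_\A})$, we get
\[
E_B(\tr(\overline b))=\tfrac1N\,\tr\big(\{\overline{1_\A},\overline b\}\big),
\]
which depends only on the $H_0$-structure. By Theorem~\ref{Thm:trace-procesi}, the elements $\tr(\overline b)$ generate $\A_N^{\GLN}$. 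A derivation is determined by its values on generators, so $E_B$, and hence $B$, is determined uniquely.

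\textbf{Conclusion.} The bracket $\{-,-\}_N^{\mathrm{tr}}$ satisfies \eqref{Eq:trace-formula} by Theorem~\ref{Pr:H0P-trace-tPA}, and $\{-,-\}_N^{\mathrm{ind}}$ does by Step 1. Both are transposed Poisson brackets on $\A_N^{\GLN}$, so Step 2 gives $\{-,-\}_N^{\mathrm{ind}}=\{-,-\}_N^{\mathrm{tr}}$.

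The main point to check carefully is the degree-zero bookkeeping in Step 1: that \eqref{f28} with $n=0$ and the multiplicative extension of $\tr$ make the pairing $(-\,|\,1)$ an algebra map $\sss\to\A_N^{\GLN}$ sending $\mathds{1}\mapsto 1$. This is where the units $\mathds{1}$ versus $\overline{1_\A}$ (which pairs to $N$) must be kept apart. I would also verify that the tensor $1\otimes 1\in\Mat^{\otimes 0}\otimes\Mat^{\otimes 0}$ is the scalar $1$, so that no permutation or normalization factor appears.
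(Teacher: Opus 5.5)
Your proposal is correct and follows essentially the same route as the paper. You first compute $\{\tr(\overline a),\tr(\overline b)\}_N^{\mathrm{ind}}=(\dsq{\overline a,\overline b}\,|\,1)=\tr(\{\overline a,\overline b\})$ through the degree-zero pairing, and then use Proposition~\ref{Pr:tPA-unit}, the identity $1=\tfrac1N\tr(\overline{1_\A})$ and Theorem~\ref{Thm:trace-procesi} to show that the derivations $\{1,-\}$ of the two brackets agree on generators. Framing the second half as a uniqueness statement for brackets satisfying \eqref{Eq:trace-formula} is only a cosmetic repackaging of the paper's direct comparison.
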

\begin{proof}
    For $a\in\A$, the class $\overline a$ lies in $\sss=\O(\A)^{(0)}$ and has degree zero in $\O(\A)$. The pairing $(\overline a\,|\,1)$ of Section~\ref{Sec:Gen} with the unique element $1\in\Mat^{\otimes 0}=\kk$ is computed from~\eqref{f28} with empty index tuples $\mathbf{i},\mathbf{j}$, leaving only the trace factor:
    \[
        (\overline a\,|\,1)=\tr(\overline a)\in\A_N^{\GLN}.
    \]
    Formula~\eqref{Eq:induced-bracket-AN} for the induced bracket, specialized to $(\alpha,X)=(\overline a,1)$ and $(\beta,Y)=(\overline b,1)$, becomes
    \[
        \{\tr(\overline a),\tr(\overline b)\}_N^{\mathrm{ind}}=\bigl(\dsq{\overline a,\overline b}\,\big|\,1\bigr).
    \]
    Since $\dsq{\overline a,\overline b}$ lies in $\sss$, Proposition~\ref{Thm:0-deg-is-H0P} identifies it with $\{\overline a,\overline b\}$, and the pairing with $1$ of a degree-zero element is its trace; together with the defining formula~\eqref{Eq:trace-formula} of $\{-,-\}_N^{\mathrm{tr}}$, this gives
    \[
        \{\tr(\overline a),\tr(\overline b)\}_N^{\mathrm{ind}}=\tr\bigl(\{\overline a,\overline b\}\bigr)=\{\tr(\overline a),\tr(\overline b)\}_N^{\mathrm{tr}}.
    \]

    By Proposition~\ref{Pr:tPA-unit} applied to the unital commutative $\kk$-algebra $\A_N^{\GLN}$, each of $\{-,-\}_N^{\mathrm{ind}}$ and $\{-,-\}_N^{\mathrm{tr}}$ has the form $\{x,y\}=x\,E(y)-E(x)\,y$ for a unique $\kk$-derivation $E\in\Der_\kk(\A_N^{\GLN})$, recovered from the bracket by pairing with the multiplicative unit $1\in\A_N^{\GLN}$ via $E(y)=\{1,y\}$. By Theorem~\ref{Thm:trace-procesi}, $\A_N^{\GLN}$ is generated as a $\kk$-algebra by $\{\tr(\overline a)\}_{a\in\A}$. The previous paragraph shows that the two brackets agree on these generators; since $1=(1/N)\tr(\overline{1_\A})$ in $\A_N^{\GLN}$ and the bracket is $\kk$-bilinear, the associated derivations $E(y)=\{1,y\}$ agree on the generators as well. A $\kk$-derivation is determined by its values on a generating set, so the two derivations coincide, and so do the brackets.
\end{proof}